\newcommand{\N}{ \mathbb{N} }
\newcommand{\Z}{ \mathbb{Z} }
\newcommand{\R}{ \mathbb{R} }
\newcommand{\trunc}[1]{ {\lfloor #1 \rfloor} }
\newcommand{\wh}[1]{ \widehat{ #1 } }
\newcommand{\wt}[1]{ \widetilde{ #1 } }
\newcommand{\argmax}{\operatorname*{argmax}}
\newcommand{\calA}{\mathcal{A}}
\newcommand{\calD}{\mathcal{D}}
\newcommand{\calF}{\mathcal{F}}
\newcommand{\calG}{\mathcal{G}}
\newcommand{\calJ}{\mathcal{J}}
\newcommand{\calS}{\mathcal{S}}
\newcommand{\calU}{\mathcal{U}}
\newcommand{\calV}{\mathcal{V}}
\newcommand{\eins}{{\bm 1}}
\newcommand{\matA}{{\bm A}}
\newcommand{\matB}{{\bm B}}
\newcommand{\matD}{{\bm D}}
\newcommand{\matX}{{\bm X}}
\newcommand{\matM}{{\bm M}}
\newcommand{\matP}{{\bm P}}
\newcommand{\matS}{{\bm S}}
\newcommand{\matV}{{\bm V}}
\newcommand{\vecnull}{{\bm 0}}
\newcommand{\veca}{{\bm a}}
\newcommand{\vecb}{{\bm b}}
\newcommand{\vecB}{{\bm B}}
\newcommand{\vecc}{{\bm c}}
\newcommand{\vecC}{{\bm C}}
\newcommand{\vecD}{{\bm D}}
\newcommand{\vece}{{\bm e}}
\newcommand{\vecG}{{\bm G}}
\newcommand{\vecl}{{\bm l}}
\newcommand{\vecr}{{\bm r}}
\newcommand{\vecM}{{\bm M}}
\newcommand{\vecu}{{\bm u}}
\newcommand{\vecR}{{\bm R}}
\newcommand{\vecS}{{\bm S}}
\newcommand{\vecT}{{\bm T}}
\newcommand{\vecv}{{\bm v}}
\newcommand{\vecw}{{\bm w}}
\newcommand{\vecx}{{\bm x}}
\newcommand{\vecy}{{\bm y}}
\newcommand{\vecY}{{\bm Y}}
\newcommand{\vecZ}{{\bm Z}}
\newcommand{\vecz}{{\bm z}}
\newcommand{\vecU}{{\bm U}}
\newcommand{\bfxi}{\bm \xi}
\newcommand{\bfmu}{\bm \mu}
\newcommand{\bfeps}{\bm \epsilon}
\newcommand{\bfSigma}{\bm\Sigma}
\newcommand{\EE}{{\mathbb{E}}}
\newcommand{\PP}{{\mathbb{P}}}
\newcommand{\Var}{{\mathbb{V}}}
\newcommand{\Cov}{{\mathbb{C}}}
\newcommand{\matid}{I}
\newcommand{\cadlag}{c\`adl\`ag }
\newcommand{\aarr}{  \mathfrak{a}  }
\newcommand{\barr}{  \mathfrak{b} }
\newcommand{\carr}{  \mathfrak{c} }
\newcommand{\ava} { \vecv_n^\top \veca_n }
\newcommand{\avb} { \vecv_n^\top \vecb_n }
\newcommand{\avc} { \vecv_n^\top \vecc_n }
\newcommand{\awa} { \vecw_n^\top \veca_n }
\newcommand{\awb} { \vecw_n^\top \vecb_n }
\newcommand{\awc} { \vecw_n^\top \vecc_n }
\newtheorem{theorem}{Theorem}
\newtheorem{example}{Example}
\newtheorem{remark}{Remark}
\newtheorem{lemma}{Lemma}
\begin{document}

\title[Change-Points in High-Dimensional Covariance Matrices]{\Large Testing and Estimating Change-Points in the Covariance Matrix of a High-Dimensional Time Series}

\author{{\large \sc Ansgar Steland}}

\address{RWTH Aachen University \\ Institute of Statistics \\ W\"ullnerstr. 3 \\ D-52056 Aachen \\ Germany} 

\email{steland@stochastik.rwth-aachen.de }

\date{December 2019}

\maketitle

\begin{abstract}
	This paper studies methods for testing and estimating change-points in the covariance structure of a high-dimensional linear time series. The assumed framework allows for a large class of multivariate linear processes (including vector autoregressive moving average (VARMA) models) of growing dimension  and spiked covariance models. The approach uses bilinear forms of the centered or non-centered sample variance-covariance matrix. Change-point testing and estimation are based on  maximally selected weighted cumulated sum (CUSUM) statistics. Large sample approximations under a change-point regime are provided including a multivariate CUSUM transform of increasing dimension. For the unknown asymptotic variance and covariance parameters associated to (pairs of) CUSUM statistics we propose consistent estimators. Based on  weak laws of large numbers for their sequential versions, we also consider stopped sample estimation where observations until the estimated change-point are used. Finite sample properties of the procedures are investigated by simulations and their application is illustrated by analyzing a real data set from environmetrics. \\ \ \\
\end{abstract}

\keywords{{\bf Keywords:} Big data, Change-point, CUSUM transform, Data science, High-dimensional statistics, Projection, Spatial statistics, Spiked covariance, Strong approximation, VARMA processes}

\subjclass{{\bf MSC:} 62E20, 62M10,  62H99}

\section{Introduction}

High-dimensional big data arise in diverse fields such as environmetrics, engineering and finance. From a data science viewpoint statistical methods and tools are needed, which allow to answer questions posed to the data, and mathematical results justifying their validity under mild regularity conditions. The latter especially requires asymptotics for the case that the data dimension is large in comparison to the sample size. In this paper, a high-dimensional time series is model is studied and all asymptotic results allow for increasing dimension without any constraint relative to the sample size. The proposed procedures are investigate by simulations and applied to real data from environmetrics.

We study methods for the detection of a change-point in a high-dimensional covariance matrix and estimation of its location based on a time series. The proposed procedures investigate estimated bilinear forms of the covariance matrix, in order to test for the presence of a change-point as well as to estimate its location. The bilinear forms use weighting vectors with finite $ \ell_1$- resp. $ \ell_2 $-norms which may even grow slowly as the sample size increases. 
This approach is natural from a mathematical point of view and has many applications in diverse areas: Analysis of projections onto subspaces spanned by (sparse) principal directions, infering the dependence structure of high-dimensional sensor data, e.g., from environmental monitoring, testing for a change of the autocovariance function of a univariate series or financial portfolio analysis, to mention a few. These problems have in common that the dimension $d$ can be large and may be even larger than the sample size $n$. The results of this paper allow for this case and do not impose a condition on the growth of the dimension. Multivariate versions of CUSUM statistics are also considered.

The problem to detect changes in a sequence of covariance matrices has been studied by several authors and recently gained increasing interest, although the literature is still somewhat sparse. Going beyond the binary segmentation approach, \cite{ChoFryzlewicz2015} propose a sparsified segmentation procedure where coordinate-wise CUSUM statistics are thresholded to segment the second-order structure. But these results do not cover significance testing. To test for a covariance change in a time series, \cite{GaleanoPena2007}, who also give some historical references, consider CUSUM and likelihood ratio statistics for fixed dimension $d$ assuming a parametric linear process with Gaussian errors. Their CUSUM statistics, however, require knowledge of the covariance matrix of the innovations when no change is present. \cite{BerkesGombayHorvath2009} studied unweighted and weighted CUSUM change-point tests for a linear process to detect a change in the autocovariance function, but only for a fixed lag. Further, their theoretical results are restricted to the null hypothesis of no change. Kernel methods for this problem have been studied by \cite{Steland2005} and \cite{LiZhao2013}. \cite{Aue2009} studied break detection in vector time series for fixed dimension and provide an approximation of the limiting distribution of their test statistic, an unweighted CUSUM, if $d$ is large.  
Contrary, the approach studied in this paper allows for growing dimension $d$ without any constraint such as $ d/n \to y \in (0,1) $, as typically imposed in random matrix theory, $ d = O( h(n) ) $ for some increasing function $h$, e.g., exponential growth as in \cite{Avanesov2018} (which is, however, constrained to i.i.d. samples), or (again for i.i.d. samples) asymptotics for the eigenstructure under the assumption  $ d/(n \lambda_j) = O(1) $ for the spiked eigenvalues $ \lambda_j $,
 \cite{WangFan2017}, which allows for $ d/n \to \infty $ provided the eigenvalues diverge.

It is shown that, for the imposed high-dimensional time series model, (weighted) CUSUM statistics associated to the sample covariance matrix can be approximated by (weighted) Gaussian bridge processes. Under the null hypothesis this follows from \cite{StelandSachs2017} and one can also consider an increasing number of such statistics by virtue of the results in \cite{StelandSachs2018}. The asymptotics under a change-point regime, however, is more involved and is provided in this paper. Both single CUSUM statistics and multivariate CUSUM transforms corresponding to a set of projection vectors are studied. The dimension of the time series as well as  the dimension of the multivariate CUSUM transform is allowed to grow with the sample size in an unconstrained way. The results of this paper extend \cite{StelandSachs2017, StelandSachs2018}, especially by studying weighted CUSUMs, providing refined martingale approximations and relaxing the conditions on the projection vectors.

Further, consistent estimation of the unknown variance and covariance parameters is studied without the need to estimate eigenstructures. As well known, this essentially would require conditions under which the covariance matrix can be estimated consistently in the Frobenius norm, which needs the restrictive condition $ d=o(n) $ on the dimension according to the results of \cite{LedoitWolf2004} and \cite{Sancetta2008}, or requires to assume appropriately constrained models. Estimators for the asymptotic variance and covariance parameters associated to a single resp. a set of CUSUM statistics have already been studied under the no-change hypothesis in  \cite{StelandSachs2017} and \cite{StelandSachs2018}. These estimators are now studied under a change-point model, generalized to deal with two pairs of projection vectors describing the asymptotic covariance between pairs of (weighted) CUSUMs and studied  from a sequential viewpoint which allows us to to propose  stopped-sample estimators using the given sample until the estimated change point. This is achieved by proving a uniform law of large numbers for the sequential estimators.  

Closely related to the problem of testing for a change-point is the task of estimating its location. It is shown hat the change-point estimator naturally associated to the weighted or unweighted CUSUM statistic is consistent. As a consequence, the well known iterative binary segmentation algorithm, dating back to \cite{Vostrikova1981}, can be used to locate multiple change points. 

The organization of the paper is as follows. Section~\ref{Sec:ModelAssumptions} introduces the framework, discusses several models appearing as special cases, introduces the proposed methods and discusses how to select the projection vectors.  The asymptotic results are provided in Section~\ref{Sec:Asymptotics}. They cover
strong and weak approximations for the (weighted) partial sums of the bilinear forms and for associated CUSUMs as well as consistency theorems for the proposed estimators of unknowns. Section~\ref{Sec:ChangePoint} considers the problem to estimate the change-point. Simulations are presented in Section~\ref{Sec:Simulations}. In Section~\ref{Sec:Example} the methods are illustrated by analyzing the dependence structure of ozone measurements from $444$ monitors across the United States over a five-year-period. Main proofs are given in Section~\ref{Sec:Proofs}, whereas additional material is deferred to an appendix. 

\section{Model, assumptions and procedures}
\label{Sec:ModelAssumptions}

\subsection{Notation}

 Throughout the paper $ a_{nk} \stackrel{n,k}{\ll} b_{nk} $ for two arrays of real numbers means that there exists a constant $ C < \infty $, such that $ a_{nk} \le C b_{nk} $ for all $n, k $. $ (\Omega, \calA, \PP ) $ denotes the underlying probability space  on which the vector time series is defined. $ \EE $ denotes expectation (w.r.t. $\PP $), $\Var$  the variance and $ \Cov $ the covariance. For a logical expression $E$ we let $  \eins( E ) $ denote the associated indicator function. If $A$ is a set, then $ \eins_A $ is the usual characteristic function, whereas $ \eins_n $ for $n \in \N $ denotes the $n$-vector with entries $1$ and $ \vecnull_n $ is the null $n$-vector. $ \| \cdot \|_2 $ is the vector-2 norm, $ \| \cdot \|_{\ell_p} $, $ p \in \N $, the $ \ell_p $-norm for sequences and $ \| \cdot \|_\infty $ the maximum norm for sequences or vectors. $ \| \cdot \|_{op} $ denotes the semi norm $ \| T \|_{op} = \sup_{f: \| f \| = 1} | (f,Tf) \| $ for a linear operator $T $ on a Hilbert space with inner product $ (\cdot, \cdot) $. $ X_n \Rightarrow X $ denotes weak convergence of a sequence of \cadlag processes in the Skorohod space $ D[0,1] $ equipped with the usual metric. 

\subsection{Time series model and assumptions}

Let us assume that the coordinates of the vector time series $ \vecY_{ni} = ( Y_{ni}^{(1)}, \ldots, Y_{ni}^{(d_n)} )^\top $ are given by
\begin{equation}
	\label{ModelYCoord}
	Y_{ni}^{(\nu)} = Y_{ni}^{(\nu)}( \aarr ) = \sum_{j=0}^\infty a_{nj}^{(\nu)} \epsilon_{n,i-j}, \qquad i \in \{ 1, \ldots, n \}, \nu \in \{ 1, \ldots, d_n \}, n \ge 1, 
\end{equation}
for coefficients $ \aarr = \{ a_{nj}^{(\nu)} : j \ge 0, n \ge 1 \} $ and  independent zero mean errors $ \{ \epsilon_{ni} : i \in \Z, n \ge 1 \} $ satisfying the following two assumptions.

\textbf{Assumption (D):} An array $ \aarr = \{ a_{nj}^{(\nu)} : j \ge 0, \nu \in \{1, \ldots, d_n\}, n \ge 1 \} $ of real numbers satisfies the decay condition (D), if for some $ \theta \in (0,1/2) $
\begin{equation}
	\label{AssDecayCond}
	\sup_{n \ge 1} \max_{1 \le \nu \le d_n} | a_{nj}^{(\nu)} | \stackrel{j}{\ll} \min(1,j)^{-3/4 - \theta/2}.
\end{equation}

\textbf{Assumption (E):} $ \{ \epsilon_{nk} : k \in \Z,  n \in \N \} $, is an array of independent mean zero random variables with $ \sup_{n \ge 1} \sup_{k \in \Z} \EE | \epsilon_{nk} |^{4+\delta} < \infty $ and moment arrays $
\sigma_{nk}^2 = \EE( \epsilon_{nk}^2 ),  \gamma_{nk} = \EE( \epsilon_{nk}^3 ),
$  $ 1 \le k \le n $, $ n \ge 1 $, 
satisfying
\[
\frac{1}{\ell} \sum_{i=1}^\ell i | \sigma_{ni}^2 - s_{n1}^2 | = O( \ell^{-\beta} ), \qquad  \frac{1}{\ell} \sum_{i=1}^\ell i | \gamma_{ni} - \gamma_{n}| = O( \ell^{-\beta} ),
\] 
for some $ \beta > 1 + \theta $ and sequences $ \{ s_{n1} \} $ and $ \{ \gamma_n \} $.

The assumptions on $ \sigma_{ni}^2 $ and $ \gamma_{ni} $ allow for a certain degree of inhomogeneity of the second and third moments. Especially, under the change-point model described below, where the coefficients of the linear processes change
after the change-point $ \tau = \tau_n $,  these assumptions cover weak effects of the change on the second resp. third moments. An example satisfying the conditions is given by
\[
\sigma_{ni}^2 = s_{n1}^2 + \frac{\kappa_i}{i} \Delta_{\sigma^2,ni}, 
\quad \Delta_{\sigma^2,ni} = \eins( i \le \tau ) \sigma_0^2 + 
\eins( i > \tau ) \sigma_1^2,
\] 
for  two positive constants $ \sigma_0^2 \not= \sigma_1^2 $ and $ \kappa_i \in \R $, $i \ge 1 $, with $ \kappa_i = o(i) $ and $ \sum_{i=1}^\ell  | \kappa_i | \sim l^{1+\beta} $.

\subsection{Spiked covariance model}

The spiked covariance model is a common framework to study estimation of the eigenstructure for high-dimensional data. For  $ r \in \N $ let $ \lambda_{1} > \cdots > \lambda_{r} > 0 $ and let $ \vecu_{nj} = ( u_{nj}^{(\nu)} )_{\nu=1}^{d} \in \R^{d} $, $ j \in \{ 1, \dots, r \} $, be orthonormal vectors with $ \| \vecu_{nj} \|_{\ell_1} \le C $ for $ j \in \{ 1, \ldots, r \} $. Assume that 
\begin{equation}
	\label{SpikedCovModel}
	\bfSigma_n = \sum_{j=1}^r \lambda_{j} \vecu_{nj} \vecu_{nj}^\top + \sigma^2 \matid_{d_n}.
\end{equation}
The $r$ leading eigenvalues of $ \bfSigma_n $ under model (\ref{SpikedCovModel}) are $ \lambda_{j} + \sigma^2 $, $ j \in  \{1, \ldots, r\} $, and represent spikes in the spectrum, which is otherwise flat and given by $ \sigma^2 $. The assumption that the eigenvectors are $ \ell_1 $-bounded is common in high-dimensional statistics, especially when assuming a spiked covariance model: \cite{JohnstoneLu2009} have shown that principal component analysis (PCA) generates inconsistent estimates of the leading eigenvectors if $ d/n \to y \in (0,1) $, which motivated developments on sparse PCA.  Minimax bounds for sparse PCA have been studied by \cite{BirnbaumJohnstonNadlerPaul2013} under $ \ell_q $-constraints on the eigenvectors for $ 0 < q < 2 $. For example, the simple diagonal thresholding estimator $ \wh{\vecu}_{nj}^{th} $  of the $j$th leading eigenvector $ \vecu_{nj} $ of \cite{JohnstoneLu2009} satisfies $ \EE \| \wh{\vecu}_{nj}^{th} -  \wh{\vecu}_{nj}^{th}{}^\top   \vecu_{nj} \vecu_{nj} \|_2^2 = O( n^{-1/4} ) $ and the iterated version of \cite{Ma2011} attains the optimal rate $ O( n^{(1-q/2)}  ) $, see also \cite{PJ2007}.  An $ \ell_1 $ sparseness assumption on the eigenvectors is weaker than the (joint) $k$-sparseness condition on the row support of matrix of eigenvectors imposed in \cite{CaiMaWu2015}, who study optimal estimation under the spectral norm. 

Model (\ref{SpikedCovModel}) can be described in terms of (\ref{ModelYCoord}):  Let $ c_{n,r-1+\nu}^{(\nu)} = \sigma^2 $, $ c_{n,j-1}^{(\nu)} = \lambda_{j}^{1/2} u_{nj}^{(\nu)} $, $ j \in  \{1, \ldots, r  \} $,  and $ c_{nj}^{(\nu)} = 0 $, $ j > r+d $,  $ \nu \in  \{1, \dots, d\} $. Then for $ \epsilon_t $ i.i.d $\mathcal{N}(0,1)$ the MA($r+d-1$) series
$
Y_{nt}^{(\nu)} = \sum_{j=0}^{r-1} c_{nj}^{(\nu)} \epsilon_{t-j} + \sigma \epsilon_{t-r-\nu},  \nu \in  \{1, \dots, d\}, 
$
have the covariance matrix (\ref{SpikedCovModel}). The decay condition (D) follows from  $ \sup_{1 \le n} \max_{1 \le \nu \le r} |  u_{nj}^{(\nu)} | \le \sup_{n \ge 1} \| \vecu_{nj} \|_{\ell_1} $. 

We may conclude that our methodology covers the above spiked covariance under which sparse PCA provides consistent estimates of the leading eigenvectors, which are an attractive choice for the projection vectors on which the proposed change-point procedures are based on. The literature on such consistency results is, however, not yet matured and typically assumes i.i.d. data vectors, whereas the framework studied here considers time series.

\subsection{Multivariate linear time series and VARMA processes}

The above linear process framework is general enough to host classes of multivariate linear processes and vector autoregressive models with respect to a $q$-variate noise process, $ q \in \N $. These processes are usually studied for a sequence of innovations, but since our constructions work for arrays, we consider this setting.

{\em Multivariate linear processes:} Let $ 0 = r_1 \le r_2 \le \cdots \le r_q $ be integers and define the $q$-variate innovations
\[
  \bfeps_{ni} = ( \epsilon_{n,i-r_1}, \ldots, \epsilon_{n,i-r_q} )^\top, \qquad i \ge 1, n \ge 1,
\]
based on $ \{ \epsilon_{ni} : i \ge 1, n \ge 1 \} $. If $ \epsilon_{ni} $ have homogeneous variances, then $ \EE( \bfeps_{n0} \bfeps_{nk}^\top ) \not= \vecnull $ iff. $ k \in \{ r_j - r_i : 1 \le i, j \le q  \} $, $ k \not= 0 $, such that for large enough $ r_j $, $ j \ge 2 $, the innovations are arbitrarily close to white noise.
Let $ \matB_{nj} = ( \vecb_{nj,1}, \ldots, \vecb_{nj,d_n} )^\top $, be $ (d_n \times q) $-dimensional matrices with row vectors $ \vecb_{nj,\nu} = ( b_{nj}^{(\nu,1)}, \ldots, b_{nj}^{(\nu,q)} )^\top $, $ \nu \in \{1, \ldots, d_n\} $, for $ j \ge 0 $. Then the $ d_n $-dimensional linear process 
\[
  \vecZ_{ni} = \sum_{j=0}^\infty \matB_{nj} \bfeps_{n, i-j} 
\]
has coordinates 
$
  Z_{ni}^{(\nu)} = \sum_{j=0}^\infty \sum_{\ell=1}^q b_{nj}^{(\nu,\ell)} \epsilon_{n,i-r_\ell-j}, 
$
which attain the representation
\begin{equation}
\label{MultivLinProc}
    Z_{ni}^{(\nu)} = \sum_{k=0}^\infty \left( \sum_{\ell=1}^q \eins( k \ge r_\ell ) b_{n,k-r_\ell}^{(\nu,\ell)}  \right) \epsilon_{n,i-k},
\end{equation}
$ \nu \in  \{1, \ldots, d_n\} $. If we assume that the elements $ b_{nj}^{(\nu,\ell)}  $ of the coefficient matrices $ \matB_{nj} $ satisfy the decay condition 
\[
  \max_{\nu \ge 1} \left|  b_{nj}^{(\nu,\ell)} \right| \stackrel{n}{\ll} (j+r_\ell)^{-3/4-\theta/2},
\]
then  the coefficients $ c_{nk}^{(Z,\nu)} =  \sum_{\ell=1}^q \eins( k \ge r_\ell ) b_{n,k-r_\ell}^{(\nu,\ell)}  $ of the series (\ref{MultivLinProc})  satisfy $ \sup_{n \ge 1} \max_{\nu \ge 1} | c_{nk}^{(Z,\nu)} | \ll k^{-3/4-\theta/2}  $, i.e.,  Assumption (D) holds. In this construction the lags $ r_1, \ldots, r_q $ used to define the $ q $-variate innovation process may depend on $ n $. 

We may go beyond the above near white noise $q$-variate innovations and consider $ d_n $-dimensional linear processes with mean zero innovations $ \vece_{ni} $, $ i \ge 1 $, with a  covariance matrix close to some $ \matV > 0 $: Let 
\begin{equation}
\label{MultivLinProc2}
\vecZ_{ni} = \sum_{j=0}^\infty \matB_{nj} \matP \vece_{n,i-j}, \qquad \vece_{ni} = \matV^{1/2} \bfeps_{ni}, \qquad i \ge 1, n \in \N, 
\end{equation}
where 
\begin{equation}
\label{DefInnovationsMulti}
\bfeps_{ni} = ( \epsilon_{n,i-r_1}, \ldots, \epsilon_{n,i-r_{d_n}} )^\top,  \qquad i \ge 1, n \in \N,
\end{equation} 
for $ 0 = r_1 < \cdots < r_{d_n} $,
$ \matP $ is a full rank $ q \times d_n $ matrix and $ \matB_{nj} $ are $ d_n \times q $ coefficient matrices as above, i.e., with elements satisfying the decay condition. $ \matP $ is used to reduce the dimensionality. Let
$
\matP \matV^{1/2} = \sum_{i=1}^q \pi_{ni} \vecl_{ni} \vecr_{ni}^\top
$
be the singular value decomposition of $ \matP \matV^{1/2}  $ with singular values $ \pi_{ni} $, left singular vectors $ \vecl_{ni} \in \R^q $ and right singular vectors $ \vecr_{ni} = ( r_{ni1}, \ldots, r_{nid_n} )^\top \in \R^{d} $ satisfying $ \| \vecl_{ni} \|_{\ell_2} = \| \vecr_{ni} \|_{\ell_2} = 1 $, $ i \in \{1, \ldots, d_n\} $, $ n \ge 1 $.  Then
$
\matB_{nj} \matP \matV^{1/2} = \sum_{i=1}^q \pi_{ni} \matB_{nj} \vecl_{ni} \vecr_{ni}^\top,
$
and the element at position $ (\nu, \ell) $ of the latter matrix is given by 
$
\sum_{i=1}^q \pi_{ni} \vecb_{nj,\nu}^\top \vecl_{ni} r_{ni \ell}  $ which is $ \stackrel{\nu}{\ll} j^{-3/4-\theta/2} $ if the eigenvalues and eigenvectors are bounded.
Therefore, the class of processes (\ref{MultivLinProc2}) is a special case of (\ref{ModelYCoord}). 

The case $ q = q_n \to \infty $, especially $ q = d_n $ leading to the usual definition of a $d_n$-dimensional linear process, can be allowed for when imposing the  conditions
\begin{equation}
\label{StrongerDecayConstruction}
   \max_{\nu, \mu \ge 1} \left|  b_{nj}^{(\nu,\mu)} \right| \stackrel{n}{\ll} (j+2 r_\ell)^{-3/2 -\varpi -\theta} (\nu \mu)^{\varpi(-3/2-\theta)}
  \qquad \text{and} \qquad \sup_{n \ge 1} \sum_{\ell=1}^\infty r_\ell^{-3/4-\theta/2} < \infty
\end{equation}
with $ \varpi = 0 $ and assuming that the operators $ \matB_{nj} \matP \matV^{1/2} $, $ n \ge 1 $, are trace class operators in the sense that $ \sum_i |\pi_{ni}| = O(1) $, with eigenvectors satisfying $ \| \vecl_{ni} \|_{\ell_1}, \| \vecr_{ni} \|_{\ell_1} \stackrel{n,i}{\ll} 1  $. For $ q = d_n $ we let $ \matP = \matid $ such that $ \vecl_{ni} = \vecr_{ni} $ are the eigenvectors and $ \pi_{ni} $ the eigenvalues of $ \matV $. 
Then 
$
 \sup_{n \ge 1} \max_{\nu \ge 1} \left| c_{nj}^{(Z,\nu)} \right| \ll j^{-3/4-\theta/2} \sum_{\ell=1}^\infty r_\ell^{-3/4-\theta/2}  \ll j^{-3/4-\theta/2} 
$ 
verifying (D), as shown in the appendix. The $ \ell_1 $ constraint on the eigenvectors can be omitted when imposing the stronger condition $ \sup_{\nu \ge 1} \sum_{j=1}^{d_n} | b_{n,k-r_\ell}^{(\nu, j)} |^2 \stackrel{n}{\ll} (k+r_\ell)^{-3-\theta} $ on the coefficient matrices. For details see the appendix.

{\em VARMA Models:} Let us consider a $d_n$-dimensional zero mean VARMA($p,r$) process  
\[
  \vecY_{ni} = \matA_{n1} \vecY_{n,i-1} + \ldots + \matA_{np} \vecY_{n,i-p} + \matM_{n1} \bfeps_{n,i-1} + \ldots + \matM_{nr} \bfeps_{n,i-r} + \bfeps_{ni},
\]
with colored $d_n$-variate innovations as in (\ref{MultivLinProc2}). $ \matA_{n1}, \ldots, \matA_{np} $ and $ \matM_{n1}, \ldots, \matM_{nr} $ are $(d_n \times d_n) $ coefficient matrices. Let us assume that each of these coefficient matrices  satisfies
(\ref{StrongerDecayConstruction}) with $ \varpi = 1 $
for some $ \delta > 0$, when denoting its elements by $ b_{nj}^{(\nu,\ell)} $, $ 1 \le \nu, \ell \le d_n $.
Recall that the process is stable, if $ \text{det}( \matid_{d_n} - \matA_{n1} z - \ldots - \matA_{np} z^p ) \not= 0 $ for $ |z| \le 1 $. Then the operator $ \matA(L) = \matid_{d_n} - \sum_{j=1}^p \matA_{nj} L^j $, where $L$ denotes the lag operator, is invertible, the coefficient matrices, $ \matD_{nj} $, of $ \boldsymbol{\Psi}(L) = \matA(L)^{-1} $ are absolutely summable, and one obtains the MA representation 
$
	\vecY_{ni} = \left(\sum_{j=0}^\infty \matD_{nj} L^j \right)\left( \sum_{k=1}^r \matM_{nk} L^k \right) \bfeps_{n,i-j} = \sum_{j=0}^\infty \boldsymbol{\Phi}_{nj} \bfeps_{n,i-j}.
$  
As well known, the coefficient matrices, $ \boldsymbol{\Phi}_{nj}  $,  can be calculated using the recursion
$
  \boldsymbol{\Phi}_{n0} = \matid_{d_n},  \boldsymbol{\Phi}_{nj} = \matM_{nj} + \sum_{k=1}^j \matA_{nk} \boldsymbol{\Phi}_{n,j-k},  j \ge 1,
$
where $ \matM_{nj} = \vecnull $ for $ j > q $. Using these formulas one can show that the coefficient matrices, $ \boldsymbol{\Phi}_{nj}  $, of the MA representation satisfy (\ref{StrongerDecayConstruction}) when denoting its elements by $ b_{nj}^{(\nu,\ell)} $, and therefore the VARMA coordinate processes $ Y_{ni}^{(\nu)} $, $ 1 \le \nu \le d_n $, with innovations (\ref{DefInnovationsMulti}) satisfy the decay condition (D). 

Another interesting class of time series to be studied in future work are factor models, which are of substantial interest in econometrics. For detection of changes resp. breaks we refer to \cite{BreitungEickmeier2011}, \cite{HanInoue2015} and \cite{HorvathRice2019}, amongst others.

\subsection{Change-Point Model and Procedures}

The change-point model studied in this paper considers a change of the coefficients defining the linear processes. Nevertheless, all procedures neither require their knowledge nor their estimation. 
So let $ \barr = \{ b_{nj}^{(\nu)} : j \ge 0, \nu \in \{1, \ldots, d_n \}, n \ge 1  \} $ and $ \carr = \{ c_{nj}^{(\nu)} : j \ge 0, \nu \in \{ 1, \ldots, d_n\}, n \ge 1  \} $ be two different coefficient arrays satisfying the decay assumption and put
\begin{align*}
\bfSigma_{n0} &= \bfSigma_{n}( \barr ) = \Var( \vecY_{n}( \barr ) ), \qquad
\bfSigma_{n1} = \bfSigma_{n}( \carr ) = \Var( \vecY_{n}( \carr ) ).
\end{align*}
It is further assumed that $ \barr $ and $ \carr $ are such that 
\begin{equation}
\label{ChangeCond1}
\barr \not= \carr  \Rightarrow \bfSigma_{n0} \not= \bfSigma_{n1}, \qquad n \in \N.
\end{equation}

We will study CUSUM type procedures based on quadratic and bilinear forms of sample analogs of those variance-covariance matrices, in order to detect a change from $  	\bfSigma_{n0}  $ to $ \bfSigma_{n1}  $.
Let $ \mathcal{V}_n = \{ (\vecx_n, \vecy_n) \in \R^{d_n} \times \R^{d_n} :  \vecx_n^\top \bfSigma_{n0} \vecy_n \not= \vecx_n^\top \bfSigma_{n1} \vecy_n \}. $ Assumption (\ref{ChangeCond1}) ensures that $ \mathcal{V}_n \not= \emptyset $.

The change-point model for the high-dimensional time series is now as follows. For some change-point $ \tau \in \{ 1, \ldots, n \} $ it holds
\begin{equation}
\label{CPM}
\vecY_{ni} = \vecY_{ni}( \barr ) \eins( i \le \tau ) + \vecY_{ni}( \carr ) \eins( i > \tau ),
\qquad 1 \le i \le n, 
\end{equation}
with underlying error terms $ \epsilon_{ni} $, $ 1 \le i \le n $,  $ n \ge 1 $, satisfying Assumption (E). Our results on estimation of $ \tau $, however, assume that the change occurs after a certain fraction of the sample by requiring that
\begin{equation}
\label{CPAssumption}
\tau = \trunc{n \vartheta},
\end{equation}
for some $ \vartheta \in (0,1) $. We are interested in testing the change-point problem
\[
H_0: \tau = n \qquad \text{versus} \qquad H_1 : \tau < n,
\]
which implies a change in the second moment structure of the vector time series $ \vecY_{n1}, \ldots, \vecY_{nn} $ when $ H_1 $ holds,
and in estimation of the change-point to locate the change. Under the null hypothesis the covariance matrix of $ \vecY_{ni} $, $1 \le i \le n $, is given by
$
\Var( \vecY_{n1}( \barr ) ) =  \bfSigma_{n0}, %= \bfSigma_{n0}( \barr ),
$
whereas it changes unter the alternative hypothesis from $ \bfSigma_{n0} $ to
$
\Var( \vecY_{n,\tau+1}( \carr ) ) = \bfSigma_{n1}. % = \bfSigma_{n1}( \carr ).
$
If $ ( \vecv_n, \vecw_n) \in \calV_n $, then the change is present in the sequence of the associated quadratic forms, 
$
\sigma_{n}^2[k] = \vecv_n^\top \Var( \vecY_{nk} ) \vecw_n,  1 \le k,
$
which change  from  $  \vecv_n^\top \bfSigma_{n0} \vecw_n $ to $  \vecv_n^\top \bfSigma_{n1} \vecw_n $ if $ \tau < n $, and the change-point test below will be based on an estimator of that bilinear form. A natural condition to ensure that this relationship holds asymptotically, yielding consistency of the proposed test, is 
\begin{equation}
\label{ThereisAChangeCondition}
\inf_{n \ge 1} | \Delta_n | > 0, \qquad \Delta_n :=  \vecv_n^\top \bfSigma_{n0} \vecw_n - \vecv_n^\top \bfSigma_{n1} \vecw_n.
\end{equation}
We shall, however, also discuss in Section~\ref{SubSecStrongApprox} more general conditions for the detectability of a change.

To introduce the proposed procedures, define the  partial sums of the outer products $ \vecY_{ni} \vecY_{ni}^\top $, 
\[
\matS_{nk} = \sum_{i \le k} \vecY_{ni} \vecY_{ni}^\top , \qquad k \ge 1,
\]
such that $ k^{-1} \matS_{nk}  $ is the sample variance-covariance matrix using the data $ \vecY_{n1}, \ldots, \vecY_{nk} $.
Let 
\[
U_{nk} = \vecv_n^\top \matS_{nk} \vecw_n, \qquad k \ge 1, n \ge 1.
\]
Consider the CUSUM-type statistic,  
\[
	C_n 
	= C_n( \vecv_n, \vecw_n ) 
	= \max_{1 \le k < n} \frac{1}{\sqrt{n}  } \left| \vecv_n^\top \left( \vecS_{nk} - \frac{k}{n} \vecS_{nn} \right) \vecw_n  \right|.
\]
The large sample approximations for $ C_n$ obtained in  \cite{StelandSachs2017} under $ H_0 $, and generalized in this paper, imply that $ C_n $ can be approximated by a Brownian bridge process, $ B^0 $. Hence, we can reject the null hypothesis of no change at the asymyptotic level $ \alpha \in (0,1) $, if
\begin{equation}
\label{CPTestRule}
T_n >K^{-1}_{1-\alpha}, \quad  \quad T_n = T_n( \vecv_n, \vecw_n )  = \max_{1 \le k < n} \frac{1}{\wh{\alpha}_n(\barr)\sqrt{n}} \left|  \vecv_n^\top \left( \vecS_{nk} - \frac{k}{n} \vecS_{nn} \right) \vecw_n  \right|,
\end{equation}
where $ \wh{\alpha}_n( \barr ) $ is a consistent estimator for the asymtotic standard deviation $ \alpha_n(\barr) $ associated to the series  $ \vecv_n^\top \matS_{nk} \vecw_n $, and $ K^{-1}_u $ is the $ u $-quantile, $ u \in (0,1) $, of the Kolmogorov distribution function,
$K(z) = 1 - \sum_{i=1}^\infty (-1)^{i-1} \exp( - 2 i^2 z^2 )  $, $z \in \R $. One may also use a weighted CUSUM test
\[
  C_n(g) 
  = \max_{1 \le k < n} \frac{1}{\sqrt{n} g(k/n) } \left| \vecv_n^\top \left( \vecS_{nk} - \frac{k}{n} \vecS_{nn} \right) \vecw_n  \right|
\]
for some weight function $g$, whose role is to compensate for the fact that the centered cumulated sums get small near the boundaries. The results of Section~\ref{SubSecStrongApprox} provide large sample approximations for a large class of weighting functions. An attractive choice would be the weight function $ g(t) = \sqrt{t(1-t)} $, but the corresponding supremum of the standardized Brownian bridge, $ B^0(t) / \sqrt{t(1-t)} $, $ 0 < t < 1 $, is not well defined due to the law of the iterated logarithm (LIL), requiring to use Gumbel-type extreme value asymptotics, \cite{CsoergoeHorvath1997}, known to converge slowly, see, e.g., \cite{Ferger2018}. For a discussion of the class of proper weight functions ensuring that $ B^0(t) / g(t) $ is a.s. finite we refer to \cite{CsoergoeHorvath1993}. One may use the weight function $ [t(1-t)]^\beta $ for some $ 0 < \beta < 1/2 $ or any weight function $ g $ satisfying
\begin{equation}
\label{AssumptionWeightFunction}
 %\text{$g$ continuous with } 
 g(t) \ge C_g [ t(1-t) ]^\beta, \ 0 \le t \le 1, \ \  \text{for some constant $ C_g $.}
\end{equation}
Therefore, one rejects the no-change null hypothesis, if 
\begin{equation} 
	\label{WeightedTest}
	T_n(g) > q_g(1-\alpha),
\end{equation}
where $ q_g $ denotes the quantile function of the law of $ \sup_{0<t<1} |B^0(t)|/g(t) $.   As studied in \cite{Ferger2018}, one may also standardize the unweighted CUSUM statistic by its maximizing point, i.e., substitute $ g(k/n) $ by $ \sqrt{ \wh{\tau}_n(1-\wh{\tau}_n) } $. The associated Brownian bridge standardized by its argmax attains a density wich has been explicitly calculated in \cite{Ferger2018}.

When the assumption that the vector time series has mean zero is in doubt, one may modify the above procedures by taking the cumulated outer products of the centered series, $ \wt\vecS_{nk} = \sum_{i \le k} (\vecY_{ni} - \overline{\vecY}_{n} ) (\vecY_{ni}-\overline{\vecY}_{n})^\top $, where $ \overline{\vecY}_n = \frac{1}{n} \sum_{i=1}^n \vecY_{ni} $. The associated weigthed CUSUM statistics are then given by 
\[
\wt{C}_n(g) = \max_{1 \le k < n} \frac{1}{\sqrt{n} g(k/n) } \left| \vecv_n^\top \left( \wt\vecS_{nk} - \frac{k}{n} \wt\vecS_{nn} \right) \vecw_n  \right|, \quad \wt T_n(g) =  \frac{\wt{C}_n(g)}{\wh{\alpha}_n(\barr) },
\]
and the null hypothesis is rejected using the rule (\ref{WeightedTest}) with $ T_n(g) $ replaced by $ \wt T_n(g) $.

To estimate the unkown change-point $ \tau $, we propose to use the estimator
\[
	\wh{\tau}_n = \argmax_{1 \le k < n} \frac{1}{g(k/n) n } \left|  \vecv_n^\top \left( \vecS_{nk} - \frac{k}{n} \vecS_{nn} \right) \vecw_n   \right|.
\]
Based on the estimator $ \wh{\tau}_n $ of the change-point, one may also estimate the nuisance parameter $ \alpha_n^2( \barr ) $ by $ \wh{\alpha}_{\wh{\tau}_n}^2( \barr ) $. 

For $L$ pairs of projection vectors $ \vecv_{nj}, \vecw_{nj} $, $ j \in  \{1, \dots, L\} $, consider the associated CUSUM transform
\[
  \vecC_n = \left( C_n( \vecv_{nj}, \vecw_{nj} ) \right)_{j=1}^L, \qquad \vecT_n = \left( T_n( \vecv_{nj}, \vecw_{nj} ) \right)_{j=1}^L.
\]
Observe that this transform differs from the  transform studied in \cite{WangSamworth2018}, where  the statistics are calculated coordinate-wise  and the transform is given by the corresponding $d $ CUSUM trajectories. 

We wish to test the null hypothesis of no change w.r.t. to $ \{ \vecv_n, \vecw_n \} $ 
\[
  H_0: \vecv_{nj}^\top \Var( \vecY_{n\tau} ) \vecw_{nj} = \vecv_{nj}^\top \Var( \vecY_{n,\tau+1} ) \vecw_{nj}, j \in  \{1, \ldots, L\},
\]
against the alternative hypothesis that, induced by a  change at $ \tau < n $, at least one bilinear form changes (assuming the projections are appropriately selected), 
\[
  H_1 : \exists j \in \{ 1, \ldots, L \} :  \vecv_{nj}^\top \Var( \vecY_{n\tau} ) \vecw_{nj} \not= \vecv_{nj}^\top \Var( \vecY_{n,\tau+1} ) \vecw_{nj}.
\]
As a global (omnibus) test one may reject $ H_0$ at the asymptotic significance level $ \alpha $, if 
\begin{equation}
\label{TestMultiv}
  Q_n = (\vecT_n - \bfmu^*_n  )^\top (\wh{\bfSigma}^\vecB_n)^{-} (\vecT_n - \bfmu^*_n ) > q_{vm}(1-\alpha).
\end{equation}
Here $ Q_n $ is a non-standard quadratic form, as it is based on the CUSUMs instead of a multivariate statistic which is asymptotically normal,
 $ \bfmu_n^* = \left(  \max_{1 \le k < n} \EE \max_{1 \le k < n} | \overline{B}^0(k/n) / g(k/n) | \right)_{j=1}^L $, $ (\wh{\bfSigma}^\vecT_n)^{-} $ is the Moore-Penrose generalized inverse of  $ \wh{\bfSigma}^\vecT_n = \left( \wh{\beta}_n^2(j,k)  \wh{\beta}_n^{-1}( k, k ) \wh{\beta}_n^{-1}( j, j ) \right)_{1 \le j \le L \atop 1 \le k \le L} $ and $ q_{mv}(p) $ denotes the $p$-quantile of the simulated distribution of $ Q_n $ using a Monte Carlo estimate of $\EE \max_{1 \le k < n} | \overline{B}^0(k/n) / g(k/n) |  $; the estimators $ \wh{\beta}_n^2(j,k)  $ of the asymptotic covariance of the $j$th and $k$th coordinate of the CUSUM transform $ \vecC_n$ are defined in the next section, calculated from a learning sample. It is worth mentioning that the statistic $ Q_n $ can be used to test for a change in the subspace $ \text{span} \{ \vecv_{n1}, \dots, \vecv_{nL}  \} $ by putting $ \vecw_{ni}  = \vecv_{ni} $, $ i \in  \{1, \ldots, n\}  $.

\subsection{Choice of the projections}

The question arises how to choose the projection vectors $ \vecv_n, \vecw_n $. Their choice may depend on the application. Here are some examples.

\begin{example} (Change of sets of covariances as in gene expression time series) \\
	Time series gene expression studies investigate the gene expression levels of a large number of genes measured at several time points, in order to identify and analyze activated genes and their relationship in a biological process, see \cite{BarJosephGitterEtAl2012}. Going beyond the expression levels and analyzing the dependence structure of gene expression is of interest. For example, a group of genes may be uncorrelated to others or the rest of the genome, but interactions inducing correlations may start after an external stimulus. To analyze two groups, e.g., the first $p$ and the last $q$ variables, one may use 
	$ \vecv_n = q^{-1} ( \eins_q^\top, \vecnull_{d_n-q}^\top )^\top \quad \text{and} \quad \vecw_n = p^{-1}( \vecnull_{d_n-p}^\top, \eins_p^\top )^\top, $ 	corresponding to
	$ \sigma_{n}^2[k] = \frac{1}{pq} \sum_{j=1}^q \sum_{\ell=d_n-p+1}^{d_n} \Cov( Y_{nk}^{(j)}, Y_{nk}^{(\ell)} ), $
	the average covariance between the first $q$ and the last $p$ coordinates. Further, in order to compare the first $q$ variables with the remaining ones, one could use $ \vecw_n = ( \vecnull_q^\top, (1/2)^r, (1/3)^r, \ldots,1/ (d_n-q+1)^r )^\top $  with $ r > 1 $.
\end{example}

\begin{example} (Spatial clustered sensors)\\
	Suppose that the $ d_n $  observed variables represent sensors of $r$ clusters or groups, e.g., sensors spatially distributed over $r$ geographic regions such as states. Such a classification is given by a partition $ \cup_{i=1}^r \calJ_{ni} = \{ 1, \dots, d_n \}$ with pairwise disjoint sets $ \emptyset \not= \calJ_{ni} $, $ i =1, \ldots, r $. To analyze the within-region and between-region covariance structures, one may consider the orthogonal sytem given by the vectors $ \vecv_{ni} = | \calJ_{ni} |^{-1} ( \eins_{\calJ_{ni}}( \nu ) )_{\nu=1}^{d_n} $, $ i = 1, \ldots, r $. Our results allow for the case of region-wise infill asymptotics where $ | \calJ_{ni} | $ increases with the sample size. In our data example, the grouping is, however, determined by a sparse PCA instead of using geographic locations.
\end{example}

\begin{example} (Change in the autocovariance function (ACVF) of a stationary time series) \\
	Our high-dimensional time series model also allows to analyze the ACVF of a stationary time series.	
	Let $ X_i^{(n)} = \sum_{j=0}^\infty c_{nj} \epsilon_{i-j}  $ be a stationary linear time series with coefficients $ \{ c_{nj} : j \ge 0 \} $, $ n \ge 1 $, satisfying Assumption (D) and define
	\[
	Y_i^{(\nu)} = X_{n,i+\nu}, \qquad i \in \N, \nu = 1, \ldots, n - d_n.
	\]
	Then $ \vecY_{ni} = ( Y_{ni}^{(1)}, \ldots, Y_{ni}^{(d_n)} )^\top $ is a special case of model (1), % (\ref{ModelYCoord}),
	and the change-point model (9) % (\ref{CPM}) 
	analyzes a change of the coefficients of $ X_t^{(n)} $ in terms of the ACVF $ \gamma_n(h)= \EE( X_1^{(n)} X_{1+h}^{(n)} ) $ up to the lag $ d_n $ respectively a change of the ACVF due to a change of the underlying coefficients.
	Since then the sample covariance matrix consists of the sample autocovariance estimators, the proposed CUSUM tests consider a weighted averages of them and taking unit vectors for $ \vecv_n, \vecw_n $ leads to a procedure closely related to the CUSUM test studied in \cite{BerkesGombayHorvath2009}.  Changes in autocovariances have also been studied by \cite{NaLeeLee2011} from a parametric point of view and by \cite{Steland2005} and \cite{LiZhao2013} using kernel methods.
\end{example}

\begin{example} (Financial portfolio analysis) \\
	In financial portfolio optimization one is given a stationary time series of returns $ \vecY_{nt}$ of $ d_n $ assets and seeks a portfolio vector representing the number of shares to hold from each asset. The variance-minimizing portfolio $ \vecw_n^* $ is obtained by minimizing the portfolio risk $ \Var( \vecw_n^\top \vecY_n ) $ under the constraint $ \eins^\top \vecw_n = 1 $. In order to  keep transactions costs moderate, sparsity constraints can be added, see, e.g., \cite{BrodieEtAl2009} where a $ \ell_1 $-penalty term is added. For bounds and confidence intervals of the risk $ \vecw_n^*{}^\top \bfSigma_n \vecw_n^* $ of the optimal portfolio see \cite{Steland2018}. 
\end{example}

In some applications selecting them from a known basis may be the method of choice.  In low- and high-dimensional multivariate statistics it is, however, a common statistical tool to project data vectors onto a lower dimensional subspace spanned by (sparse)  directions (axes) $ \vecv_n^{(1)}, \ldots, \vecv_n^{(K)} $. These directions can be obtained from a fixed basis or by a (sparse) principal component analysis using a learning sample. The projection is determined by the new coordinates
$ \vecv_n^{(i)}{}^\top \vecY_n $, $ i \in  \{1, \ldots, K\} $, for simplicity also called projections, and represent a lower dimensional compressed approximation of $ \vecY_n $. The uncertainty of its coordinates, i.e., of its position in the subspace, can be measured by the variances $ \vecv_n^{(i)}{}^\top \bfSigma_n \vecv_n^{(i)}{} $. Clearly, it is of interest to test for the presence of a change-point in the second moment structure of these new coordinates by analyzing the bilinear forms $   \vecv_n^{(i)}{}^\top \bfSigma_n \vecv_n^{(j)}{} $, $ 1 \le i, j \le K $. Also observe that one may analyze the spectrum, since for eigenvectors $ \vecv_n^{(i)} $ the associated eigenvalue is given by $  \vecv_n^{(i)}{}^\top \bfSigma_n \vecv_n^{(i)}{}  $.

The question under which conditions PCA or sparse PCA is consistent has been studied by various authors. The classic Davis-Kahan theorem, see \cite{DavisKahan1970} and \cite{WangSamworth2015} for a statistical version, relates this to consistency of the sample covariance matrix in the Frobenius norm, which generally does not hold under high-dimensional regimes  without additional assumptions, and minimal-gap conditions on the eigenvalues. Standard PCA is known to be inconsistent, if $ d/n \to y  \in (0,\infty] $, where here and in the following discussion a possible dependence of $d$ on $n$ is suppressed. Under certain spiked covariance models consistency can be achieved, see \cite{JohnstoneLu2009} if $ d = o(n) $,  \cite{Paul2007} under the condition $ d/n \to \gamma \in (0,1) $ and \cite{JungMarron2009} for $n$ fixed and $ d \to \infty $. Sparse principal components, first formally studied by \cite{JollifeEtAl2003} using lasso techniques, are strongly motivated by data-analytic aspects, e.g., by simplifying their interpretation, since linear combinations found by PCA typically involve all variables. Consistency has been studied under different frameworks, usually assuming additional sparsity constraints on the true eigenvectors (to ensure that their support set can be identfied) and/or growth conditions on the eigenvalues (to ensure that the leading eigenvalues are dominant in the spectrum). We refer to  \cite{ShenShenMarron2013} for simple thresholding sparse PCA when  $n$ is held fixed and $ d \to \infty $, \cite{BirnbaumJohnstonNadlerPaul2013} for results on minimax rates when estimating the leading eigenvectors under $ \ell_q $-constraints on the eigenvectors and fixed eigenvalues, whereas  \cite{CaiMaWu2015} provide minimax bounds assuming at most $k$ entries of the eigenvectors are non-vanishing and \cite{WangFan2017} derives asymptotic distributions allowing for diverging eigenvalues and $ d/n \to \infty $. 

To avoid that a change is not detectable because it takes place in a subspace of the orthogonal complement of the chosen projection vectors, a simple approach used in various areas is to take random projections. For example, one may draw the projection vectors from a fixed basis or, alternatively, sample them from a distribution such as a Dirichlet distribution or an appropriately transformed Gaussian law. Random projections of such kind are also heavily used in signal processing and especially in compressed sensing, by virtue of the famous distributional version of the Johnson-Lindenstrauss theorem, see \cite{JohnsonLindenstraussJoram1984}. This theorem  states that any $n$ points in a Euclidean space can be embedded into $ O( \varepsilon^2 \log(1/\delta) ) $ dimensions such that their distances are preserved up to $ 1 \pm \varepsilon $, with probability larger than $ 1 - \delta $. This embedding can be constructed with $ \ell_0 $-sparsity $ O( \varepsilon^{-1} \log(1/\delta)) $ of the associated projection matrix, see \cite{KaneNelson2014}. 

This discussion is continued in the next section after Theorem~\ref{TestConsistency} and related to the change-point asymptotics established there. 

\section{Asymptotics}
\label{Sec:Asymptotics}

The asymptotic results comprise approximations of the CUSUM statistics and related processes by maxima of Gaussian bridge processes, consistency of Bartlett type estimators of the asymptotic covariance structure of the CUSUMs, stopped sample versions of those estimators and consistency of the proposed change-point estimator.

\subsection{Preliminaries}

To study the asymptotics of the proposed change-point test statistics both under $ H_0 $ and $ H_1 $,  we consider the two-dimensional partial sums,
\begin{equation}
\label{Def_vecU}
\vecU_{nk} = \left( \begin{array}{cc} \vecU_{nk}^{(1)} \\  \vecU_{nk}^{(2)} \end{array}   \right) 
= \sum_{i \le k} \left( \begin{array}{cc} Y_{ni}( \avb ) Y_{ni}( \awb ) \\ Y_{ni}( \avc ) Y_{ni}( \awc ) \end{array}  \right)
\end{equation}
and their centered versions,
\begin{equation}
\label{Def_vecD}
\vecD_{nk} = \vecU_{nk} - \EE( \vecU_{nk} ),
\end{equation}
for $ k, n \ge 0 $, where for brevity $ Y_{ni}( \avb ) $ and $ Y_{ni}( \avc ) $ are defined by \[ Y_{ni}( \vecz^\top \aarr ) = \sum_{j=0}^\infty \sum_{\nu=1}^{d_n} z_{n\nu} a_{nj}^{(\nu)} \epsilon_{i-j} \] for $ \vecz \in \{ \vecv, \vecw \} $ and $ \aarr \in \{ \barr, \carr \} $.

Introduce the filtrations $ \calF_{nk} = \sigma( \epsilon_{ni} : i \le k ) $, $ k \ge 1 $, $ n \in \N $. In Lemma~\ref{LemmaMartingaleApprox} it is shown that $ \vecD_{nk} $ can be approximated by a $ \calF_{nk} $-martingale array with asymptotic covariance parameter $ \beta_n^2( \barr, \carr ) $ defined in Lemma~\ref{CorollaryBetaSq}, see (\ref{FormulaBetaSq}).  Denote by $ \alpha_n^2( \aarr ) = \beta_n^2( \aarr, \aarr ) $, for $ \aarr \in \{ \barr, \carr \} $, the associated asymptotic variance parameter.

As a preparation, let $ \vecB_n(t) = (\vecB_{n}^{(1)}(t), \vecB_{n}^{(2)}(t))^\top $, $ t \ge 0 $, be a two-dimensional mean zero Brownian motion with variance-covariance matrix 
\begin{equation}
\label{AsCovMatrix}
\left( \begin{matrix}  \Var( \vecB_{n}^{(1)} ) & \Cov( \vecB_{n}^{(1)}, \vecB_{n}^{(2)} ) \\ \Cov( \vecB_{n}^{(1)}, \vecB_{n}^{(2)} ) & \Var( \vecB_{n}^{(2)} ) \end{matrix} \right) = 
\left( \begin{matrix}  \alpha_n^2( \barr) & \beta_n^2( \barr, \carr ) \\  \beta_n^2( \barr, \carr )  & \alpha_n^2( \carr) \end{matrix}  \right), \qquad n \ge 1.
\end{equation}
For $n \ge 1 $ define the Gaussian processes
\begin{align}
\label{DefGaussPro}
G_n(t) &=  \vecB_{n}^{(1)}(t) \eins( t \le \tau ) + [ \vecB_{n}^{(1)}( \tau ) + ( \vecB_{n}^{(2)}(t) - \vecB_{n}^{(2)}(\tau) ) ] \eins( t > \tau ), \qquad t \ge 0, \\
\nonumber
\overline{G}_n( t ) &= \frac{1}{\sqrt{n}} G_n( t n ), \qquad t \in [0,1].
\end{align}
Before the change, $ G_n $ is the Brownian motion $ \vecB_{n}^{(1)} $ with variance $ \alpha_n^2( \barr )  $ and after the change it behaves as the Brownian motion $ \vecB_{n}^{(2)} $ with start in $ \vecB_{n}^{(1)}( \tau ) $ and variance $ \alpha_n^2( \carr) $. 
Further define
\begin{align*}
G_n^0( k ) &= G_n( k ) - \frac{k}{n} G_n( n ), \qquad k \le n, n \ge 1, \\
\overline{G}_n^0(t) &= \overline{G}_n( t ) - t \overline{G}_n(1), \qquad t \in [0,1].  
\end{align*}
As shown in the appendix, it holds
\[
\Cov( G_{n}(s), G_n(t) ) = \left\{ \begin{array}{ll} \min(s,t) \alpha_n^2( \barr ), &  s,t \le \tau $ or $ s \le \tau < t, \\
 \min(s-\tau, t-\tau) \alpha_n^2( \carr ), &  \tau \le s, t. \end{array} \right. 
\]
\begin{equation}
\label{Cov_Gn0_prechange}
  \Cov( G_n^0(s), G_n^0(t) )= \left\{ \begin{array}{cc} 
  \left(  \min( s, t ) - \frac{st}{n} \right)  \alpha_n^2( \barr ), &  s,t \le \tau $ or $ s \le \tau < t, \\
  \left(  \min( s - \tau, t - \tau ) - \frac{st}{n} \right)  \alpha_n^2( \carr ), & \tau \le s, t.
  \end{array}\right.
\end{equation}

\subsection{Change-point Gaussian approximations}
\label{SubSecStrongApprox}

Closely related to the CUSUM procedures are the following \cadlag processes: Define
\[
	\calD_n(t) = n^{-1/2} \vecv_n^\top ( \matS_{n,\trunc{nt}} - \trunc{nt} \EE  (\matS_{nn}) ) \vecw_n, \qquad t \in [0,1], n \ge 1,
\]
and the introduce the associated bridge process
\[
\calD_n^0(t) = \calD_n\left( \frac{\trunc{nt}}{n} \right) - \frac{\trunc{nt}}{n} \calD_n(1), \qquad t \in [0,1].
\]
Observe that its expectation is 
$
\EE (\calD_n^0( k/n )) = \frac{1}{\sqrt{n}} \left( \sum_{i=1}^k \sigma_n^2[i] - \frac{k}{n} \sum_{i=1}^n \sigma_n^2[i]  \right),
$
and vanishes, if $ \sigma_n^2[1] = \cdots = \sigma_n^2[n] $. But a non-constant series $ \sigma_n^2[i] $, $ i \in \{1, \ldots, n\} $, may lead to $\EE (\calD_n^0( k/n ))  \not= 0 $. This particularly holds for the change-point model.
Our results show that $ \calD_n(t) $ ($ \calD_n^0(t) $) can be approximated by a Brownian (bridge) process and lead to a FCLT under weak regularity conditions, and the same holds true for weighted version of theses \cadlag processes for nice weighting functions $g$. 

Define for $  k \ge 1, n \ge 1, $ 
\begin{align*}
	U_{nk} & = \vecv_n^\top \matS_{nk} \vecw_n, \\
	D_{nk} &= U_{nk} - \EE( U_{nk} )  = \vecv_n^\top ( \matS_{nk} - \EE  (\matS_{nk}) ) \vecw_n,
\end{align*}
and
\[
m_n(k) := \EE \left( U_{nk} - \frac{k}{n} U_{nn} \right)
= \left\{
\begin{array}{cc}
\frac{k(n-\tau)}{n} \Delta_n, & \qquad k \le \tau, \\
\tau \frac{n-k}{n} \Delta_n, & \qquad k > \tau.
\end{array}
\right.
\]

The following theorem extends the results of \cite{StelandSachs2017,StelandSachs2018} and justifies the proposed tests (\ref{CPTestRule}) and (\ref{WeightedTest}) when combined with the results of the next section on consistency of the asymptotic variance parameters. 
This and all subsequent results consider the basic time series model (\ref{ModelYCoord}), but all results hold for the multivariate linear processes and VARMA models introduced in Section~2  under the conditions discussed there. 

\begin{theorem}
	\label{BasicStrongApprox}
	Suppose that $ \{ \epsilon_{ni} : i \in \Z, n \ge 1 \} $ satisfies Assumption (E). 
	Let $ \vecv_n, \vecw_n $ be weighting vectors with  $ \ell_1 $-norms  satisfying
	\begin{equation}
	\label{AssumptionL1Projections}
	\| \vecv_n \|_{\ell_1}  \| \vecw_n \|_{\ell_1} = O( n^{\eta} ), \qquad \text{for $ 0 \le \eta \le (\theta-\theta')/4$ for some $ 0 < \theta' < \theta $},
	\end{equation} 
	and let $ \barr = \{ b_{nj}^{(\nu)} \} $ and $ \carr = \{ c_{nj}^{(\nu)} \} $  be coefficients satisfying Assumption (D). If the change-point model (\ref{CPM}) holds, then, for each $n$,  one may redefine, on a new probability space, the vector time series together with a two-dimensional mean zero Brownian motion $ \{ \vecB_n(t) : t \in [0,1] \} $ with coordinates $ \vecB_{n}^{(i)}(t) $, $ t \in [0,1] $, $ i = 1, 2 $, characterized by the covariance matrix (\ref{AsCovMatrix}) associated to the parameters $ \alpha_n^2( \barr ), \alpha_n^2( \carr ) $, assumed to be bounded away from zero, and $ \beta_n^2( \barr, \carr ) $, such that for some constant $C_n $ the following assertions hold true almost surely:
	\begin{itemize}
		\item[{\rm (i)}] $ \| \vecD_{nt} - \vecB_n(t) \|_2 \le C_n t^{1/2-\lambda} $, $ t > 0 $.
		\item[{\rm (ii)}] $ \max_{1 \le k < n} \| \vecD_{nk} - \frac{k}{n} \vecD_{nn} - [ \vecB_{nk} - \frac{k}{n} \vecB_n(n) ]  \|_2 \le 2 C_n n^{1/2-\lambda} $, $ n \ge 1 $.
		\item[{\rm (iii)}] $ \max_{1 \le k < n} \frac{1}{\sqrt{n}}  | D_{nk} - \frac{k}{n} D_{nn} - G_n^0(k) | \le   6 \sqrt{2} C_n n^{-\lambda} $, $ n \ge 1 $.
		\item[{\rm (iv)}] $ \left| \max_{1 \le k < n} \frac{1}{\sqrt{n}}  | D_{nk} - \frac{k}{n} D_{nn} | - | \max_{1 \le k < n}  \frac{1}{\sqrt{n}}  | G_n^0(k) | \right|  \le 6 \sqrt{2} C_n n^{-\lambda} $, $ n \ge 1 $.
		\item[{\rm (v) }] $ \max_{1 \le k < n} \frac{1}{\sqrt{n}}  | U_{nk} - \frac{k}{n} U_{nn} - [ m_n(k) + G_n^0(k)] | \le  6 \sqrt{2} C_n n^{-\lambda} $, $ n \ge 1 $.
		\item[{\rm (vi) }] $ \left|  \max_{1 \le k < n} \frac{1}{\sqrt{n}}  | U_{nk} - \frac{k}{n} U_{nn} | - \max_{1 \le k < n} \frac{1}{\sqrt{n}} | m_n(k) + G_n^0(k) | \right| \le 6 \sqrt{2} C_n n^{-\lambda} $, $ n \ge 1 $.
	\end{itemize}
	If $ C_n n^{-\lambda} = o(1) $, then we also have
	\begin{itemize}
		\item[{\rm (vii)}] $ \sup_{t \in [0,1]} \left| \calD_n(t) - [\mu_n(t) + \overline{G}_n( \trunc{nt}/n ) ] \right| = o(1) $, a.s., as $ n \to \infty $,
		\item[{\rm (viii)}] $ \sup_{t \in [0,1]} \left| \calD_n^0(t) - [\mu_n(t) + \overline{G}_n^0( \trunc{nt}/n ) ] \right| = o(1) $, a.s., as $ n \to \infty $,
	\end{itemize}
	where $ \mu_n(t) = \trunc{nt}/{n} (1-\tau/{n}) \Delta_n \eins( t \le \tau/n ) + \tau/n (1- \trunc{nt}/n) \Delta_n \eins( t > \tau/n) $, $ t \in [0,1] $.
	Further, provided the weight function $g$ satisfies (\ref{AssumptionWeightFunction}), the corresponding above assertions hold in probability, if $ C_n n^{-\lambda} = o(1) $. Especially, 
	\begin{equation}
		\label{MainForWeightedCUSUM1}
	  \max_{1 \le k < n} \frac{1}{\sqrt{n} g(k/n)}  \left| U_{nk} - \frac{k}{n} U_{nn} - \left[  m_n(k) + G_n^0(k) \right]  \right| = o_{\PP}(1)
	\end{equation}
	and
	\begin{equation}
	\label{MainForWeightedCUSUM}
	  \left|  \max_{1 \le k < n} \frac{1}{\sqrt{n} g(k/n)}  \left| U_{nk} - \frac{k}{n} U_{nn} \right| - \max_{1 \le k < n} \frac{1}{\sqrt{n}g(k/n)} \left| m_n(k) + G_n^0(k) \right| \right| = o_{\PP}(1).
	\end{equation}
\end{theorem}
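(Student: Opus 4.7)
The plan is to build the assertions in sequence, taking (i) as the real work and reducing (ii)--(vi) to algebraic manipulations of (i), then lifting to the \cadlag\ processes in (vii)--(viii) and finally pushing through the weighted versions. The starting point for (i) is Lemma~\ref{LemmaMartingaleApprox}, which says that the bivariate centered sum $\vecD_{nk}$ of (\ref{Def_vecD}) admits a martingale approximation with respect to $\calF_{nk}$ whose asymptotic covariance matrix is the one in (\ref{AsCovMatrix}), determined by $\alpha_n^2(\barr)$, $\alpha_n^2(\carr)$ and $\beta_n^2(\barr,\carr)$. To this martingale I would apply a Skorokhod-embedding / KMT-type strong invariance principle, using the $(4+\delta)$ moments from Assumption~(E) and the $\ell_1$-bound (\ref{AssumptionL1Projections}) on the projections together with the decay from~(D) to control the $L^p$-norms of the martingale increments, and thereby construct, on a new probability space, a two-dimensional Brownian motion $\vecB_n(\cdot)$ with covariance (\ref{AsCovMatrix}) such that $\|\vecD_{nt}-\vecB_n(t)\|_2 \le C_n t^{1/2-\lambda}$ a.s. for a suitable $\lambda>0$ depending on $\theta,\theta',\delta$. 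The growth rate $C_n$ absorbs the dependence of the martingale variances on $n$ through $\vecv_n,\vecw_n$, which is why (\ref{AssumptionL1Projections}) enters.

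Assertion (ii) is then an immediate triangle-inequality consequence of (i), since
\[
\vecD_{nk}-\tfrac{k}{n}\vecD_{nn}-\bigl[\vecB_n(k)-\tfrac{k}{n}\vecB_n(n)\bigr]
=\bigl(\vecD_{nk}-\vecB_n(k)\bigr)-\tfrac{k}{n}\bigl(\vecD_{nn}-\vecB_n(n)\bigr),
\]
and $k/n\le1$. For (iii) I would exploit the change-point structure of model (\ref{CPM}): for $k\le\tau$ the summands $\vecY_{ni}\vecY_{ni}^\top$ involve only the coefficients $\barr$, so $D_{nk}=\vecD_{nk}^{(1)}$; for $k>\tau$ the first $\tau$ summands still contribute $\vecD_{n\tau}^{(1)}$ while the remaining ones contribute $\vecD_{nk}^{(2)}-\vecD_{n\tau}^{(2)}$. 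The Gaussian process $G_n$ of (\ref{DefGaussPro}) has precisely this piecewise form by construction, and $G_n^0$ inherits the same pre/post-$\tau$ splitting. Applying (ii) coordinate-wise to both regimes and combining (with $6\sqrt{2}$ absorbing the triangle constants from the splice at $\tau$) yields (iii). Assertion (iv) follows from (iii) by the reverse triangle inequality $|\max|a_k|-\max|b_k||\le\max|a_k-b_k|$. Assertions (v) and (vi) are obtained from (iii) and (iv) by adding the deterministic mean
\[
U_{nk}-\tfrac{k}{n}U_{nn}=D_{nk}-\tfrac{k}{n}D_{nn}+m_n(k),
\]
which is the very definition of $m_n(k)$. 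The transition to (vii) and (viii), and to the function $\mu_n(t)$, is essentially cosmetic: $\calD_n(t)$ and $\calD_n^0(t)$ are step functions constant on each interval $[k/n,(k+1)/n)$, so the supremum over $t\in[0,1]$ agrees with the discrete maximum up to the value at $t=1$, and $\mu_n(\trunc{nt}/n)$ is exactly $n^{-1/2}m_n(\trunc{nt})$.

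The hard part will be the weighted statements (\ref{MainForWeightedCUSUM1})--(\ref{MainForWeightedCUSUM}). Dividing the almost-sure bound $6\sqrt2 C_n n^{-\lambda}$ of (v) by $g(k/n)\ge C_g[k(n-k)/n^2]^\beta$ produces an error of order $n^{-\lambda+\beta}$ at the extreme indices $k\in\{1,n-1\}$, which is $o(1)$ only if $\lambda>\beta$, and in general we do not have that. The standard remedy, which I would follow, is a truncation argument in the spirit of Cs\"org\H{o}--Horv\'ath: split the maximum into a bulk part over $k\in[\delta_n n,(1-\delta_n)n]$ with $\delta_n\downarrow0$ slowly, where $g(k/n)$ is bounded away from zero and (v) gives the bound in probability directly, and a boundary part where one bounds $\max|G_n^0(k)|/g(k/n)$ and $\max|D_{nk}-\tfrac{k}{n}D_{nn}|/g(k/n)$ separately in probability, using the LIL on the two Brownian pieces of $G_n$ (pre- and post-$\tau$) plus the approximation (iii) to transfer the second bound from the first. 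The condition (\ref{AssumptionWeightFunction}) with $\beta<1/2$ is sharp for the LIL to make the boundary suprema tight, and the two-regime form of $G_n^0$ only means that the LIL has to be invoked twice, once for each Brownian motion $\vecB_n^{(1)},\vecB_n^{(2)}$; combined with the martingale maximal inequality applied to the boundary blocks this yields the $o_{\PP}(1)$ assertions.
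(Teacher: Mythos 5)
Your overall architecture coincides with the paper's: (ii)--(vi) are reduced to (i) by the triangle inequality and the splice of $D_{nt}$ at $\tau$ into $\vecD^{(1)}$ and $\vecD^{(2)}$ (which is exactly how the paper obtains the factor $3\sqrt2$ and then $6\sqrt2$), (vii)--(viii) are the step-function rescaling, and your treatment of the weighted statements --- bulk/boundary truncation with a slowly shrinking cutoff, the LIL for the Gaussian piece(s), and a H\'ajek--R\'enyi/martingale maximal inequality for the partial-sum piece transferred through the martingale approximation --- is precisely the paper's argument (the paper takes the cutoff $\varepsilon/\gamma_n$ with $\gamma_n=n^{0.1\lambda/\beta}$ so that $\gamma_n^\beta C_n n^{-\lambda}=o(1)$; note that on this bulk $g$ is \emph{not} bounded away from zero, which is why the explicit rate $\gamma_n^\beta$ has to be traded against $n^{-\lambda}$, but this is what you evidently intend).

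The one genuine gap is in step (i). You propose a ``Skorokhod-embedding / KMT-type'' invariance principle for the approximating martingale, but the process $\vecD_{nt}$ is two-dimensional, and the Skorokhod embedding is an intrinsically one-dimensional device; it does not produce a coupling of a bivariate martingale with a bivariate Brownian motion having the prescribed cross-covariance $\beta_n^2(\barr,\carr)$. The paper instead invokes Philipp's (1986) almost sure invariance principle for Hilbert-space-valued, weakly dependent arrays, and the actual technical content of the proof of (i) is the verification of its three hypotheses: (I) uniform $(2+\delta)$-moments of the increments $\bfxi_i^{(n)}$ (via the $C_r$ and Cauchy--Schwarz inequalities and the decay condition), (II) the bound $\EE\|\EE(\vecS^{(n)}_{n',m'}\mid\calF_{n,m'})\|_2\ll (n')^{1/2-\theta'/2}$ uniformly in the block starting point $m'$ (via Lemma~\ref{LemmaMartingaleApprox}), and, crucially, (III) the $L_1$-convergence of the \emph{conditional} covariance operators to the covariance of $\vecB_n$ at rate $(n')^{-\theta'/2}$, uniformly in $m'$ (Lemma~\ref{KonvKovBeta}). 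Your proposal never addresses (III), which is where the cross term $\beta_n^2(\barr,\carr)$ and the admissible growth $\|\vecv_n\|_{\ell_1}\|\vecw_n\|_{\ell_1}=O(n^\eta)$ with $\eta\le(\theta-\theta')/4$ actually enter; without it, no vector-valued strong approximation theorem applies, and the constant $\lambda$ and the form of $C_n$ cannot be justified. Everything downstream of (i) in your write-up is sound conditional on this step being supplied.
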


\begin{remark} {\rm Provided  the original probability space,  $ (\Omega, \calA, \PP) $,  is rich enough to carry an additional uniform random variable, the strong approximation results of Theorem~\ref{BasicStrongApprox} can be constructed on  $ (\Omega, \calA, \PP) $.}
\end{remark}

When there is a change, the drift term $ m_n $ yields the consistency of the test. 

\begin{theorem}
	\label{TestConsistency}
	Under the assumptions of Theorem~\ref{BasicStrongApprox} and  (\ref{ThereisAChangeCondition}), 
	$
	\max_{1 \le k < n} \frac{1}{\sqrt{n}} \left| U_{nk} - \frac{k}{n} U_{nn} \right| \to \infty,  n \stackrel{\PP}{\to} \infty. $
\end{theorem}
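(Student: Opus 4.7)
The plan is to use part (vi) of Theorem~\ref{BasicStrongApprox} to reduce the problem to the deterministic drift $m_n(k)$ plus the Gaussian bridge $G_n^0(k)$, then to show the drift evaluated at the true change-point dominates the bridge. By (vi), up to an additive $6\sqrt{2}\, C_n n^{-\lambda}=o(1)$ term,
\[
\max_{1 \le k < n} \frac{1}{\sqrt{n}} \left| U_{nk} - \frac{k}{n} U_{nn}\right|
\ \ge\ \max_{1 \le k < n} \frac{1}{\sqrt{n}}\bigl|\, m_n(k) + G_n^0(k) \,\bigr|\ -\ 6\sqrt{2}\, C_n n^{-\lambda}.
\]
Since the distribution of the left-hand side is invariant under the change of probability space used in Theorem~\ref{BasicStrongApprox}, the conclusion transfers back to the original space, and it suffices to show that the right-hand side diverges in probability.

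Next, I would lower bound the maximum by the value at $k=\tau$ and apply the reverse triangle inequality,
\[
\max_{1 \le k < n} \frac{1}{\sqrt{n}}\bigl|\, m_n(k) + G_n^0(k) \,\bigr|
\ \ge\ \frac{|m_n(\tau)|}{\sqrt{n}} - \frac{|G_n^0(\tau)|}{\sqrt{n}}.
\]
From the explicit form of $m_n$ given just before the theorem, $m_n(\tau) = \tau(n-\tau)\Delta_n/n$, hence
\[
\frac{|m_n(\tau)|}{\sqrt{n}} \ =\ \sqrt{n}\cdot\frac{\tau(n-\tau)}{n^{2}}\cdot|\Delta_n|.
\]
Under $\tau = \lfloor n\vartheta\rfloor$ with $\vartheta\in(0,1)$ as in~(\ref{CPAssumption}) (implicitly in force through the change-point setup), the factor $\tau(n-\tau)/n^{2}\to\vartheta(1-\vartheta)>0$, and $\inf_n |\Delta_n|>0$ by~(\ref{ThereisAChangeCondition}), so the drift contribution grows at rate $\sqrt{n}$.

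For the Gaussian bridge part, using the covariance formula~(\ref{Cov_Gn0_prechange}) evaluated at $s=t=\tau$,
\[
\operatorname{Var}\!\left(\frac{G_n^0(\tau)}{\sqrt{n}}\right)
= \frac{\tau(1-\tau/n)}{n}\,\alpha_n^2(\barr)
\le \tfrac{1}{4}\,\alpha_n^2(\barr),
\]
and since $\alpha_n^2(\barr)$ is bounded (by the hypothesis in Theorem~\ref{BasicStrongApprox} that the asymptotic variances are bounded away from zero, together with the standing bounds supplied by Assumption (D) and (E)), a Chebyshev bound gives $G_n^0(\tau)/\sqrt{n}=O_{\PP}(1)$. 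Combining, the drift term tends to infinity while the bridge and strong-approximation remainder stay $O_{\PP}(1)$ and $o(1)$, respectively, which yields the claimed divergence in probability.

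The step that would require a little care is the transfer from the almost-sure inequality on the constructed probability space to a probability statement on $(\Omega,\calA,\PP)$; this is, however, immediate since the left-hand side of the approximation in (vi) is a measurable functional of $\vecY_{n1},\ldots,\vecY_{nn}$ whose distribution is preserved by the Skorohod embedding. The truly substantive content of the proof — the Gaussian approximation and the computation of the covariance of $G_n^0$ — has already been established in Theorem~\ref{BasicStrongApprox} and the identity~(\ref{Cov_Gn0_prechange}), so what remains is essentially the elementary drift-versus-noise comparison outlined above.
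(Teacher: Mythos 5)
Your strategy (drift at the change-point grows like $\sqrt{n}\,|\Delta_n|$, noise stays $O_{\PP}(1)$) is the same as the paper's, but you control the noise by a different route, and that route has one genuine gap. You invoke part (vi) of Theorem~\ref{BasicStrongApprox} and treat the approximation error $6\sqrt{2}\,C_n n^{-\lambda}$ as $o(1)$. The theorem does not give you that: $C_n$ is an unspecified, $n$-dependent constant coming from the Philipp embedding, and the paper explicitly adds ``$C_n n^{-\lambda}=o(1)$'' as a \emph{separate} hypothesis before asserting (vii), (viii) and the weighted statements. Theorem~\ref{TestConsistency} is stated only ``under the assumptions of Theorem~\ref{BasicStrongApprox} and (\ref{ThereisAChangeCondition})'', so your proof as written uses a condition that is not available. (For divergence you would only need $C_n n^{-\lambda}=o(\sqrt{n})$, but even that is not guaranteed by the stated hypotheses.) Your transfer back to the original probability space is fine, since divergence in probability of $\max_k n^{-1/2}|U_{nk}-\tfrac{k}{n}U_{nn}|$ is a purely distributional statement.

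The paper's own proof avoids the strong approximation entirely: it uses the martingale array $\wt{D}_{nk}$ from Theorem~\ref{Mart_for_Dnk}, for which $\EE(\wt{D}_{nk}-\tfrac{k}{n}\wt{D}_{nn})^2\stackrel{n,k}{\ll}n$ and $\EE(D_{nk}-\tfrac{k}{n}D_{nn}-[\wt{D}_{nk}-\tfrac{k}{n}\wt{D}_{nn}])^2\stackrel{n,k}{\ll}n^{1-\theta}$, and then Chebyshev gives $n^{-1/2}|D_{n,\tau+1}-\tfrac{\tau+1}{n}D_{nn}|=O_{\PP}(1)$ directly on the original space, with no reference to $C_n$. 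The drift step is then identical to yours (the paper evaluates at $k=\tau+1$ rather than $k=\tau$, which is immaterial). To repair your argument, either add the hypothesis $C_n n^{-\lambda}=o(1)$ or replace the appeal to (vi) by the martingale/Chebyshev bound; the remainder of your computation, including the variance bound $\Var(G_n^0(\tau)/\sqrt{n})\le\tfrac14\alpha_n^2(\barr)$ from (\ref{Cov_Gn0_prechange}), is correct (modulo the minor point that $\alpha_n^2(\barr)$ need only be $O(n^{2\eta})$ with $2\eta<1/4$ under (\ref{AssumptionL1Projections}), which is still dominated by the $\sqrt{n}$ drift).
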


Note that Theorem~\ref{BasicStrongApprox} holds without the conditions (\ref{CPAssumption}) and (\ref{ThereisAChangeCondition}). To discuss conditions of detectability of a change, observe that the drift of the approximating Gaussian process in (\ref{MainForWeightedCUSUM1}) is given by
\[
   H_n( k/n ) = H_n(k/n;  \tau/n, \Delta_n,  g ) = \sqrt{n} \Delta_n \left[ \frac{k(n-\tau) }{  n^2 g(k/n) } \eins( k \le \tau )
  + \tau \frac{(n-k)}{ n^2 g(k/n)} \eins(k > \tau) \right]. 
\]
If this function is asymptotically constant, especially if $ \Delta_n \not= 0 $ for all $n$ but $ \sqrt{n} \Delta_n = o(1) $ (which implies $ |\alpha_n^2(\barr) - \alpha_n^2(\carr) | = o(1) $ by (\ref{ChangeCond1}) and Lemma~\ref{CorollaryBetaSq}) and $ \tau /n \to \vartheta \in (0,1) $, then the change is asymptotically not detectable, since the asymptotic law is the same as under the null hypothesis.  Now assume $ \tau/n \to \vartheta $. A change $ \vartheta $ located in a measurable set $ A \subset (0,1) $ with positive Lebesgue measure is detectable and changes the asymptotic law, if
$
  H_n \to h^*,  n \to \infty,
$
for some function $h^* \not= 0$ on $A$, since then the asymptotic law is given by $ \sup_{0<t<1} | [h^*(t) + B^0(t)]/g(t) | $, or if
$
	 H_n \stackrel{\PP}{\to} \infty,  n \to \infty,  on\ [\vartheta,1),
$
cf. Theorem~\ref{TestConsistency}. The  case $ H_n \to h^* $ corresponds to a local alternative such as $ \bfSigma_{n1} = \bfSigma_{n0} + \boldsymbol{\Delta}_n / \sqrt{n} $ for some $ d_n \times d_n $ matrix $ \boldsymbol{\Delta}_n $ such that $ \lim_{n \to \infty}  \vecv_n^\top \boldsymbol{\Delta}_n  \vecw_n $ exists. For example, if in the spiked covariance model (\ref{SpikedCovModel}) a new local spike term of the form $ n^{-1/2} \lambda_{r+1} \vecu_{n,r+1} $ appears after the change-point, then $ \boldsymbol{\Delta}_n  =  \lambda_{r+1} \vecu_{n,r+1} $ and $ \Delta_n = \lambda_{r+1} \vecv_n^\top \vecu_{n,r+1} \vecw_n^\top \vecu_{n,r+1} $.  Condition (\ref{ThereisAChangeCondition}) is then satisfied, if the weighting vectors are not asymptotically orthogonal to the direction of the new spike. 

Observe that $ H_n( \cdot, \tau, \Delta_n; g ) $ is linear in $ \Delta_n = \vecv_n^\top( \bfSigma_{n0} - \bfSigma_{n1}) \vecw_n $. Clearly, $ |\Delta_n| $ is maximized if $ \vecv_n = \vecw_n $ is a leading eigenvector of  $ \bfSigma_{n0} - \bfSigma_{n1} $. This can be seen from the spectral decomposition $ \boldsymbol{\Delta}_n = \sum_{i=1}^s \phi_{ni} \boldsymbol{\delta}_{ni} \boldsymbol{\delta}_{ni}^\top $, where $ \boldsymbol{\delta}_{ni} $ are the eigenvectors and $ \phi_{ni} $ the eigenvalues. When there is no knowledge about the change, e.g., in terms of the $ \phi_{ni} $ and/or $ \boldsymbol{\delta}_{ni}  $ or in terms of the model coefficients $ c_{nj}^{(\nu)} $, it makes sense to select $ \vecv_n, \vecw_n  $ from a known basis or as leading (sparse) eigenvectors of $ \bfSigma_{n0} $, estimated from a learning sample, in order to obtain a procedure which is capable to react, if the dominant part of the eigenstructure of the covariance matrix changes. Clearly, a change in the orthogonal complement of chosen projection vectors is not detectable. This can be avoided by considering, in addition, random projection(s). 

For the CUSUM statistics based on the centered time series we have the following approximation result.

\begin{theorem} 
\label{BasicStrongApproxCentered}
	Let the original probability space be rich enough to carry an additional uniform random variable. Assume the conditions of  Theorem~\ref{BasicStrongApprox} and the strengthended decay condition $ \sup_{n \ge 1} \max_{1 \le \nu \le d_n} | c_{nj}^{(\nu)} | \ll (j \vee 1)^{-1-\theta} $ for some $ \theta > 0 $ hold.
	 Suppose that  the vector time series is centered at the sample averages $ \wh{\mu}_\nu = \frac{1}{n} \sum_{i=1}^n Y_{ni}^{(\nu)} $, before applying the CUSUM procedures, leading to the statistics $ \wt{C}_n(g) $ and $ \wt{T}_n(g) $.
	Then assertions (i) and (ii) of Theorem~\ref{BasicStrongApprox} hold true with an additional error term $  o_{\PP}( n^{1/2} )  $ and (iii)-(vi) with an additional $ o_{\PP}(1) $ term. Finally, (vii) and (viii) hold in probability, if $ C_n n^{-\lambda} = o(1) $. 
\end{theorem}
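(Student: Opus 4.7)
The plan is to reduce the centered statistics to the uncentered ones plus algebraic remainders, and then show these remainders are asymptotically negligible on each scale in question. First I would exploit the elementary identity
\[
\sum_{i \le k}(a_i - \bar a)(b_i - \bar b) = \sum_{i \le k} a_i b_i - \bar b \sum_{i \le k} a_i - \bar a \sum_{i \le k} b_i + k\, \bar a \bar b,
\]
with $\bar a = n^{-1}\sum_{j\le n} a_j$, and similarly for $\bar b$. Applying it to $a_i = Y_{ni}(\avb)$, $b_i = Y_{ni}(\awb)$, and subtracting $(k/n)$ times the same identity at $k=n$, yields the clean decomposition
\[
\wt U_{nk}^{(1)} - \tfrac{k}{n}\wt U_{nn}^{(1)} = \Bigl[U_{nk}^{(1)} - \tfrac{k}{n}U_{nn}^{(1)}\Bigr] - \Bigl(V_k - \tfrac{k}{n}V_n\Bigr)\tfrac{W_n}{n} - \tfrac{V_n}{n}\Bigl(W_k - \tfrac{k}{n}W_n\Bigr),
\]
where $V_k := \sum_{i \le k} Y_{ni}(\avb)$ and $W_k := \sum_{i \le k} Y_{ni}(\awb)$; an analogous identity holds for $\wt U_{nk}^{(2)}$ in terms of $\avc,\awc$, with the change-point switch of the underlying coefficients handled exactly as in Theorem~\ref{BasicStrongApprox}.

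Next I would observe that $\{Y_{ni}(\avb)\}_i$ is itself a univariate linear process in $\epsilon_{ni}$ with coefficient $\sum_\nu v_{n\nu}b_{nj}^{(\nu)}$ at lag $j$. The strengthened decay hypothesis combined with $\|\vecv_n\|_{\ell_1} = O(n^\eta)$ makes these coefficients absolutely summable in $j$ (in contrast to Assumption~(D), which only gives square-summability), so the scalar process has a uniformly bounded long-run variance and the usual invariance principle applies. It follows that $V_n, W_n = O_{\PP}(\sqrt n)$, $V_n/n, W_n/n = O_{\PP}(1/\sqrt n)$, and $\max_{1 \le k < n}|V_k - (k/n)V_n| = \max_{1 \le k < n}|W_k - (k/n)W_n| = O_{\PP}(\sqrt n)$. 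Consequently the remainder in the decomposition above is uniformly $O_{\PP}(\sqrt n) \cdot O_{\PP}(1/\sqrt n) = O_{\PP}(1)$, which is $o_{\PP}(n^{1/2})$ on the partial-sum scale relevant for (i)--(ii) and $o_{\PP}(1)$ after the $1/\sqrt n$ normalization used in (iii)--(vi). Substituting into Theorem~\ref{BasicStrongApprox} then yields the claimed approximations in probability, with ``almost sure'' downgraded to ``in probability'' because of the additional random term.

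For the weighted assertions (vii)--(viii), the extra division by $g(k/n)$ is accommodated by upgrading the invariance principle to the sharper bound $\max_{1 \le k < n}|V_k - (k/n)V_n|/(\sqrt n\, g(k/n)) = O_{\PP}(1)$, which follows from the Brownian-bridge coupling of the scalar partial sums together with the a.s.\ finiteness of $\sup_{0<t<1}|B^0(t)|/g(t)$ guaranteed by~\eqref{AssumptionWeightFunction}. Combined with $W_n/n = O_{\PP}(1/\sqrt n)$, the weighted remainder is $O_{\PP}(1/\sqrt n) = o_{\PP}(1)$, exactly matching the tolerance in (vii)--(viii); the centering of $\wt{\matS}_{nn}$ inside $\wt{\calD}_n$ is absorbed in the same way.

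The main technical obstacle will be the uniform-in-$n$ control of the scalar (and weighted) partial sums $V_k, W_k$ in the triangular-array, change-point regime, since the coefficients depend on $n$ and, for $i > \tau$, switch from $\barr$ to $\carr$. I would handle this by re-running the martingale-approximation argument already used in Theorem~\ref{BasicStrongApprox} (cf.\ Lemma~\ref{LemmaMartingaleApprox}) but now applied to the scalar processes $Y_{ni}(\avb)$ and $Y_{ni}(\awb)$; this produces a Brownian coupling whose variance parameter is uniformly bounded precisely because the strengthened decay hypothesis enforces absolute summability of the scalar coefficients. The assumption that $(\Omega,\calA,\PP)$ is rich enough to carry an additional uniform random variable then allows this scalar coupling to be constructed jointly with the bivariate coupling of Theorem~\ref{BasicStrongApprox}, so the couplings compose and the claimed bounds follow.
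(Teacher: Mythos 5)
Your proposal is correct and takes essentially the same route as the paper: both reduce the centered statistics to the uncentered ones plus centering remainders, and both bound those remainders using the fact that the strengthened decay condition makes the scalar coefficients $\sum_{\nu} v_{n\nu} b_{nj}^{(\nu)}$ absolutely summable, so that $\vecv_n^\top \overline{\vecY}_n = O_{\PP}(n^{-1/2})$ and the partial sums of $\vecv_n^\top\vecY_{ni}$ are $O_{\PP}(n^{1/2})$. The only (cosmetic) difference is that you organize the remainder through the exact bridge identity so the $k\bar a\bar b$ term cancels and you are explicit about the maximal inequality needed for uniformity in $k$, whereas the paper works with per-observation remainders $R_{ni}$ and Markov's inequality.
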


The above theorems assume that the projection vectors $ \vecv_n $ and $ \vecw_n $ have uniformly bounded  $ \ell_1 $-norm. When standardizing by a homogenous estimator  $ \wh{\alpha}_n = \wh{\alpha}_n( \vecv_n, \vecw_n ) $, i.e. satisfying
\begin{equation}
	\label{ScalingProp}
	\wh{\alpha}_n( x \vecv_n, y \vecw_n ) = xy \wh{\alpha}_n( \vecv_n, \vecw_n ) 
\end{equation}
for all $ x, y > 0 $, one can relax the conditions on the projections $ \vecv_n, \vecw_n $.

\begin{theorem} 
	\label{ApproxGrowing}
	Suppose that $ \{ \epsilon_{ni} : i \in \Z, n \ge 1 \} $ satisfies Assumption (E). Assume that 
	\begin{equation}
		\label{AssumptionL2Projections}
		\sup_{n \ge 1} d_n^{-1/2} \| \vecv_n \|_{\ell_2}, 	\sup_{n \ge 1} d_n^{-1/2} \| \vecw_n \|_{\ell_2} < \infty
	\end{equation} 
	or	there are non-decreasing sequences $ \{ a_n \}, \{ b_n \} \subset (0,\infty ) $ with
	\begin{equation}
	\label{AssumptionL1Growing}
	\sup_{n \ge 1} a_n^{-1} \| \vecv_n \|_{\ell_1}, \sup_{n \ge 1} b_n^{-1} \| \vecw_n \|_{\ell_1} < \infty.
	\end{equation}
	Suppose that the estimator $ \wh{\alpha}_n = \wh{\alpha}_n( \vecv_n, \vecw_n ) $ used by $ T_n(g; \vecv_n, \vecw_n ) $ is ratio consistent and homogenous.
	Further, let $ \barr = \{ b_{nj}^{(\nu)} \} $ and $ \carr = \{ c_{nj}^{(\nu)} \} $  be coefficients satisfying Assumption (D). If the change-point model (\ref{CPM}) holds,
	then, under the construction of Theorem~\ref{BasicStrongApprox} with $ C_n n^{-\lambda} = o(1) $, (vi) holds and we have for any weight function $g $ satisfying (\ref{AssumptionWeightFunction})
	\begin{equation}
		\label{MainForWeightedCUSUM2}
		\left| T_n(g) - \max_{1 \le k < n} \frac{1}{g(k/n)} \bigg| \frac{m_n(k)}{\sqrt{n}}+ \overline{B}_n^0(k/n) \biggr| \right| = o_{\PP}(1),
	\end{equation}	
	where $ \overline{B}_n^0(t) = \alpha_n^{-1}(\barr) \overline{G}_n^0(t) $, $ t \in [0,1] $.
\end{theorem}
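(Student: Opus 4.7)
The plan is to reduce to Theorem~\ref{BasicStrongApprox} by a rescaling argument based on the bilinearity of $\vecv_n^\top\vecS_{nk}\vecw_n$ in $(\vecv_n,\vecw_n)$ together with the homogeneity (\ref{ScalingProp}) of $\wh{\alpha}_n$. First, Cauchy--Schwarz absorbs the $\ell_2$-assumption into the $\ell_1$ framework: $\|\vecv_n\|_{\ell_1}\le\sqrt{d_n}\,\|\vecv_n\|_{\ell_2}=O(d_n)$, so setting $a_n=b_n=d_n$ reduces (\ref{AssumptionL2Projections}) to (\ref{AssumptionL1Growing}). It therefore suffices to treat the case $\|\vecv_n\|_{\ell_1}\le C a_n$, $\|\vecw_n\|_{\ell_1}\le C b_n$, in which I introduce the normalized projections $\tilde{\vecv}_n=\vecv_n/a_n$, $\tilde{\vecw}_n=\vecw_n/b_n$, whose $\ell_1$-product is uniformly bounded, so (\ref{AssumptionL1Projections}) holds for them with $\eta=0$ and Theorem~\ref{BasicStrongApprox} is directly applicable.

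On the probability space provided by that theorem, (\ref{MainForWeightedCUSUM1}) applied to $(\tilde{\vecv}_n,\tilde{\vecw}_n)$ gives
\[
\max_{1\le k<n}\frac{1}{\sqrt{n}\,g(k/n)}\,\bigl|\tilde U_{nk}-\tfrac{k}{n}\tilde U_{nn}-\bigl(\tilde m_n(k)+\tilde G_n^0(k)\bigr)\bigr|=o_{\PP}(1),
\]
where tildes denote the quantities of Theorem~\ref{BasicStrongApprox} attached to $(\tilde{\vecv}_n,\tilde{\vecw}_n)$. Bilinearity of $\vecv_n^\top\vecS_{nk}\vecw_n$ and of the asymptotic covariance parameters entering (\ref{AsCovMatrix}) via Lemma~\ref{CorollaryBetaSq} yields the scaling identities $U_{nk}=a_nb_n\,\tilde U_{nk}$, $m_n(k)=a_nb_n\,\tilde m_n(k)$, and $\alpha_n(\barr;\vecv_n,\vecw_n)=a_nb_n\,\tilde{\alpha}_n(\barr)$; consequently the Gaussian process $G_n^0$ attached to $(\vecv_n,\vecw_n)$ may be realized as $a_nb_n\,\tilde G_n^0$. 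Multiplying the display by $a_nb_n$ therefore produces the analogous approximation for the unscaled statistics, with error $o_{\PP}(a_nb_n)$.

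The factor $a_nb_n$ is absorbed by the standardization. Homogeneity (\ref{ScalingProp}) gives $\wh{\alpha}_n(\vecv_n,\vecw_n)=a_nb_n\,\wh{\alpha}_n(\tilde{\vecv}_n,\tilde{\vecw}_n)$, and ratio consistency, combined with the standing assumption that the asymptotic variance $\tilde{\alpha}_n(\barr)$ is bounded away from zero, forces $\wh{\alpha}_n(\vecv_n,\vecw_n)$ to be of exact order $a_nb_n$ in probability. Dividing the rescaled display by $\wh{\alpha}_n(\vecv_n,\vecw_n)$ therefore collapses the error to $o_{\PP}(1)$; a final pass from $\wh{\alpha}_n$ to $\alpha_n(\barr)$ by ratio consistency, together with $\overline G_n^0(k/n)=G_n^0(k)/\sqrt{n}$ and $\overline B_n^0(t)=\alpha_n^{-1}(\barr)\overline G_n^0(t)$, identifies the main term as the right-hand side of (\ref{MainForWeightedCUSUM2}). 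Assertion (vi) of Theorem~\ref{BasicStrongApprox} under the relaxed projection conditions follows from the same scaling identity applied to part (vi) of that theorem at the tilde level.

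The main obstacle is that the approximation rate furnished by Theorem~\ref{BasicStrongApprox} at the original scale is only $o_{\PP}(a_nb_n)$, which in general does not vanish; it is precisely the interplay between the homogeneity of $\wh{\alpha}_n$ and the non-degeneracy of $\tilde{\alpha}_n(\barr)$ that cancels this growth after standardization. Beyond this, the remaining work is routine bookkeeping, provided the Gaussian process $G_n^0$ of Theorem~\ref{BasicStrongApprox} can be chosen equivariantly under $\vecv_n\mapsto\vecv_n/a_n$, $\vecw_n\mapsto\vecw_n/b_n$, which is immediate from the bilinearity of the covariance structure (\ref{AsCovMatrix}).
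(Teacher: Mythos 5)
Your proposal is correct and follows essentially the same route as the paper: normalize the projections to $\tilde{\vecv}_n=\vecv_n/a_n$, $\tilde{\vecw}_n=\vecw_n/b_n$ so that (\ref{AssumptionL1Projections}) holds, apply Theorem~\ref{BasicStrongApprox} at that scale, and use bilinearity of $D_{nk}$ together with the homogeneity (\ref{ScalingProp}) of $\wh{\alpha}_n$ — the paper phrases this as the exact invariance $T_n(g;\vecv_n,\vecw_n)=T_n(g;\tilde{\vecv}_n,\tilde{\vecw}_n)$ rather than your multiply-then-divide bookkeeping, and handles the $\ell_2$ case by the same $a_n=b_n=d_n$ reduction via Cauchy--Schwarz/Jensen.
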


By Theorem~\ref{BasicStrongApprox}, statistical properties of the CUSUM statistic $ C_n(g) $ can be approximated by those of  $ \max_{1 \le k < n} \frac{1}{\sqrt{n} g(k/n)} \left| m_n(k) + G_n^0(k) \right| $. In view of Theorem~\ref{ApproxGrowing}, for the standardized CUSUM statistic $ T_n(g) $ one replaces $ G_n^0 $ by a process which is a Browninan bridge with covariance function $ \min(s,t) -st$ up to $ \tau $ and  $ (\min(s,t) -st) \alpha_n^2(\carr) / \alpha_n^2( \barr) $ after the change. Especially, under the null hypothesis $ H_0 $ of no change, we have $ m_n(k) = 0 $, for all $k$ and $n$, and $ | \alpha^2_n( \barr) - \alpha^2( \carr ) | =o(1) $ by (\ref{ChangeCond1}) and Lemma~\ref{CorollaryBetaSq}. Then the asymptotics of the change-point procedures is governed by a standard Brownian bridge. 
Theorems~\ref{BasicStrongApprox}, \ref{ApproxGrowing} and  \ref{BasicStrongApproxCentered} (under the strenghtened decay condition) imply FCLTs.

\begin{theorem}
\label{FCLT}
	 (FCLT) 
	 If $ \beta_n^2( \barr, \carr ) \to \beta^2( \barr, \carr ) $, $ \alpha_n^2(\aarr) \to \alpha^2(\aarr) >0 $ for $ \aarr \in \{ \barr, \carr \} $, $ \Delta_n \to \Delta > 0 $ and $ \tau/n \to \vartheta \in (0,1) $, as $ n \to \infty $, then under the conditions of Theorem~\ref{BasicStrongApprox} (viii) or Theorem~\ref{ApproxGrowing} it holds
	\[
	  \calD_n^0 \Rightarrow \mu + \overline{G}^0, \qquad n \to \infty,
	\]
	with $ \mu(t) = t(1-\vartheta) \Delta \eins( t \le \vartheta ) + \vartheta(1-t) \Delta \eins( t > \vartheta) $, $ t \in [0,1] $, 
	in the Skorohod space $ D[0,1] $, for some Gaussian bridge process $ \overline{G}^0 $ defined on  $[0,1] $ with $ \Cov( \overline{G}^0(s), \overline{G}^0(t) ) = ( \min(s,t) - st) \alpha^2( \barr) $if $ s, t \le \vartheta $ or $ s \le \vartheta < t $,
	and $ \Cov( \overline{G}^0(s), \overline{G}^0(t) ) = ( \min(s-\vartheta, t-\vartheta) - st ) \alpha^2(\carr) $, if $ \tau \le s, t $.
	Further, if $ \vecv_n, \vecw_n $ are weighting vectors satsfying (\ref{AssumptionL1Projections}), (\ref{AssumptionL2Projections}) or (\ref{AssumptionL1Growing}) and if the constructions of Theorem~\ref{BasicStrongApprox} and Theorem~\ref{ApproxGrowing}, respectively, hold with $ C_n n^{-\lambda}  = o(1)$, 
	then for any weight function $g$ which satisfies (\ref{AssumptionWeightFunction}) we have 
	\[
	  T_n(g), \wt{T}_n(g) \Rightarrow \sup_{0 < t < 1} \frac{ |\mu(t) + B^0(t)| }{g(t)}, \qquad n \to \infty, \quad \text{in $ D[0,1] $.}
	\] 
\end{theorem}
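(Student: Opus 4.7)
My plan is to deduce the FCLT from the strong-approximation results already proved, so the work reduces to (a) replacing the approximating Gaussian processes by a single limiting one, and (b) applying the continuous mapping theorem for the weighted supremum functional.

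For the first assertion, I would start from Theorem~\ref{BasicStrongApprox}(viii), which gives
\[
  \sup_{t\in[0,1]} \bigl| \calD_n^0(t) - \mu_n(t) - \overline{G}_n^0( \trunc{nt}/n) \bigr| = o(1) \quad \text{a.s.}
\]
First I would show that $\mu_n \to \mu$ uniformly on $[0,1]$: since $\trunc{nt}/n \to t$ uniformly, $\tau/n\to\vartheta$ and $\Delta_n\to\Delta$, the two branches of $\mu_n$ converge to the corresponding branches of $\mu$, and the transition region $|k/n - \tau/n| \le 1/n$ has vanishing contribution because $\Delta_n$ is bounded. Next I would argue that $\overline{G}_n^0( \trunc{n\cdot}/n) \Rightarrow \overline{G}^0$ in $D[0,1]$. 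Because the processes are centered Gaussian, weak convergence reduces to convergence of finite-dimensional distributions plus tightness. The fdd convergence follows directly from (\ref{Cov_Gn0_prechange}): on $[0,\vartheta)$ and on $[\vartheta,1]$ the covariance $\Cov(\overline{G}_n^0(s),\overline{G}_n^0(t))$ converges to the stated limiting covariance, using $\tau/n\to\vartheta$, $\alpha_n^2(\barr)\to\alpha^2(\barr)$, $\alpha_n^2(\carr)\to\alpha^2(\carr)$, and a brief argument at the transition. Tightness in $D[0,1]$ follows from the standard Gaussian criterion: the increments satisfy $\EE |\overline{G}_n^0(t)-\overline{G}_n^0(s)|^2 \le C |t-s|$ uniformly in $n$ by the explicit form (\ref{Cov_Gn0_prechange}), which, together with Gaussianity, yields control of fourth moments and thus Billingsley's tightness criterion. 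Combining the uniform convergence of $\mu_n$ with the weak convergence of $\overline{G}_n^0$ via a Slutsky-type argument in $D[0,1]$ gives $\calD_n^0 \Rightarrow \mu + \overline{G}^0$.

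For the second assertion, I would invoke Theorem~\ref{ApproxGrowing} (and its centered counterpart Theorem~\ref{BasicStrongApproxCentered}) to write
\[
  T_n(g), \wt T_n(g) = \max_{1\le k<n} \frac{1}{g(k/n)} \Bigl| \frac{m_n(k)}{\sqrt n} + \overline{B}_n^0(k/n) \Bigr| + o_{\PP}(1),
\]
where $\overline{B}_n^0$ is a Gaussian bridge that by the same fdd-plus-tightness argument as above converges weakly to $B^0$ in $D[0,1]$ (the rescaling by $\alpha_n(\barr)$ yields the standard covariance $\min(s,t)-st$ on the pre-change block, while on the post-change block the factor $\alpha_n^2(\carr)/\alpha_n^2(\barr)\to 1$ under the present hypotheses, matching the limit stated). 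On the deterministic side, $n^{-1/2} m_n(\trunc{n\cdot}) \to \mu$ uniformly on $[0,1]$ by the same argument as for $\mu_n$. It remains to apply the continuous mapping theorem to the functional $\Phi: f \mapsto \sup_{0<t<1} |f(t)|/g(t)$.

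The main technical point, and what I expect to be the chief obstacle, is verifying that $\Phi$ is continuous at the limit process $\mu+B^0$ when the supremum is taken over the open interval and $g$ may vanish at the endpoints. Because $g$ satisfies (\ref{AssumptionWeightFunction}) with $0<\beta<1/2$, the law of the iterated logarithm guarantees that $|B^0(t)|/g(t)\to 0$ a.s.\ as $t\to 0$ or $t\to 1$, so the supremum is a.s.\ attained on a compact subinterval $[\delta,1-\delta]$ for some random $\delta>0$. On each such subinterval $\Phi$ is continuous on $D[0,1]$ in the uniform (and hence Skorohod) topology, and a standard truncation argument, combined with uniform control of $\sup_{t\in[0,\delta]\cup[1-\delta,1]} |\overline{B}_n^0(t)|/g(t)$ via the maximal inequality for Gaussian bridges, allows one to send $\delta\downarrow 0$ after taking the limit in $n$. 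This yields $\Phi(\calD_n^0/\wh\alpha_n)\Rightarrow \Phi(\mu+B^0)$, which is the claim.
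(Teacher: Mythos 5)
Your overall architecture --- strong approximation to reduce to a drift plus a Gaussian process, uniform convergence of the drift, weak convergence of the Gaussian part, then continuous mapping for the weighted supremum --- is sound and close in spirit to the paper's proof, but the paper gets there by a shorter route. Instead of proving finite-dimensional convergence plus tightness for $\overline{G}_n^0$, the paper observes that for each fixed $n$ the suitably rescaled process is \emph{equal in distribution} to a Brownian bridge on each of the two blocks $[0,\tau]$ and $(\tau,1]$, so no limit theorem for the Gaussian part is needed at all; the only remaining issue is the discretization $\trunc{nt}/n$ versus $t$, which is absorbed by L\'evy's modulus of continuity. This also quietly repairs a small defect in your tightness step: for the \emph{discretized} process the bound $\EE|\overline{G}_n^0(\trunc{nt}/n)-\overline{G}_n^0(\trunc{ns}/n)|^2\le C|t-s|$ fails when $s<k/n<t$ with $t-s$ arbitrarily small (the increment then has variance of order $1/n$, not $|t-s|$); one must run the moment bound on the continuous-time version and treat the grid error separately, which is exactly what L\'evy's modulus accomplishes. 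On the other hand, your explicit verification that $f\mapsto\sup_{0<t<1}|f(t)|/g(t)$ is continuous at the limit, via the LIL behaviour of $B^0/g$ near the endpoints, is more careful than the paper's terse appeal to \cite{CsoergoeHorvath1993} and the continuous mapping theorem.

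The one substantive error is your claim that $\alpha_n^2(\carr)/\alpha_n^2(\barr)\to 1$ ``under the present hypotheses.'' The hypotheses only give $\alpha_n^2(\aarr)\to\alpha^2(\aarr)>0$ separately for $\aarr\in\{\barr,\carr\}$, together with $\Delta_n\to\Delta>0$; since $\Delta>0$ forces $\barr\ne\carr$, there is no reason to have $\alpha^2(\barr)=\alpha^2(\carr)$, and the ratio converges to $\alpha^2(\carr)/\alpha^2(\barr)$, which is in general different from $1$. Consequently the Gaussian part of the limit of $T_n(g)$ is not a standard Brownian bridge on all of $[0,1]$ but the two-regime bridge whose covariance is $\min(s,t)-st$ for $s,t\le\vartheta$ or $s\le\vartheta<t$ and $(\min(s,t)-st)\,\alpha^2(\carr)/\alpha^2(\barr)$ for $\vartheta<s,t$ --- precisely the process described in the discussion following Theorem~\ref{ApproxGrowing}, and the way the symbol $B^0$ must be read in the statement. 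As written, your argument reaches the stated conclusion only through this false intermediate step; the fix is to drop the claim and carry the two-regime covariance through to the limit, exactly as you do for the first assertion.
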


\subsection{Multivariate CUSUM approximation}

Let us now consider $L = L_n \in \N $ CUSUM statistics $ \vecC_n(g) = ( C_{n1}(g), \ldots, C_{nL_n}(g) )^\top $ where
\[
C_{nj}(g) = C_{n}( \vecv_{nj}, \vecw_{nj}; g ) = \max_{1 \le k < n} \frac{1}{\sqrt{n} g(k/n) } 
\left| \vecv_{nj}^\top \left( \matS_{nk} - \frac{k}{n} \matS_{nn}   \right) \vecw_{nj} \right|,
\]
$ j \in \{1, \ldots, L_n\} $, defined for $ L_n $ pairs $ (\vecv_{nj}, \vecw_{nj} ) $, $ j \in  \{1, \ldots, L_n \} $, of projection vectors. When using no weights, i.e., $ g(x) = 1 $, $ x \in [0,1] $, the corresponding quantities are denoted $ \vecC_n = ( C_{n1}, \ldots, C_{nL_n} )^\top $.

Let  $ \vecB_n(t) = ( \vecB_n^{(1)}(t), \ldots, \vecB_{n}^{(2L_n)} )^\top $, $ t \ge 0 $, be a $2L_n$-dimensional mean zero Brownian motion with covariance matrix 
\begin{equation}
\label{MultivCovStructure}
 \bfSigma^\vecB_n = \left( \bfSigma^\vecB_{nij} \right)_{1 \le i \le L_n \atop 1 \le j \le L_n} 
\end{equation}
with  blocks
\[
\bfSigma^\vecB_{nij} = \left( \begin{array}{cc} \beta_n^2(\barr, i, \barr, j) & \beta_n^2( \barr, i, \carr, j ) \\  \beta_n^2( \carr, i, \barr, j ) & \beta_n^2( \carr, i, \carr, j) \end{array} \right), \qquad 1 \le i,j \le L_n, 
\]
where, for brevity, 
$
\beta_n^2( \barr, i, \carr, j ) = L_n^{-\iota} \beta_n^2( \barr, \vecv_{ni}, \vecw_{ni}, \carr, \vecv_{nj}, \vecw_{nj} )
$ with  $ \iota = \eins( L_n \to \infty ) $, $ i,j  \in \{1, \ldots, L\} $. Also put $ \alpha_n^2(\aarr, i) = L_n^{-\iota} \beta_n^2( \aarr, \vecv_{ni}, \vecw_{ni},  \aarr, \vecv_{ni}, \vecw_{ni}) $, $ \aarr \in \{ \barr, \carr \} $, $ i  \in \{1, \ldots, L\} $, see (\ref{FormulaBetaSq}). Define the processes
\begin{align*}
\vecG_n(t) &= \vecB_n^{(1)} \eins( t \le \tau ) + [ \vecB_n^{(1)}( \tau ) + ( \vecB_n^{(2)}(t ) - \vecB_n^{(2)}(\tau) ] \eins( t > \tau ), 
\qquad t \ge 0, \\
  \vecG_n^0(k) &= \vecG_n(k) - \frac{k}{n} \vecG_n(n), \qquad k \le n, n \ge 1,
\end{align*}
where $ \vecB_n^{(1)}(t) = \left( \vecB_{n,2j-1}(t)  \right)_{j=1}^{L_n} $, $ \vecB_n^{(2)}(t) = \left( \vecB_{n,2j}(t) \right)_{j=1}^{L_n} $
and $ \vecG_n^0(k) = \left( \vecG_{nj}^0(k)  \right)_{j=1}^{2L_n} $.

\begin{theorem}
\label{MultivCUSUMApprox}
	Suppose that $ \{ \epsilon_{ni} : i \in \Z, n \ge 1 \} $ satisfies Assumption (E). 
 Let $ \vecv_{nj}, \vecw_{nj} $, $ j \in \{1, \ldots, L_n\} $, be weighting vectors satisfying (\ref{AssumptionL1Projections}) uniformly in $ j $, and let $ \barr = \{ b_{nj}^{(\nu)} \} $ and $ \carr = \{ c_{nj}^{(\nu)} \} $  be coefficients satisfying Assumption (D). Then, under the change-point model  (\ref{CPM}), one can redefine, for each $n $, on a new probability space, the vector time series together with a $2L_n$-dimensional mean zero Brownian motion $ \vecB_n = ( \vecB_{ nj} )_{j=1}^{2L_n} $ with covariance function given by (\ref{MultivCovStructure}), such that
 \begin{equation}
 \label{StrongApproxMultiv}
   \left\|  L_n^{-\iota/2}\vecC_n - \left( \max_{1 \le k < n} \frac{1}{L_n^{\iota/2}\sqrt{n}} \left| m_{nj}(k) + \vecG_{nj}^0(k) \right|_2 \right)_{j=1}^{L_n} \right\|_\infty \le 6 \sqrt{2} C_n. n^{-\lambda},
 \end{equation}
and for a weight function $ g $ satisfying (\ref{AssumptionWeightFunction}), for any $ \delta > 0 $
 \begin{equation}
 \label{StrongApproxMultivWeighted}
 \max_{j \le L_n} \PP \left( \left| L_n^{-1/2} C_n( \vecv_{nj}, \vecw_{nj}; g ) - \max_{1 \le k < n} \frac{1}{\sqrt{n}g(k/n)} | m_{nj}( k ) - \vecG_{nj}^0(k) | \right| > \delta \right) = o(1),
 \end{equation}
 where $ m_{nj}(k) = \frac{k(n-\tau)}{n} \Delta_n(j) \eins(  k \le \tau ) +  \tau \frac{n-k}{n} \Delta_n(j)  \eins( \tau < k \le n)  $ with $ \Delta_n(j) = (\vecv_{nj}^\top \bfSigma_{n0} \vecw_{nj} - \vecv_{nj}^\top \bfSigma_{n1} \vecw_{nj} ) $, $ j \in  \{1, \ldots, L_n\} $. 
 \end{theorem}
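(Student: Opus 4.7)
The plan is to extend the single-CUSUM argument behind Theorem~\ref{BasicStrongApprox} to the $L_n$-dimensional setting by stacking the pair-wise two-dimensional constructions into a single $2L_n$-dimensional one and then invoking a multivariate strong invariance principle of growing dimension. For each $j\in\{1,\ldots,L_n\}$ I would first introduce the two-dimensional partial sums $\vecU_{nk,j}$ defined exactly as in (\ref{Def_vecU}) but with $(\vecv_n,\vecw_n)$ replaced by $(\vecv_{nj},\vecw_{nj})$, and the centered versions $\vecD_{nk,j}=\vecU_{nk,j}-\EE(\vecU_{nk,j})$. Stacking these over $j$ yields a $2L_n$-dimensional process $(\vecD_{nk,j})_{j=1}^{L_n}$ whose increments are $\calF_{nk}$-measurable. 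Applying Lemma~\ref{LemmaMartingaleApprox} coordinate-wise, and using that (\ref{AssumptionL1Projections}) is assumed uniformly in $j$, produces a $2L_n$-dimensional $\calF_{nk}$-martingale approximation with a coordinate-wise error controlled uniformly in $j$.

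The key step is then to construct, on a suitably enlarged probability space, a $2L_n$-dimensional mean zero Brownian motion $\vecB_n$ with covariance matrix $\bfSigma_n^{\vecB}$ given by (\ref{MultivCovStructure}) such that the martingale approximation is approximated by $\vecB_n$ with an $\ell_\infty$-error of order $C_n n^{1/2-\lambda}$ after the $L_n^{-\iota/2}$ rescaling. The identification of the covariance blocks is produced by applying Lemma~\ref{CorollaryBetaSq} to each pair of coordinates, yielding exactly the entries $\beta_n^2(\barr,i,\barr,j)$, $\beta_n^2(\barr,i,\carr,j)$, $\beta_n^2(\carr,i,\barr,j)$ and $\beta_n^2(\carr,i,\carr,j)$. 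The doubling to $2L_n$ coordinates is forced by the change-point regime, exactly as in the proof of Theorem~\ref{BasicStrongApprox}, because the segments $i\le\tau$ and $i>\tau$ involve the pre- and post-change coefficient arrays and thus contribute asymptotically independent Brownian pieces $\vecB_n^{(1)}$ and $\vecB_n^{(2)}$. Having this multivariate strong approximation in hand, the bridge form follows from the algebraic identity
\[
  \vecv_{nj}^\top\left(\matS_{nk}-\tfrac{k}{n}\matS_{nn}\right)\vecw_{nj} = \left(D_{nk,j} - \tfrac{k}{n} D_{nn,j}\right) + m_{nj}(k),
\]
which, combined with $\vecG_{nj}^0(k)=\vecG_{nj}(k)-(k/n)\vecG_{nj}(n)$, dividing by $\sqrt{n}\,L_n^{\iota/2}$, taking absolute values, then $\max_{1\le k<n}$, and finally applying $|\max_k|a_k|-\max_k|b_k||\le\max_k|a_k-b_k|$ in the $\ell_\infty$-norm over $j\in\{1,\ldots,L_n\}$, delivers (\ref{StrongApproxMultiv}).

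For the weighted statement (\ref{StrongApproxMultivWeighted}) I would proceed in the spirit of the passage from (v) to (\ref{MainForWeightedCUSUM}) in Theorem~\ref{BasicStrongApprox}: for a fixed $j$ decompose the supremum into the bulk $k/n\in[\varepsilon,1-\varepsilon]$, on which the unweighted approximation and a lower bound $g(k/n)\ge g_0(\varepsilon)>0$ furnish an $o(1)$ bound that depends only on $\varepsilon$ and the coordinate-wise error in (\ref{StrongApproxMultiv}), and the boundary regions $k/n\in[0,\varepsilon)\cup(1-\varepsilon,1]$, which are controlled via (\ref{AssumptionWeightFunction}), tightness of $\sup|B^0(t)|/g(t)$, and the moment bounds on the partial-sum increments $\vecD_{nk,j}$. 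Letting $\varepsilon\to 0$ slowly after $n\to\infty$ gives, for each $\delta>0$, a probability bound $o(1)$ which one checks to be uniform in $j\le L_n$ thanks to the uniformity of the constants entering the martingale approximation under (\ref{AssumptionL1Projections}).

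The main obstacle I anticipate is precisely the multivariate strong approximation of growing dimension $2L_n$: one needs to guarantee that the approximation error between the stacked martingale and $\vecB_n$ does not pick up polynomial factors in $L_n$ that would swamp the $L_n^{-\iota/2}$ normalization, and that the moment control from Assumption~(E) together with the $\ell_1$-bounds on $(\vecv_{nj},\vecw_{nj})$ transports uniformly across the $2L_n$ coordinates. This is essentially the technical heart of the extension of \cite{StelandSachs2018} to the change-point regime; the remaining steps, including the algebraic reduction from partial sums to CUSUMs and the handling of the weight function, are routine once that construction is secured.
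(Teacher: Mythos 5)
Your outline follows the same route as the paper -- stack the pair-wise centered partial sums into a $2L_n$-dimensional process, approximate by a martingale via Lemma~\ref{LemmaMartingaleApprox}, obtain a joint Gaussian approximation with covariance (\ref{MultivCovStructure}) identified through Lemma~\ref{CorollaryBetaSq}, and then reduce to the CUSUMs by the algebraic identity and the reverse triangle inequality in $\ell_\infty$. However, the step you explicitly defer (``the main obstacle I anticipate is precisely the multivariate strong approximation of growing dimension $2L_n$'') is the actual content of the proof, so as written the argument has a genuine gap at its core: you assert that a dimension-free invariance principle exists without showing why the error does not degrade with $L_n$.

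The paper closes this gap by applying Philipp's Hilbert-space almost sure invariance principle to the $L_n^{-\iota/2}$-rescaled increments $L_n^{-1/2}\bfxi_i^{(n)}$ in $\R^{2L_n}$ and re-verifying conditions (I)--(III) so that all bounds are uniform in the dimension. Concretely: (I) follows from the norm equivalence $\| \cdot \|_2 \le \sqrt{L_n}\,\| \cdot \|_\infty$ together with Jensen's inequality, the factor $L_n$ being exactly cancelled by the $L_n^{-1/2}$ scaling; (II) picks up a factor $L_n^{1/2}$ from the same norm comparison, again absorbed by the rescaling; and (III) -- the delicate point -- uses the elementary quadratic-form bound $|\sum_{i,j} a_{ij} x_i x_j| \le L_n \|\vecx\|_2^2 \max_{i,j}|a_{ij}|$ together with Lemma~\ref{KonvKovBeta} applied to each pair of coordinates and, crucially, the $L_n^{-\iota}$ normalization already built into the definition of the covariance entries $\beta_n^2(\cdot,i,\cdot,j)$ in (\ref{MultivCovStructure}), so that the operator-norm distance between the conditional covariance operator and the limit operator is $O((n')^{-\theta'/2})$ with no residual dependence on $L_n$. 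Without this bookkeeping the ``polynomial factors in $L_n$'' you worry about would indeed appear. Your treatment of the weighted version and the reduction to (\ref{StrongApproxMultiv}) match the paper (the LIL tail bound and the H\'ajek--R\'enyi step are uniform in $j$ because the martingale-approximation constants are), and your remark that the pre- and post-change segments contribute the two Brownian blocks is correct, though note that $\vecB_n^{(1)}$ and $\vecB_n^{(2)}$ are not independent -- only the post-$\tau$ increments of $\vecB_n^{(2)}$ are independent of $\vecB_n^{(1)}(\tau)$; their joint law carries the full cross-covariance $\beta_n^2(\barr,i,\carr,j)$.
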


Observe that under $H_0$ the asymptotic covariance matrix of the approximating process and hence of $ \vecC_n $ is given by $ \bfSigma^\vecB_{n,H_0} = \Var( \vecB_n^{(1)} ) = \bfSigma_n^\vecB( \barr ) $, whose diagonal is given by the elements $ \alpha_n^2(\barr,1), \dots, \alpha_n^2(\barr,L_n) )^\top $  and off-diagonal elements by $ \sigma_n^2( \barr, i, \barr, j )  $, $ 1 \le i \not= j \le L_n $. For fixed $L$ the results of the next section show that $ \bfSigma^\vecB_n( \barr )  $ can be estimated consistently, providing a justification for the test (\ref{TestMultiv}) when $ \bfSigma^\vecB_n( \barr ) $ is regular. 

\subsection{Full-sample and stopped-sample estimation of $ \alpha_n^2(\barr) $ and $\beta_n^2(\barr, \carr )$}

Let us now discuss how to estimate the parameter $ \alpha_n^2( \barr ) $ for one pair $ (\vecv_n, \vecw_n) $ of projection vectors, which is used in the change-point test statistic for standardization, and the  asymptotic covariance parameters $ \beta_n^2( j,k ) = \beta_n^2( \barr, \vecv_{nj}, \vecw_{nj}, \barr, \vecv_{nk}, \vecw_{nk} ) $ for two pairs $( \vecv_{nj}, \vecw_{nj}) $ and $ (\vecv_{nk}, \vecw_{nk} ) $, which arise in the multivariate test for a set of projections.  If there is no change, one may use the proposal of \cite{StelandSachs2017}. But under a change these estimators are inconsistent. The common approach is therefore to use a learning sample for estimation. Alternatively, one may estimate the change-point and use the data before the change. The consistency of that approach follows quite easily when establishing a uniform weak of large numbers of the sequential (process) version of the estimators which uses the first $k$ observations, $ \vecY_{n1}, \ldots, \vecY_{nk} $, where $ k $ is a fraction of the sample size $n$ so that $ k = \trunc{nu} $ for $ u \in (0,1] $:

Fix $ 0 < \varepsilon < 1 $ and define for $ u \in [\varepsilon,1] $
\[
\wh{\alpha}_n^2(u)  = \wh{\Gamma}_n(u; 0) + 2 \sum_{h=1}^m w_{mh} \wh{\Gamma}_n( u; h ),
\]
where
\[
\wh{\Gamma}_n(u;h) = \frac{1}{\trunc{nu}} \sum_{i=1}^{\trunc{nu}-h} [ \vecv_n^\top \vecY_{ni} \vecw_n^\top \vecY_{ni} - \wh{c}_{\trunc{nu}}][ \vecv_n^\top \vecY_{n,i+|h|} \vecw_n^\top \vecY_{n,i+|h|} - \wh{c}_{\trunc{nu}} ],
\]
for $ |h| \le m $, with $ \wh{c}_\trunc{nu} = \trunc{nu}^{-1} \sum_{j=1}^{\trunc{nu}}  \vecv_n^\top \vecY_{nj} \vecw_n^\top \vecY_{nj }  $. The estimators
$ \wh{\beta}_n^2(j,k) $ and $ \wh{\Gamma}_n(u;h,j,k)  $, $ 1 \le j,k \le K $, corresponding to two pairs of projection vectors, are defined analogously, i.e.,
\begin{equation}
\label{DefSigmaTwoProj}
  \wh{\beta}_n^2(j,k) 
 = \wh{\Gamma}_n(u; 0, j, k) + 2 \sum_{h=1}^m w_{mh} \wh{\Gamma}_n( u; h, j, k )
\end{equation}
with
\begin{equation}
\label{DefSigmaTwoProj}
  \wh{\Gamma}_n(u;h,j,k) = \frac{1}{\trunc{nu}} \sum_{i=1}^{\trunc{nu}-h} [ \vecv_{nj}^\top \vecY_{ni} \vecw_{nj}^\top \vecY_{ni} - \wh{c}_{\trunc{nu},j}][ \vecv_{nk}^\top \vecY_{n,i+|h|} \vecw_{nk}^\top \vecY_{n,i+|h|} - \wh{c}_{\trunc{nu},k} ]
\end{equation}
for $ 1 \le i, j \le L $ with $ \wh{c}_{\trunc{nu},j} = \trunc{nu}^{-1} \sum_{i=1}^{\trunc{nu}}  \vecv_{nj}^\top \vecY_{ni} \vecw_{nj}^\top \vecY_{ni } $, $ j \in \{1, \ldots, L\} $.

The weights are often defined through a kernel function $w(x) $ via $ w_{mh} = w( h/b_m ) $ for some bandwidth parameter $ b_m $. For a brief discussion of common choices see \cite{StelandSachs2017}. 

The following theorem establishes the uniform law of large numbers. Especially, it shows that $  \wh{\alpha}_n^2( \ell/n ) $ is consistent for $ \alpha^2( \barr) $ if $ \ell \le \tau $, whereas for $ \ell > \tau $ a convex combination of $\alpha^2( \barr) $  and $ \alpha^2( \carr) $ is estimated. A similar result applies to the estimator of the asymptotic covariance parameter.

\begin{theorem} 
	\label{LLN_EstAlpha_Uniform}
		Assume that $ m \to \infty $ with $ m^2/n = o(1) $, as $ n \to \infty $, and the weights $ \{ w_{mh} \} $ satisfy
	\begin{itemize}
		\item[{\rm (i)}]  $ w_{mh} \to 1 $, as $ m \to \infty $, for all $ h \in \Z $, and
		\item[{\rm (ii)}] $ 0 \le w_{mh} \le W < \infty $, for some constant $W$, for all $m \ge 1 $, $ h \in \Z $.
	\end{itemize}
	If the innovations $ \epsilon_{ni} = \epsilon_i $ are i.i.d. with $ \EE( \epsilon_1^8 ) < \infty $, 
	$ c_{nj}^{(\nu)} = c_j^{(\nu)} $, for all $j$ and $ n \ge 1 $, satisfy the decay condition
	\[
	\sup_{1 \le j} | c_j^{(\nu)} | \ll (j \vee 1)^{-(1+\delta)}
	\]
	for some $ \delta > 0 $, and $ \vecv, \vecw \in \ell_1 $, then under the change-in-coefficients model (\ref{CPM})  with $ \tau = \trunc{n \vartheta } $, 
	$ \vartheta \in (0,1) $,  it holds for any $ 0 < \varepsilon < \vartheta $ 
	\[
	\sup_{u \in [\varepsilon, 1]} | \wh{\alpha}_n^2(u) - \alpha^2(u) | \stackrel{\PP}{\to} 0,
	\]
	as $ n \to \infty $, where
	$
	\alpha^2(u) = \alpha^2( u; \barr, \carr)  = \eins(u \le \vartheta) \alpha^2(\barr) + \eins( u > \vartheta ) \bigl( (\vartheta/u) \alpha^2(\barr) + (1-\vartheta/u) \alpha^2(\carr) \bigr),
	$
	for $ u \in  [\varepsilon, 1] $. Further, 
	\[
		\sup_{u \in [\varepsilon, 1]} | \wh{\beta}_n^2(u,j,k) - \beta^2(u,j,k) | \stackrel{\PP}{\to} 0,
	\]
	where for $ u \in  [\varepsilon, 1] $
	$
\beta^2(u,j,k) = \beta^2( u; j, k, \barr, \carr)  = \eins(u \le \vartheta) \beta^2(\barr, j, k) + \eins( u > \vartheta ) \bigl( (\vartheta/u) \beta^2(\barr,j,k) + (1-\vartheta/u) \beta^2(\carr,j,k) \bigr),
	$ for $ 1 \le j, k \le L $, as defined in Lemma~\ref{CorollaryBetaSq}.
\end{theorem}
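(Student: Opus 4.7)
The plan is to establish the uniform weak law in three stages: reduce to uncentered sums, prove a uniform law of large numbers over $u$ for each fixed lag $h$, and control the sum over $h\le m$ using the weight conditions together with summability of the autocovariance of the bilinear process $X_{ni}=\vecv^\top\vecY_{ni}\,\vecw^\top\vecY_{ni}$. Under $\EE\epsilon_1^8<\infty$ and the decay condition on $c_j^{(\nu)}$, one shows that $\EE X_{ni}^2$ is uniformly bounded and $h\mapsto\Cov(X_{n,1},X_{n,1+h})$ is absolutely summable on both the pre- and post-change stretches. A standard maximal inequality for partial sums of weakly dependent sequences with summable autocovariances then yields $\sup_{u\in[\varepsilon,1]}|\wh c_{\trunc{nu}}-c(u)|=o_\PP(1)$, where $c(u)=\vecv^\top\bfSigma_{n0}\vecw$ for $u\le\vartheta$ and $c(u)=(\vartheta/u)\vecv^\top\bfSigma_{n0}\vecw+(1-\vartheta/u)\vecv^\top\bfSigma_{n1}\vecw$ for $u>\vartheta$. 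So the centering term contributes only a uniform $o_\PP(1)$ error and we are reduced to the uncentered statistic $\wh\Gamma_n^\circ(u;h):=\trunc{nu}^{-1}\sum_{i=1}^{\trunc{nu}-h}X_{ni}X_{n,i+|h|}$.

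For each fixed $h$, I would prove $\sup_{u\in[\varepsilon,1]}|\wh\Gamma_n^\circ(u;h)-\Gamma^\circ(u;h)|\stackrel{\PP}{\to}0$, with $\Gamma^\circ(u;h)$ the appropriate convex combination of pre- and post-change second moments at lag $h$. Pointwise convergence at fixed $u$ follows from splitting the sum into indices $i\le\tau$ and $i>\tau$, on each of which $X_{ni}X_{n,i+|h|}$ is strictly stationary with summable autocovariances, and noting that the $O(h)$ cross-terms straddling $\tau$ contribute $O(h/n)$ uniformly for $h\le m$. Uniformity in $u$ is obtained by chaining: on a grid of width $1/N$, pointwise LLN combined with the oscillation bound $\EE|S_k-S_{k'}|^2\ll|k-k'|$ for the partial sums $S_k=\sum_{i=1}^{k-h}X_{ni}X_{n,i+|h|}$ yields uniform convergence after $N\to\infty$. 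The discontinuity of the limit at $u=\vartheta$ causes no trouble because the empirical process has a matching one-step jump at $u=\tau/n$ and $|\tau/n-\vartheta|\le 1/n$.

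Summation over $h$ is then a standard Bartlett-type argument: the decay $|c_j^{(\nu)}|\ll(j\vee 1)^{-1-\delta}$ implies $|\Cov(X_1,X_{1+h})|$ is summable, so $\sum_{h>m}|\Gamma^\circ(u;h)|\to 0$ as $m\to\infty$. Combining this with $w_{mh}\to 1$ for each fixed $h$, the uniform bound $w_{mh}\le W$, and the growth constraint $m^2/n\to 0$ that controls the aggregate of the $O(m/n)$ boundary contributions across all $h\le m$, one obtains $\sup_{u\in[\varepsilon,1]}|\wh\alpha_n^2(u)-\alpha^2(u)|=o_\PP(1)$. The limit is identified with the stated convex combination via Lemma~\ref{CorollaryBetaSq}. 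The extension to $\wh\beta_n^2(u,j,k)$ is immediate, working with $X_{ni}^{(j)}X_{n,i+|h|}^{(k)}$ where $X_{ni}^{(j)}=\vecv_{nj}^\top\vecY_{ni}\vecw_{nj}^\top\vecY_{ni}$; all the moment and decay bounds have the same structure.

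The main obstacle will be the joint control over $u$ and $h\le m$ near the change-point: the chaining step has to be carried out with oscillation bounds depending on $h$, and these errors must be summed against the weights $w_{mh}$ without blowing up. The three ingredients making this work are the eighth-moment budget, which provides a Cauchy--Schwarz-type bound on the second moment of $X_{ni}X_{n,i+|h|}$ that is uniform in $h$; the alignment $|\tau/n-\vartheta|\le 1/n$, which forces the empirical and limiting jumps to coincide and so rescues uniform convergence despite the discontinuity; and the condition $m^2/n=o(1)$, which bounds the cumulative contribution of all boundary cross-terms and chaining increments simultaneously.
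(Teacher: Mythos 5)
Your proposal follows essentially the same route as the paper's proof: the same Bartlett-type decomposition into a bias term (the expectation of the lag-$h$ sample autocovariance converging to the stated convex combination of pre- and post-change autocovariances, with the terms straddling $\tau$ contributing $O(h/n)$), a fluctuation term controlled by a moment bound of order $n^{-1/2}$ uniform in $|h|\le m$ so that the sum over lags is $O(m/\sqrt{n})=o(1)$ by $m^2/n\to 0$, a weight-convergence term handled by $w_{mh}\to 1$ together with boundedness and dominated convergence, a tail term killed by absolute summability of the autocovariances, and an $o_{\PP}(1)$ centering correction, with the limit identified through Lemma~\ref{CorollaryBetaSq} exactly as in the paper. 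The only notable difference is that you make the uniformity in $u$ explicit via a grid-plus-oscillation (chaining) argument, whereas the paper bounds $\sup_{u}$ of the $L_1$-norm of the fluctuation directly; both rest on the same underlying moment estimates.
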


Let us now suppose we are given a consistent estimator $ \wh{\tau}_n $ of the unknown change-point; in the next section we make a concrete proposal. In order to estimate the parameter $ \alpha^2(\barr) $ it is natural to use the above estimator using all observations classified by the estimator as belonging to the pre-change period. This means, we estimate $ \alpha^2(\barr) $ by $ \wh{\alpha}_{\wh{\tau}_n}^2 $.
The following result shows that this estimator is consistent under weak conditions.

\begin{theorem} 
	\label{Alpha2Consistency}
	Suppose that $ \wh{\tau}_n $ is an estimator of $ \tau $ satisfying $ \wh{\tau}_n / n \in [\varepsilon,1] $ a.s. and 
	$
	\left| \frac{\wh{\tau}_n}{n} - \vartheta \right| \stackrel{\PP}{\to} 0,
	$
	as $ n \to \infty $. Then
	\[
	| \wh{\alpha}_{\wh{\tau}_n}^2 - \alpha^2(\barr) |  \stackrel{\PP}{\to} 0, \qquad n \to \infty.
	\]
\end{theorem}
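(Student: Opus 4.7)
The plan is to reduce the statement to Theorem~\ref{LLN_EstAlpha_Uniform} via a standard triangle inequality, using the continuity of the limit function $\alpha^2(u)$ at $u=\vartheta$. Concretely, I would write
\[
  | \wh{\alpha}_{\wh{\tau}_n}^2 - \alpha^2(\barr) |
  \le \underbrace{| \wh{\alpha}_{n}^2(\wh{\tau}_n/n) - \alpha^2(\wh{\tau}_n/n) |}_{=: A_n}
   + \underbrace{| \alpha^2(\wh{\tau}_n/n) - \alpha^2(\barr) |}_{=: B_n},
\]
interpreting $\wh{\alpha}_{\wh{\tau}_n}^2$ as $\wh{\alpha}_{n}^2(u)$ evaluated at the random point $u=\wh{\tau}_n/n$. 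Each term is then handled separately.

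For $A_n$: since $\wh{\tau}_n/n \in [\varepsilon,1]$ almost surely by assumption, I bound $A_n \le \sup_{u \in [\varepsilon,1]} |\wh{\alpha}_n^2(u) - \alpha^2(u)|$, and this supremum tends to zero in probability by Theorem~\ref{LLN_EstAlpha_Uniform}. This is the key step where the uniform (process-version) law of large numbers is needed, rather than a pointwise one, because the argument of $\wh{\alpha}_n^2(\cdot)$ is itself data-dependent.

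For $B_n$: the function $u \mapsto \alpha^2(u)$ defined in Theorem~\ref{LLN_EstAlpha_Uniform} is continuous on $[\varepsilon,1]$ and, in particular, at $u=\vartheta$, where the two pieces of the definition match: on the left we have the constant $\alpha^2(\barr)$, and on the right $\lim_{u \downarrow \vartheta}\bigl( (\vartheta/u)\alpha^2(\barr) + (1-\vartheta/u)\alpha^2(\carr) \bigr) = \alpha^2(\barr)$. Hence $\alpha^2(\vartheta)=\alpha^2(\barr)$ and the continuous mapping theorem, applied to the consistent estimator $\wh{\tau}_n/n \stackrel{\PP}{\to} \vartheta$, yields $\alpha^2(\wh{\tau}_n/n) \stackrel{\PP}{\to} \alpha^2(\vartheta) = \alpha^2(\barr)$, so $B_n = o_\PP(1)$.

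Combining the two bounds finishes the proof. I don't expect a real obstacle: the non-trivial analytic work is already contained in Theorem~\ref{LLN_EstAlpha_Uniform}. The only subtle point worth emphasising is that continuity of $\alpha^2(\cdot)$ at the change-point fraction $\vartheta$ is what makes the estimator robust to the sign of $\wh{\tau}_n - \tau$, so that we do not need to condition on the event $\{\wh{\tau}_n \le \tau\}$ in order to recover $\alpha^2(\barr)$. The same argument, using the second part of Theorem~\ref{LLN_EstAlpha_Uniform}, would also give consistency of $\wh{\beta}_{\wh{\tau}_n}^2(j,k)$ for $\beta^2(\barr,j,k)$, although this is not explicitly claimed here.
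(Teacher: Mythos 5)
Your proof is correct and follows exactly the route the paper intends: the paper gives no separate proof of Theorem~\ref{Alpha2Consistency}, treating it as an immediate consequence of the uniform law of large numbers in Theorem~\ref{LLN_EstAlpha_Uniform} (as announced in the text preceding that theorem), which is precisely your triangle-inequality reduction combined with the continuity of $u \mapsto \alpha^2(u)$ at $u=\vartheta$. Your observation that $\lim_{u \downarrow \vartheta} \bigl( (\vartheta/u)\alpha^2(\barr) + (1-\vartheta/u)\alpha^2(\carr) \bigr) = \alpha^2(\barr)$, so that no conditioning on the sign of $\wh{\tau}_n - \tau$ is needed, is the right way to fill in the one detail the paper leaves implicit.
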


\section{Change-point estimation}
\label{Sec:ChangePoint}

In view of the change-point test statistic studied in the previous section, it is natural to estimate the change-point $ \widehat{\tau}_n $ by
\[
\wh{\tau}_n = \argmax_{1  \le k < n} | \wh{\calU}_n(k) |, \qquad
\wh{\calU}_n(k) =  \frac{1}{g(k/n) n} \left(  U_{nk} - \frac{k}{n} U_{nn} \right), \qquad 1 \le k \le n, n \ge 1.
\]
 (By convention, $ \argmax_{x \in \mathcal{D}} f(x) $ denotes the smallest maximizer of some function $ f : \mathcal{D} \to \R $.)

The expectation $ m_n(k) $ of $ U_{nk} - \frac{k}{n} U_{nn} $ is a function of $ \Delta_n $, and we assume that the limit
\begin{equation}
\label{DeltaNLimit}
\Delta = \lim_{n \to \infty} \Delta_n, \qquad i = 0,1,
\end{equation}
exists. To proceed, we need further notation. Put
\begin{equation}
\label{Def_Mnk}
\calU_n(k) = \EE( \wh{\calU}_n(k) ) = 
\left\{
\begin{array}{cc}
\frac{k(n-\tau)}{g(k/n) n^2} \Delta_n, &\qquad k \le \tau, \\
\tau \frac{n-k}{g(k/n) n^2} \Delta_n, & \qquad k > \tau,
\end{array}
\right.
\end{equation}
and introduce the associated rescaled functions
\begin{align}
\label{Def_mhat_cont}
\wh{u}_n(t) &= \wh{\calU}_n( \trunc{nt} ), \qquad t \in [0,1], \\
u_n(t) & = \calU_n( \trunc{nt} ), \qquad t \in [0,1], 
\end{align}
and
\begin{equation}
\label{Def_m_cont}
u(t) = \frac{t}{g(t)} (1-\vartheta) \Delta \eins(t \le \vartheta) + \vartheta \frac{1-t}{g(t)} \Delta \eins( t > \vartheta ), \qquad t \in [0,1].
\end{equation}
If $ g = 1 $, then for  $ \Delta > 0 $ the function $ u(t) $ is strictly increasing on $ [0, \vartheta] $ and strictly decreasing on $ [\vartheta,1] $, and for $ \Delta < 0 $ the same holds for $ | u(t) | $. The same applies for any weight function $g$  such that
\begin{equation}
\label{CPWeightCondition}
 \text{ $ g $ is continuous, $ t/g(t) $ increasing on $ [0,\vartheta] $ and $ (1-t)/g(t) $  decreasing on $ [\vartheta,1] $.} 
\end{equation}
  Obviously, this holds for a large class of functions $ g $ whatever the value of the true change-point. Hence, we expect that the maximizers of $ | u_n( t ) | $, $ t \in [0,1] $, and its estimator $ | \wh{u}_n( t ) | $, $ t \in [0,1] $, converge to the true change-point $ \vartheta $. But the maximizers $ \wh{\tau}_n $ of $ | \wh{\calU}_n | $ and $ \wh{t}_n $ of $ | \wh{u}_n | $ are related by
\begin{equation}
\label{RelationMaximizers}
\wh{\tau}_n = \argmax_{1 \le k \le n} \wh{\calU}_n(k) = \argmax_{1 \le k \le n} \wh{u}_n( k/n) = n \argmax_{t \in \{1/n, \ldots, 1\}} \wh{u}_n(t) = n \wh{t}_n 
\end{equation}
Therefore, since $ \wh{u}_n $ is constant on $ [k/n,(k+1)/n) $, $ k \in \{1, \ldots, n-1 \} $ and vanishes on $ [0,1/n) $,
$ \wh{t}_n \stackrel{\PP}{\to} \vartheta $, as $ n \to \infty $ implies $ \frac{\trunc{\wh{\tau}_n}}{n} \stackrel{\PP}{\to} \vartheta $, as $ n \to \infty $.

A martingale approximation and Doob's inequality provide the following uniform convergence.

\begin{theorem} 
	\label{UnifConvergence}
	Let $g$ be a weight function satisfying (\ref{AssumptionWeightFunction}) and (\ref{CPWeightCondition}). If (\ref{DeltaNLimit}) holds, then 
	\begin{equation}
	\label{UnifConvM1}
	\max_{ 1 \le k < n } | \wh{\calU}_n(k) - \calU_n(k)  | \stackrel{\PP}{\to} 0,  \qquad n \to \infty,
	\end{equation}
	\begin{equation}
	\label{UnifConvM2}
	\sup_{t \in [0,1]} | \wh{u}_n(t) - u(t) |  \stackrel{\PP}{\to} 0,  \qquad n \to \infty.
	\end{equation}
\end{theorem}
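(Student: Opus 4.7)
The strategy is to first prove (\ref{UnifConvM1}) by rewriting the difference as a scaled weighted CUSUM of a centered process and controlling it by a martingale argument; (\ref{UnifConvM2}) then follows from (\ref{UnifConvM1}) together with a deterministic uniform convergence. The starting point is the algebraic identity
$$\wh{\calU}_n(k) - \calU_n(k) = \frac{1}{n g(k/n)} \Bigl[D_{nk} - \frac{k}{n} D_{nn}\Bigr] = \frac{1}{\sqrt{n}} \cdot \frac{1}{\sqrt{n}\, g(k/n)} \Bigl[D_{nk} - \frac{k}{n} D_{nn}\Bigr],$$
which follows directly from the definitions, noting $D_{nk} = U_{nk} - \EE U_{nk}$ and that $\calU_n(k)$ is exactly the deterministic drift $m_n(k)/(n g(k/n))$ from (\ref{Def_Mnk}). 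Hence (\ref{UnifConvM1}) reduces to stochastic boundedness of the weighted centered CUSUM on the right, since the leading $1/\sqrt{n}$ then pushes the uniform maximum to zero.

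To establish that stochastic boundedness I would invoke the $\calF_{nk}$-martingale approximation of Lemma~\ref{LemmaMartingaleApprox}, replacing $D_{nk}$ by a martingale $M_{nk}$ with $\EE M_{nj}^2 = O(j)$ up to a uniformly negligible remainder. Doob's $L^2$-maximal inequality yields $\EE \max_{j \le m} M_{nj}^2 \ll m$ for each $m \le n$, which combined with a dyadic decomposition of $\{1,\ldots,n\}$ around the endpoints $0$ and $n$ and the weighting lower bound $g(t) \ge C_g [t(1-t)]^{\beta}$ from (\ref{AssumptionWeightFunction}) with $\beta < 1/2$ yields
$$\max_{1 \le k < n} \frac{1}{\sqrt{n}\, g(k/n)} \Bigl| D_{nk} - \frac{k}{n} D_{nn} \Bigr| = O_\PP(1),$$
proving (\ref{UnifConvM1}) after dividing by $\sqrt{n}$. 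This endpoint control is the main technical obstacle: the vanishing of $g$ at $0,1$ is matched by the vanishing variance of $G_n^0$ there only when $\beta<1/2$, a threshold shown to be sharp by the LIL discussion after (\ref{WeightedTest}). Equivalently, the bound can be read off from Theorem~\ref{BasicStrongApprox}(iii)--(iv) together with the a.s.\ finiteness of $\sup_{0<t<1}|B^0(t)|/g(t)$ under (\ref{AssumptionWeightFunction}), cf.\ \cite{CsoergoeHorvath1993}.

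For (\ref{UnifConvM2}) I would apply the triangle inequality
$$\sup_{t \in [0,1]}|\wh{u}_n(t) - u(t)| \le \sup_t|\wh{u}_n(t) - u_n(t)| + \sup_t|u_n(t) - u(t)|,\qquad u_n(t) := \calU_n(\trunc{nt}).$$
Because $\wh{u}_n$ and $u_n$ are piecewise constant with breakpoints at $k/n$, the first supremum equals $\max_{1 \le k < n}|\wh{\calU}_n(k)-\calU_n(k)|$, which is $o_\PP(1)$ by (\ref{UnifConvM1}). For the deterministic second supremum, combining $\trunc{nt}/n \to t$ uniformly, $\tau/n \to \vartheta$, $\Delta_n \to \Delta$ from (\ref{DeltaNLimit}), and continuity of $g$ gives pointwise convergence $u_n(t) \to u(t)$. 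Uniformity on $[0,1]$ follows from compactness: on $[0,\vartheta - \delta] \cup [\vartheta + \delta,1]$ the maps $t \mapsto t/g(t)$ and $t \mapsto (1-t)/g(t)$ are uniformly continuous (continuous throughout and bounded at the endpoints because $\beta<1/2$), while on the shrinking neighborhood of $\vartheta$ the monotonicity built into (\ref{CPWeightCondition}) and continuity of $u$ at $\vartheta$ (where the two regimes agree) keep the oscillation of both $u_n$ and $u$ arbitrarily small.
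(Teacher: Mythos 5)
Your proof is correct and follows essentially the same route as the paper's: the identity $\wh{\calU}_n(k)-\calU_n(k)=\frac{1}{n\,g(k/n)}\bigl(D_{nk}-\tfrac{k}{n}D_{nn}\bigr)$, the martingale approximation of Lemma~\ref{LemmaMartingaleApprox} combined with Doob's maximal inequality and the lower bound $g(t)\ge C_g[t(1-t)]^\beta$ with $\beta<1/2$ for the stochastic part, and the triangle inequality plus pointwise convergence upgraded to uniform convergence by monotonicity and continuity of $g$ for (\ref{UnifConvM2}). The only cosmetic difference is that you establish the stronger $O_{\PP}(1)$ bound for the $\sqrt{n}$-scaled weighted CUSUM (via dyadic chaining or by citing Theorem~\ref{BasicStrongApprox}), whereas the paper bounds the remainder by a sum-of-squares Chebyshev estimate and the martingale maximum directly at scale $n$.
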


The consistency of the change-point estimator $ \wh{\tau}_n $ follows now easily from the above results.

\begin{theorem} 
	\label{ConsistencyCPEstimator} Under the assumptions of Theorem~\ref{UnifConvergence} and the change-point alternative model (\ref{CPM}) with $ \tau = \lfloor n \vartheta \rfloor $, $ \vartheta \in ( \varepsilon, 1) $ for some $ \varepsilon \in (0,1) $, we have
	\[
	\frac{ \wh{\tau}_n}{ n } \stackrel{\PP}{\to} \vartheta, \qquad n \to \infty.
	\]
\end{theorem}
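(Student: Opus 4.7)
The plan is to derive Theorem~\ref{ConsistencyCPEstimator} from the uniform convergence in Theorem~\ref{UnifConvergence} by a standard argmax continuity argument, plus a check that the limit function $u$ in (\ref{Def_m_cont}) has a unique, well-separated maximum at $\vartheta$.

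First I would verify the well-separation property. By the change-point alternative together with (\ref{DeltaNLimit}), we may assume $\Delta\neq 0$, and the weight-function hypothesis (\ref{CPWeightCondition}) makes $t\mapsto t/g(t)$ strictly increasing on $[0,\vartheta]$ and $t\mapsto (1-t)/g(t)$ strictly decreasing on $[\vartheta,1]$. Since
\[
 |u(t)| = \frac{t(1-\vartheta)}{g(t)}|\Delta|\,\eins(t\le\vartheta) + \frac{\vartheta(1-t)}{g(t)}|\Delta|\,\eins(t>\vartheta),
\]
the function $|u|$ is continuous on $[0,1]$, strictly increasing on $[0,\vartheta]$, strictly decreasing on $[\vartheta,1]$, with unique maximum $|u(\vartheta)| = \vartheta(1-\vartheta)|\Delta|/g(\vartheta)>0$. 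Consequently, for any $\delta>0$ with $(\vartheta-\delta,\vartheta+\delta)\subset(\varepsilon,1)$, the separation
\[
 \eta(\delta) := |u(\vartheta)| - \sup_{t\in[0,1],\,|t-\vartheta|\ge\delta} |u(t)|
\]
is strictly positive.

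Next I would combine this with the uniform convergence (\ref{UnifConvM2}). For fixed $\delta>0$, on the event $\{\|\wh u_n - u\|_\infty < \eta(\delta)/3\}$, which has probability tending to $1$ by Theorem~\ref{UnifConvergence}, every $t$ with $|t-\vartheta|\ge\delta$ satisfies
\[
 |\wh u_n(t)| \le |u(t)| + \eta(\delta)/3 \le |u(\vartheta)| - 2\eta(\delta)/3.
\]
Choosing $t_n^\star\in\{1/n,\ldots,1\}$ to be the nearest grid point to $\vartheta$ (so $|t_n^\star-\vartheta|\le 1/n$ and, for $n$ large, $|u(t_n^\star)|\ge |u(\vartheta)|-\eta(\delta)/3$ by continuity of $u$), one gets
\[
 |\wh u_n(t_n^\star)| \ge |u(t_n^\star)| - \eta(\delta)/3 \ge |u(\vartheta)| - 2\eta(\delta)/3 \ge \sup_{|t-\vartheta|\ge\delta}|\wh u_n(t)|,
\]
with the last inequality strict (say, by $\eta(\delta)/3$) for $n$ sufficiently large. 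Hence the maximizer $\wh t_n$ of $|\wh u_n|$ over $\{1/n,\ldots,1\}$ must lie in $(\vartheta-\delta,\vartheta+\delta)$.

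Finally, the identity (\ref{RelationMaximizers}) gives $\wh\tau_n/n = \wh t_n$ exactly (modulo the floor), so $|\wh\tau_n/n - \vartheta|<\delta$ on the same event. Since $\delta>0$ was arbitrary, $\wh\tau_n/n\stackrel{\PP}{\to}\vartheta$. The only real obstacle is verifying the strict separation $\eta(\delta)>0$, which I handle above using (\ref{CPWeightCondition}); everything else is a routine application of the argmax-continuous-mapping principle, made possible by the uniform law (\ref{UnifConvM2}) established in Theorem~\ref{UnifConvergence}.
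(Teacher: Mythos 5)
Your proposal is correct and follows essentially the same route as the paper: the paper's proof simply invokes the standard argmax consistency result (citing van der Vaart) from the isolated maximum of $u$ at $\vartheta$, the uniform convergence (\ref{UnifConvM2}), and the identity (\ref{RelationMaximizers}), which is exactly the argument you spell out in detail. The only cosmetic point is the epsilon bookkeeping at the end (both bounds land on $|u(\vartheta)|-2\eta(\delta)/3$, so take margins of $\eta(\delta)/4$ to get a strict gap), which is routine.
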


\section{Simulations}
\label{Sec:Simulations}

To investigate the statistical performance of the change-point tests a change from a family of AR($\rho_\nu$) series to a family of (shifted) MA($r$) series, which are, at lag $0$, independent, was examined: We assume that these series, $ Y_{ni}^{(\nu)} $, are defined as follows. Fix $ r \in \N $ and let
\[
\text{pre-change ($ i \le \tau $):}\ Y_{ni}^{(\nu)} = \rho_{\nu} Y_{n,i-1}^{(\nu)} + \epsilon_{i-1}, \quad \text{after-change ($ i > \tau $):} \
 Y_{ni}^{(\nu)}  =  \sum_{j=0}^r \theta_r^{(\nu)} \epsilon_{i-j-(\nu-1)r},
\] 
with $ \rho_\nu = 0.5 \nu/d $,  for $ \nu \in \{1, \ldots, d\} $ and $ n \ge 1 $, i.i.d. standard normal $ \epsilon_t $  and $ \theta_j^{(\nu)} = (1-0.1 \cdot j)  \sqrt{(1-\rho_{\nu}^2)^{-1} / s_\theta^2 } $, $s_\theta^2 = \sum_{k=0}^4 (1-0.1\cdot k)^2    $, $ j \in \{0, \ldots, r = 4\} $, so that the marginal variances of the $ d $ time series do not change. The asymptotic variance parameter, $ \alpha_n $, was estimated with lag truncation $ m = \lceil n^{1/3} \rceil $ justified by simulations not reported here, using three sampling approaches: (i) Learning sample of size $ L = 500 $,  (ii) full in-sample estimation and (iii) stopped in-sample estimation using the modified rule $ \wt{\tau}_n = \max( \trunc{n/4}, \min( 1.15 \cdot \wh{\tau}_n , n ) ) $. Although this modification may lead to some bias, the actual number of observations was increased, since otherwise the sample size for estimation may be too small.

Both a fixed and a random projection were examined. The case of a fixed projection vector was studied by using $ \vecv_n = \vecw_n = (1/d, \ldots, 1/d)^\top $. Random projections were generated by drawing from a Dirichlet distribution, % $ \vecv_n, \vecw_n \sim  \text{Dir}(d,\eins_d)$, 
such that the projections have unit $ \ell_1 $ norm and expectation $ d^{-1} \eins $, in order to study the effect of random perturbations around the fixed projections. 

Table~\ref{SimPower1} provides the rejection rates for  $ n = 100 $ and dimensions $ d \in \{ 10, 100, 200 \} $  when the change-point is given by $ \tau = \trunc{n \vartheta} $ with $ \vartheta \in \{ 0.1, 0.25, 0.5, 0.75, 0.9 \} $, to study changes within the central $ 50\% $ of the data as well as early and late changes. First, one can notice that the power is somewhat increasing in the dimension but quickly saturates. The results for stopped-sample and in-sample estimation are quite similar. The unweighted CUSUM procedure has very accurate type I error rate if a learning sample is present, whereas the weighted CUSUM overreacts somewhat under the null hypothesis. For stopped-sample and in-sample estimation the unweighted procedure is conservative, whereas the weighted CUSUM keeps the level quite well with only little overreaction. Although the unweighted CUSUM operates at a smaller significance level, it is more powerful than the weighted procedure when the change occurs in the middle of the sample, but the weighted CUSUM performs better for early changes. The results for a random projection are very similar.

The accuracy and power of the global test related to the CUSUM transform was examined for a change to a MA model after half of the sample for the sample size $ n = 500 $. The design of this study is data-driven as the principal directions calculated for the ozone data set were used in  addition to random projections. The global test based on the weighted CUSUM transform using the weight function $ g(t) = [t(1-t)]^{\beta} $ with $ \beta=0.3 $ was fed with the first $r$ poejctions for various values of $r$. The results are provided in Table~\ref{SimPowerGlobalTest}. Each entry is based on $ 1,000 $ runs. According to these figures, the proposed global test is accurate in terms of the significance level and quite powerful.

\begin{small}
	\begin{table}[h]
		\centering
		\begin{tabular}{ccc}
			\hline
			$r$ & level & power  \\ 
			\hline
			$ 2 $ & $ 0.045 $ & $ 0.9 $ \\
			$ 3 $ & $0.05$  &  $ 0.85 $  \\
			$ 4 $ & $0.042 $ &$ 0.9 $ \\
			$ 7 $ & $0.037 $ & $  0.884 $  \\
			\hline
		\end{tabular}
		\caption{Simulated level and power of the global test based on the CUSUM transform.}
		\label{SimPowerGlobalTest}
	\end{table}
\end{small}

\section{Data example}
\label{Sec:Example}

To illustrate the proposed methods, we analyze $ n = 1826 $ daily observations of 8 hour maxima of ozone concentration collected at $ d = 444 $ monitors in the U.S.. The data corresponds to the 5-year-period from January 2010 to December 2014. We analyze mean corrected data, see \cite{SchweinbergerEtAl2017}, namely residuals obtained after fitting cubic splines to the log-transformed data, in order to correct level and seasonal ups and downs. 

\begin{small}
	\begin{table}
		\centering
		\caption{Simulated power for the sample size $ n = 100 $ for fixed projection and a random projection for dimension $ d =  10,100,200 $ and different change-point locations. The entries for $ \vartheta = 1 $ provide simulated type I error rates.}
\label{SimPower1}
		\begin{tabular}{lrrrrrrrr}
			\hline
			Fixed projection \\ % &		\multicolumn{8}{c}{Fixed projection} \\
			& $\vartheta$ & 10 & 100 & 200 &$ \vartheta$ & 10 & 100 & 200 \\ 
			Method & \multicolumn{4}{c}{\underline{Unweighted CUSUM}}  &  \multicolumn{4}{c}{\underline{Weighted CUSUM}}  \\	
			& $\vartheta$ & 10 & 100 & 200 &$ \vartheta$ & 10 & 100 & 200 \\ 
			& 0.10 & 0.03 & 0.02 & 0.02 & 0.10 & 0.15 & 0.14 & 0.14 \\ 
			&0.25 & 0.31 & 0.34 & 0.34 & 0.25 & 0.35 & 0.37 & 0.38 \\ 
			L=500	&0.50 & 0.70 & 0.75 & 0.77 & 0.50 & 0.55 & 0.59 & 0.61 \\ 
			&0.75 & 0.39 & 0.46 & 0.46 & 0.75 & 0.33 & 0.40 & 0.38 \\ 
			&0.90 & 0.07 & 0.08 & 0.08 & 0.90 & 0.09 & 0.08 & 0.09 \\ 
			&1.00 & 0.05 & 0.06 & 0.05 & 1.00 & 0.09 & 0.11 & 0.10 \\[1ex] % \hline
			& 0.10 & 0.14 & 0.14 & 0.09 & 0.10 & 0.79 & 0.98 & 0.99 \\ 
			&0.25 & 0.79 & 0.88 & 0.88 & 0.25 & 0.86 & 0.93 & 0.92 \\ 
			stopped-sample		&0.50 & 0.90 & 0.93 & 0.93 & 0.50 & 0.70 & 0.72 & 0.71 \\ 
			&0.75 & 0.29 & 0.30 & 0.29 & 0.75 & 0.17 & 0.17 & 0.18 \\ 
			&0.90 & 0.03 & 0.02 & 0.02 & 0.90 & 0.04 & 0.03 & 0.04 \\ 
			&1.00 & 0.02 & 0.01 & 0.01 & 1.00 & 0.07 & 0.07 & 0.08 \\[1ex]  %\hline
			&0.10 & 0.14 & 0.13 & 0.11 & 0.10 & 0.79 & 0.98 & 0.99 \\ 
			&0.25 & 0.79 & 0.87 & 0.86 & 0.25 & 0.86 & 0.93 & 0.92 \\ 
			in-sample		&0.50 & 0.90 & 0.92 & 0.93 & 0.50 & 0.69 & 0.73 & 0.72 \\ 
			&0.75 & 0.29 & 0.30 & 0.29 & 0.75 & 0.17 & 0.17 & 0.18 \\ 
			&0.90 & 0.03 & 0.02 & 0.02 & 0.90 & 0.04 & 0.04 & 0.04 \\ 
			&1.00 & 0.02 & 0.02 & 0.02 & 1.00 & 0.08 & 0.08 & 0.07 \\ 
			\hline
			Random projection \\ % &		\multicolumn{8}{c}{Fixed projection} \\
Method & \multicolumn{4}{c}{\underline{Unweighted CUSUM}}  &  \multicolumn{4}{c}{\underline{Weighted CUSUM}}  \\	
& $\vartheta$ & 10 & 100 & 200 &$ \vartheta$ & 10 & 100 & 200 \\ 
			&0.10 & 0.02 & 0.02 & 0.02 & 0.10 & 0.14 & 0.14 & 0.14 \\ 
			&0.25 & 0.32 & 0.36 & 0.36 & 0.25 & 0.36 & 0.39 & 0.39 \\ 
			$L=500$		&0.50 & 0.71 & 0.77 & 0.76 & 0.50 & 0.54 & 0.60 & 0.61 \\ 
			&0.75 & 0.39 & 0.46 & 0.46 & 0.75 & 0.33 & 0.39 & 0.39 \\ 
			&0.90 & 0.08 & 0.08 & 0.08 & 0.90 & 0.09 & 0.09 & 0.10 \\ 
			&1.00 & 0.05 & 0.05 & 0.05 & 1.00 & 0.10 & 0.10 & 0.09 \\[1ex] %\hline
			&0.10 & 0.13 & 0.15 & 0.11 & 0.10 & 0.80 & 0.98 & 0.99 \\ 
			&0.25 & 0.80 & 0.86 & 0.87 & 0.25 & 0.85 & 0.92 & 0.92 \\ 
			stopped-sample		&0.50 & 0.90 & 0.93 & 0.93 & 0.50 & 0.70 & 0.72 & 0.72 \\ 
			&0.75 & 0.28 & 0.29 & 0.30 & 0.75 & 0.18 & 0.17 & 0.17 \\ 
			&0.90 & 0.03 & 0.03 & 0.03 & 0.90 & 0.04 & 0.04 & 0.04 \\ 
			&1.00 & 0.02 & 0.02 & 0.02 & 1.00 & 0.08 & 0.07 & 0.07 \\[1ex] %\hline
			&0.10 & 0.14 & 0.14 & 0.11 & 0.10 & 0.80 & 0.98 & 0.98 \\ 
			&0.25 & 0.79 & 0.87 & 0.87 & 0.25 & 0.86 & 0.92 & 0.92 \\ 
			in-sample		&0.50 & 0.91 & 0.93 & 0.93 & 0.50 & 0.71 & 0.72 & 0.73 \\ 
			&0.75 & 0.28 & 0.30 & 0.30 & 0.75 & 0.17 & 0.18 & 0.16 \\ 
			&0.90 & 0.03 & 0.03 & 0.02 & 0.90 & 0.04 & 0.04 & 0.04 \\ 
			&1.00 & 0.02 & 0.02 & 0.02 & 1.00 & 0.08 & 0.07 & 0.08 \\ 
			\hline
		\end{tabular}
	\end{table}
\end{small}

The data of the first year was used to calculate a sparse PCA. We use the method of  \cite{ErichsonEtAl2018} to get sparse directions $ \vecv_i $ instead of  \cite{CaiMaWu2015}, since, according to the latter authors, their estimators leading to minimax rates are computationally infeasible.The sparse PCA was conducted as follows: Denote the  $ 365 \times 444 $ data matrix by $ \matX $.  \cite{ErichsonEtAl2018}  propose to calculate  an orthonormal matrix $ \matA $ and a sparse matrix $ \matB = ( \vecv_1, \ldots, \vecv_d )  $ solving
\[
\min_{\matA, \matB} (1/2) \| \matX - \matX \matB \matA^\top \|_F^2 + \psi( \matB ), \quad \matA^\top \matA = \matid, 
\]
where we used an elastic net regularization $ \psi(\matB ) = \lambda_1 \| \matB \|_{\ell_1} + \lambda_2 \| \matB\|_{\ell_2} $ with parameters $ \lambda_1 = 0.025 $, $ \lambda_2 = 0.1 $. This analysis shows that
the supports $ \calS_i = \{ j : \vecv_{ij} \not= 0 \} $ of the leading six projections, where $ \vecv_i = (v_{i1}, \ldots, v_{id})^\top $ for $ i \in \{1, \ldots, 6\} $, correspond to a spatial segmentation which eases interpretation. 
Figure~\ref{fig:usa-ozone-spca} shows the geographic locations of these supports.

\begin{figure}
	\centering
	\includegraphics[width=0.6\linewidth]{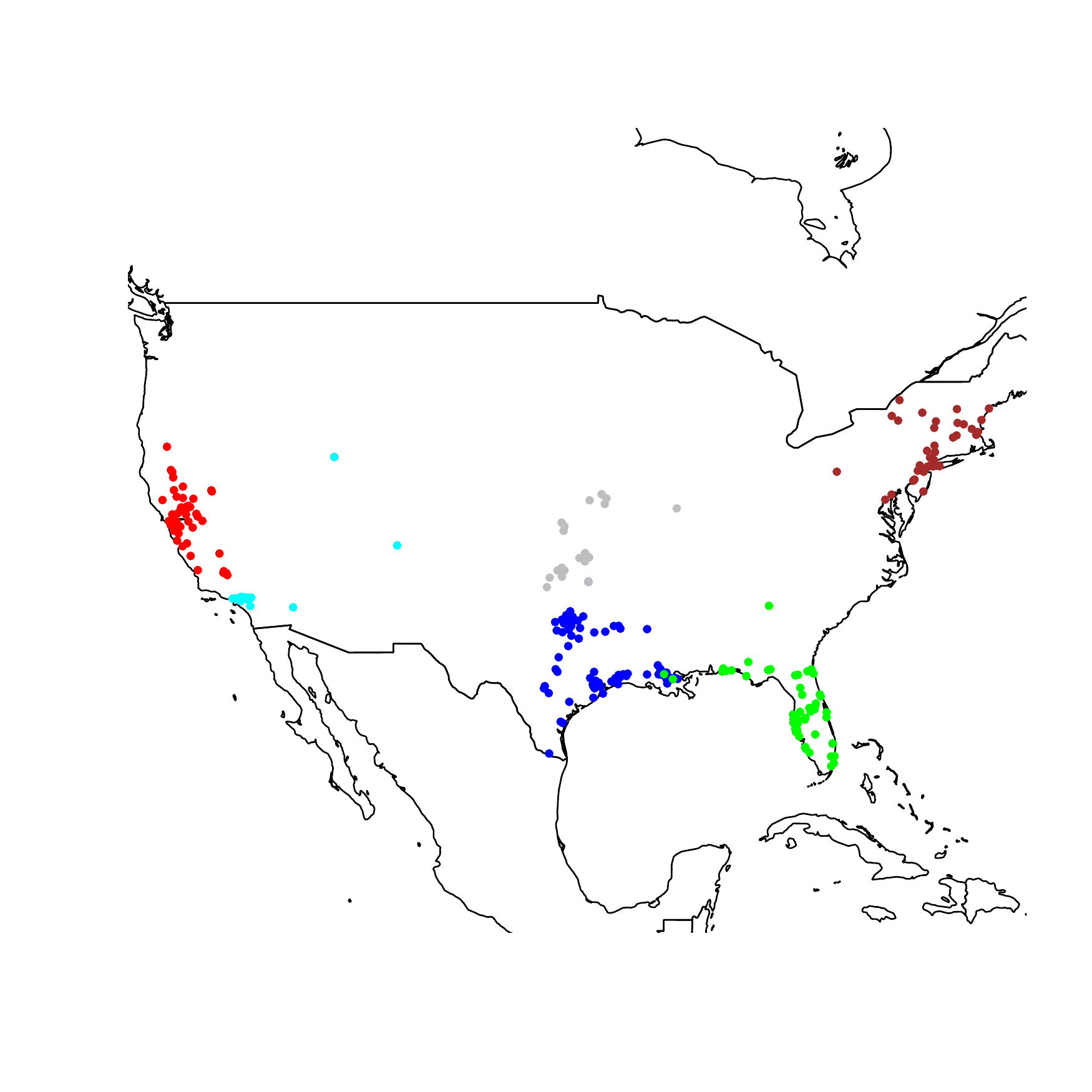}
	\caption{Sparse principal component analysis of ozone residuals from $444$ monitors across the U.S.. Depicted are the locations of the supports $ \calS_1, \ldots, \calS_6 $ of the leading principal directions $ \vecv_1, \ldots, \vecv_6 $.}
	\label{fig:usa-ozone-spca}
\end{figure}

 The data of the years 2011 to 2014, providing the test sample $ \vecY_{n1}, \ldots, \vecY_{nn} $ with $ n = 1462$, was now analyzed using the leading directions as projection vectors. The proposed change-point tests were  applied to test for the presence of changes in the (co-) variances $ \Cov( \vecv_k^\top \vecY_{ni}, \vecv_\ell^\top \vecY_{ni} ) $, $ i \in  \{1, \ldots, n\} $, for $ 1 \le k \le \ell \le 7$. The asymptotic variance parameter was estimated using both the full in-sample and stopped-sample approach.  The application of  the unweighted CUSUM approach revealed no significances at the usual levels.
\section{Proofs}
\label{Sec:Proofs}

The proofs are based on martingale approximations, which require several additional results and technical preparations. These results extend and complement the results obtained in \cite{StelandSachs2017}.

\subsection{Preliminaries}

For an arbitray array of coefficients $ \aarr = \{ a_{nj}^{(\nu)} : j \ge 0, 1 \le \nu \le d_n, n \ge 1 \} $ and vectors
$ \vecv_n = ( v_{n1}, \ldots, v_{nd_n})^\top $ and $ \vecw_n = (w_{n1}, \ldots, w_{nd_n})^\top $ with finite $ \ell_1 $-norm, i.e., 
$  \| \vecv_n \|_{\ell_1}, \| \vecw_n \|_{\ell_1} < \infty $, define
\[
f_{0,0}^{(n)}( \aarr, \vecv_n, \vecw_n ) = \sum_{\nu,\mu=1}^{d_n} v_{n\nu} w_{n\mu} a_{nj}^{(\nu)} a_{nj}^{(\mu)}, \quad 
f_{l,j}^{(n)}( \aarr ) = \sum_{\nu,\mu=1}^{d_n} v_{n\nu} w_{n\mu} [ a_{nj}^{(\nu)} a_{n,j+l}^{(\mu)} + a_{nj}^{(\mu)} a_{n,j+l}^{(\nu)} ]
\]
for $ j \in \{0, 1, \cdots\} $ and $ l \in  \{1, 2, \cdots\} $.
Put  
$
\wt{f}_{\ell,i}^{(n)}( \aarr, \vecv_n, \vecw_n ) = \sum_{j=i}^\infty f_{\ell,j}^{(n)}( \aarr, \vecv_n, \vecw_n ), 
$ 
for $\ell, i \in  \{0, 1, 2, \cdots \}$.

Introduce for coefficients $ \aarr $ satisfying Assumption (D) and vectors $ \vecv_n $ and $ \vecw_n $ the $ \calF_{nk} $-martingales
\[
M_k^{(n)}( \aarr, \vecv_n, \vecw_n ) = \wt{f}_{0,0}^{(n)}( \aarr, \vecv_n, \vecw_n ) \sum_{i=0}^k ( \epsilon_{ni}^2 - \sigma_i^2) + \sum_{i=0}^k \epsilon_{ni} \sum_{\ell=1}^\infty 
\wt{f}_{\ell,0}^{(n)}( \aarr, \vecv_n, \vecw_n ) \epsilon_{n,i-j}, \qquad k \ge 0,
\]
which start in $ M_0^{(n)} = 0 $, for each $ n \ge 0 $.  Put
\[
S_{n',m'}^{(n)}( \aarr, \vecv_n, \vecw_n ) = \sum_{i=m'+1}^{m'+n'} ( Y_{ni}( \ava ) Y_{ni}( \awa ) - \EE[ Y_{ni}( \ava ) Y_{ni}( \awa ) ] ), \qquad m', n' \ge 0.
\]
Notice that, by definitions (\ref{Def_vecU}) and (\ref{Def_vecD}),
\begin{equation}
\label{EqualForApprox}
S_{k,0}^{(n)}( \barr, \vecv_n, \vecw_n)  = \vecD_{nk}^{(1)}, \qquad S_{k,0}^{(n)}( \carr, \vecv_n, \vecw_n )  = \vecD_{nk}^{(2)},
\end{equation}
for $ k \ge 1 $ and $  n \ge 1 $, where $ \vecD_{nk} = ( \vecD_{nk}^{(1)}, \vecD_{nk}^{(2)}) $. 
For brevity introduce the difference operator
\begin{align*}
\delta M_{m'+n'}^{(n)}( \aarr, \vecv_n, \vecw_n ) & = M_{m'+n'}^{(n)}( \aarr, \vecv_n, \vecw_n ) - M_{m'}^{(n)}( \aarr, \vecv_n, \vecw_n )  \\
&= \wt{f}_{0,0}^{(n)}( \aarr, \vecv_n, \vecw_n ) \sum_{i=m'+1}^{m'+n'} ( \epsilon_{ni}^2 - \sigma_{ni}^2) + \sum_{i=m'+1}^{m'+n'} \epsilon_{ni} \sum_{\ell=1}^\infty 
\wt{f}_{\ell,0}^{(n)}( \aarr, \vecv_n, \vecw_n ) \epsilon_{n,i-\ell}, 
\end{align*}
for $ k, n \ge 1 $, which takes the lag $ n' $ forward difference at $ m' $.
Notice that for $ m' = 0 $ 
\[
\delta M_k^{(n)}( \aarr, \vecv_n, \vecw_n ) = \wt{f}_{0,0}^{(n)}( \aarr, \vecv_n, \vecw_n ) \sum_{i=1}^k ( \epsilon_{ni}^2 - \sigma_{ni}^2) + \sum_{i=1}^k \epsilon_{ni} \sum_{\ell=1}^\infty  \wt{f}_{\ell,0}^{(n)}( \aarr, \vecv_n, \vecw_n ) \epsilon_{n,i-\ell}, \ k, n \ge 1,
\]
coincides with the martingale $ M_k^{(n)}( \aarr, \vecv_n, \vecw_n ) $. A direct calculation shows that 
\begin{align}
\label{CovMartingale_shifted}
& \Cov( \delta M_{m'+n'}^{(n)}( \barr, \vecv_n, \vecw_n ), \delta M_{m'+n'}^{(n)}( \carr, \wt\vecv_n, \wt\vecw_n )  )  \\ \nonumber
& \ =  \wt{f}_{0,0}^{(n)}( \barr, \vecv_n, \vecw_n ) \wt{f}_{0,0}^{(n)}( \carr, \wt\vecv_n, \wt\vecw_n ) 
\sum_{j=1}^{n'} ( \gamma_{n,m'+j} + \sigma_{n,m'+j}^4 ) 
 \\ \nonumber & \qquad \qquad
+ \sum_{j=1}^{n'} \sum_{\ell=1}^\infty \wt{f}_{\ell,0}^{(n)}( \barr, \vecv_n, \vecw_n ) \wt{f}_{\ell,0}^{(n)}( \carr, \wt\vecv_n, \wt\vecw_n ) \sigma_{n,m'+j}^2 \sigma_{n,m'+j-\ell}^2,
\end{align}
for $ n', m' \ge 0 $ and $ n \ge 1 $.

\subsection{Martingale approximations}

The following lemma provides an explicit formula for the asymptotic covariance parameter  related to the two CUSUMs, $  \beta_n^2( \barr, \vecv_n, \vecw_n, \carr, \wt\vecv_n, \wt\vecw_n )  $, using different pairs $ (\vecv_n, \vecw_n) $ and $ (\wt\vecv_n, \wt\vecw_n) $ of weighting vectors, abbreviated as $ \beta_n^2( \barr, \carr) = \beta_n^2( \barr, \vecv_n, \vecw_n, \carr, \vecv_n, \vecw_n ) $. Especially, it follows from these results that  the asymptotic variance of a single CUSUM detector under the no-change null hypothesis, $ \alpha^2( \aarr ) = \beta_n^2( \aarr, \aarr ) $, satisfies
\[
  \alpha_n^2( \aarr ) \approx \frac{1}{n} \Var( D_{nn} ).
\]
The following general results hold under a mild condition on the error terms and especially show that (\ref{CovMartingale_shifted}) can be approximated by $ n' \beta_n^2( \barr, \vecv_n, \vecw_n, \carr, \wt\vecv_n, \wt\vecw_n ) $ at the rate $ (n')^{1-\theta} $, uniformly in $ n $ and $ m' $, cf. \cite[(3.18)]{StelandSachs2017} and \cite{Kouritzin1995}. The proof extends these latter results and improves the bounds, but it is  technical and thus deferred to the appendix. The improved bounds show that the $ \ell_1 $-norms of the weighting vectors may grow slowly without sacrificing the convergence of the second moments, cf. the verification of (II) and (III) in the proof of Theorem~\ref{BasicStrongApprox}. 

\begin{lemma}
	\label{CorollaryBetaSq}
	Let $ \epsilon_{ni} $, $ i \in \Z $, be independent with variances $ \sigma_{ni}^2 $ and third moments $ \gamma_{ni} $  satisfying  
%\begin{center}
%	\begin{minipage}[b]{.4\textwidth}
%		\vspace{-\baselineskip}
		\begin{equation}
		\frac{1}{n'} \sum_{i=1}^{n'} i | \sigma_{ni}^2 - s_{n1}^2 | \stackrel{n,n'}{\ll}  (n')^{-\beta}, \label{ConditionOnVariances}
		\end{equation}
%	\end{minipage}% 
%%	\hfill\hfill and\hfill
%\qquad and \quad
%	\begin{minipage}[b]{.4\textwidth}
%		\vspace{-\baselineskip}
		\begin{equation}
		\frac{1}{n'}  \sum_{i=1}^{n'} i |\gamma_{ni} - \gamma_n | \stackrel{n,n'}{\ll} (n')^{-\beta} \label{ConditionOnGammas}
		\end{equation}
%	\end{minipage}
%\end{center}
%	\begin{equation} 
%	\label{ConditionOnGammas}
%	\frac{1}{n'}  \sum_{i=1}^{n'} i |\gamma_{ni} - \gamma_n | \stackrel{n,n'}{\ll} (n')^{-\beta}  
%	\end{equation} 
	for constants $ s_{n1}^2 \in (0,\infty) $ and $ \gamma_n \in \R $ for some $ 1 < \beta < 2 $ with $  1 + \theta  < \beta $.  Then for $ n, n' \ge 1 $, with $ K_n = \| \vecv_n \|_{\ell_1} \| \vecw_n \|_{\ell_1} \| \wt\vecv_n \|_{\ell_1} \| \wt\vecv_n \|_{\ell_1} $,
	\begin{equation}
	\label{KonvCovMart_rate}
	\left| \Cov( M_{n'}^{(n)}( \barr, \vecv_n, \vecw_n ), M_{n'}^{(n)}( \carr, \wt\vecv_n, \wt\vecw_n ) )  - (n') \beta_n^2( \barr, \vecv_n, \vecw_n, \carr, \wt\vecv_n, \wt\vecw_n )
	\right| \stackrel{n,n'}{\ll} K_n  (n')^{1-\theta}, 
	\end{equation}
	and for $ n, n' \ge 1 $ and $ m'  \ge 0 $
	\begin{equation}
	\label{KonvCovMart_rate2}
	\left| \Cov( \delta M_{m'+n'}^{(n)}( \barr, \vecv_n, \vecw_n), \delta M_{m'+n'}^{(n)}( \carr, \wt\vecv_n, \wt\vecw_n ) )  - (n') \beta_n^2( \barr,  \vecv_n, \vecw_n, \carr, \wt\vecv_n, \wt\vecw_n )
	\right| \stackrel{n,n',m'}{\ll} K_n  (n')^{1-\theta},
	\end{equation}
	if 
	\begin{equation}
	\label{FormulaBetaSq}
	\beta_n^2( \barr, \vecv_n, \vecw_n, \carr, \wt\vecv_n, \wt\vecw_n ) = \wt{f}_{0,0}^{(n)}( \barr, \vecv_n, \vecw_n ) \wt{f}_{0,0}^{(n)}( \carr ) ( \gamma_n -  s_{n1}^4)  
	+ s_{n1}^4 \sum_{\ell=1}^{\infty} \wt{f}_{\ell,0}^{(n)}( \barr, \vecv_n, \vecw_n ) \wt{f}_{\ell,0}^{(n)}( \carr, \wt\vecv_n, \wt\vecw_n ).
	\end{equation}
\end{lemma}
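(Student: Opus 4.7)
I will work from the exact identity (\ref{CovMartingale_shifted}) and split the difference $\Cov(\delta M_{m'+n'}^{(n)}(\barr,\vecv_n,\vecw_n),\delta M_{m'+n'}^{(n)}(\carr,\wt\vecv_n,\wt\vecw_n)) - n'\beta_n^2(\barr,\vecv_n,\vecw_n,\carr,\wt\vecv_n,\wt\vecw_n)$ into two pieces: a \emph{diagonal} piece arising from the $(\epsilon_{ni}^2-\sigma_{ni}^2)$-contributions to the two martingales, and a \emph{lagged} piece arising from the $\epsilon_{ni}\epsilon_{n,i-\ell}$-contributions. Each will be shown to be $\ll K_n(n')^{1-\theta}$ uniformly in $m'\ge 0$, so that (\ref{KonvCovMart_rate2}) follows and (\ref{KonvCovMart_rate}) becomes the special case $m'=0$.

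\textbf{Two basic tools.} The first is a pair of decay estimates for the coefficient functionals. Inserting (D) into the defining sums and splitting the $j$-range at $j=\ell$ yields
\[
\bigl|\wt{f}_{0,0}^{(n)}(\aarr,\vecv_n,\vecw_n)\bigr|\ll \|\vecv_n\|_{\ell_1}\|\vecw_n\|_{\ell_1},\qquad \bigl|\wt{f}_{\ell,0}^{(n)}(\aarr,\vecv_n,\vecw_n)\bigr|\ll \|\vecv_n\|_{\ell_1}\|\vecw_n\|_{\ell_1}\,\ell^{-1/2-\theta},\quad \ell\ge 1,
\]
and hence $\sum_{\ell\ge 1}|\wt{f}_{\ell,0}(\barr,\vecv_n,\vecw_n)\wt{f}_{\ell,0}(\carr,\wt\vecv_n,\wt\vecw_n)|\ll K_n$. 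The second is an Abel-type bound derived from (\ref{ConditionOnVariances}):
\[
\sum_{i=m'+1}^{m'+n'}|\sigma_{ni}^2-s_{n1}^2|\ \le\ \frac{1}{m'+1}\sum_{i=1}^{m'+n'} i\,|\sigma_{ni}^2-s_{n1}^2|\ \ll\ \frac{(m'+n')^{1-\beta}}{m'+1}\ \ll\ (n')^{1-\beta}\ \le\ (n')^{-\theta},
\]
uniformly in $m'\ge 0$, on distinguishing the regimes $m'\le n'$ and $m'>n'$; the same estimate holds for $|\gamma_{ni}-\gamma_n|$ by (\ref{ConditionOnGammas}).

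\textbf{Assembly.} For the diagonal piece, multiplying the two tools immediately gives a bound of order $K_n(n')^{-\theta}$, well within the target. For the lagged piece I will expand
\[
\sigma_{n,m'+j}^2\sigma_{n,m'+j-\ell}^2-s_{n1}^4 \;=\; (\sigma_{n,m'+j}^2-s_{n1}^2)\sigma_{n,m'+j-\ell}^2 + s_{n1}^2(\sigma_{n,m'+j-\ell}^2-s_{n1}^2),
\]
use the uniform boundedness of $\sigma_{ni}^2$ (implied by the $(4+\delta)$-moment assumption in (E)) to reduce the inner $j$-sum to two applications of the Abel bound, and then sum over $\ell$ using absolute summability of $\sum_\ell|\wt{f}_{\ell,0}(\barr,\cdot)\wt{f}_{\ell,0}(\carr,\cdot)|$.

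\textbf{Main obstacle.} The hard part will be the regime $\ell>m'$: the shifted index $m'+j-\ell$ can drop below $1$ and hence lie outside the range where (\ref{ConditionOnVariances}) applies, forcing a fallback to mere boundedness of $\sigma_{ni}^2$ on those indices. This contributes a term of the form $\sum_{\ell>m'}|\wt{f}_{\ell,0}(\barr,\cdot)\wt{f}_{\ell,0}(\carr,\cdot)|\,\min((\ell-m')_+,n')$, which by the $\ell^{-1/2-\theta}$ tail bound for $\wt{f}_{\ell,0}$ is dominated by
\[
K_n\sum_{\ell\ge 1}\ell^{-1-2\theta}\min(\ell,n')\ \ll\ K_n(n')^{1-2\theta}\ \le\ K_n(n')^{1-\theta},
\]
on splitting the $\ell$-sum at $\ell=n'$ and using $\theta<1/2$. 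It is precisely this boundary contribution that forces the stated exponent $1-\theta$ in (\ref{KonvCovMart_rate2}), rather than the better $-\theta$ produced by the moment-inhomogeneity bounds alone; combining all the estimates then yields the lemma.
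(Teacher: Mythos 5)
Your proposal is correct and follows essentially the same route as the paper's proof: both start from the exact covariance identity for the martingale increments, telescope $\sigma_{n,m'+j}^2\sigma_{n,m'+j-\ell}^2-s_{n1}^4$, exploit the weighted conditions (\ref{ConditionOnVariances})--(\ref{ConditionOnGammas}) to get a window bound uniform in $m'$, and trace the dominant $K_n(n')^{1-\theta}$ term to the $\ell$-tail of the $\wt{f}_{\ell,0}$-sums via their polynomial decay. The only difference is bookkeeping: the paper first truncates the $\ell$-sum at $\ell\le j-1$ and controls the discarded tail by a separate lemma, whereas you keep the full $\ell$-range and split according to whether the shifted index $m'+j-\ell$ lies in the range where the moment conditions apply — the two are equivalent.
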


\begin{lemma} 
	\label{LemmaMartingaleApprox}
	Let $ \{ \epsilon_{nk} : k \ge 1, n \ge 1 \} $ be independent mean zero random variables with
	variances $ \sigma_{nk}^2 $ and third moments $ \gamma_{nk} $ satisfying Assumption (E). Let $ \aarr $ be
	coefficients satisfying the decay condition (D). Then we have for $ n', m' \ge 0 $ and $ n \ge 1 $ 
	\begin{equation}
	\label{MartingaleApprox1}
	\EE( S_{n',m'}^{(n)}( \aarr, \vecv_n, \vecw_n ) - \delta M_{m'+n'}^{(n)}( \aarr, \vecv_n, \vecw_n ) )^2 \stackrel{n,m',n'}{\ll} \| \vecv_n \|_{\ell_1}^2 \| \vecw_n \|_{\ell_1}^2  (n')^{1-\theta}.
	\end{equation}
	Further, for $ k \ge 1 $ and $ n \ge 1 $
	\begin{align}
	\label{MartingaleApproxD1}
	\EE( \vecD_{nk}^{(1)} - \delta M_k^{(n)}( \barr )  )^2 & \stackrel{n,k}{\ll} \| \vecv_n \|_{\ell_1}^2 \| \vecw_n \|_{\ell_1}^2  k^{1-\theta}, \\
	\label{MartingaleApproxD2}
	\EE( \vecD_{nk}^{(2)} - \delta M_k^{(n)}( \carr )  )^2 & \stackrel{n,k}{\ll}  \| \vecv_n \|_{\ell_1}^2 \| \vecw_n \|_{\ell_1}^2 k^{1-\theta}, 	  
	\end{align}
	such that
	\begin{equation}
	\label{MartingaleApproxDvec}
	\EE \| \vecD_{nk} - \delta \vecM_{k}^{(n)} \|_2^2 \stackrel{n,k}{\ll} \| \vecv_n \|_{\ell_1}^2 \| \vecw_n \|_{\ell_1}^2  n^{1-\theta}.
	\end{equation}
	(\ref{MartingaleApproxD1}), (\ref{MartingaleApproxD2}) and (\ref{MartingaleApproxDvec}) also hold (with obvious modifications), if $ \vecD_{nk}^{(1)} = S_{k,0}^{(n)}( \barr, \vecv_n, \vecw_n ) $ and $ \vecD_{nk}^{(2)} = S_{k,0}^{(n)}( \carr, \wt\vecv_n, \wt\vecw_n ) $ for  two pairs of weighting vectors, where the bound in (\ref{MartingaleApproxDvec}) then is given by $ \max \{  \| \vecv_n \|_{\ell_1}^2 \| \vecw_n \|_{\ell_1}^2,  \| \wt\vecv_n \|_{\ell_1}^2 \| \wt\vecw_n \|_{\ell_1}^2 \} n^{1-\theta}  $. 
\end{lemma}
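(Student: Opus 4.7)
\noindent\textbf{Proof plan for Lemma~\ref{LemmaMartingaleApprox}.} The strategy is to expand $S_{n',m'}^{(n)}$ explicitly in the innovations, put $\delta M_{m'+n'}^{(n)}$ in the same form, and exploit the orthogonality of the resulting bilinear blocks in $\{\epsilon_{ni}\}$ to compute $L^{2}$-distances as sums of squared coefficients, which are then controlled via Assumption~(D).

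\medskip

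\noindent\emph{Step 1 (canonical expansion).} Using (\ref{ModelYCoord}), expand $Y_{ni}(\ava) Y_{ni}(\awa)$ as a double sum over $(j,k) \ge 0$ and group terms according to the lag $\ell = |k-j|$. After subtracting the variance contribution on the diagonal, this gives
\[
S_{n',m'}^{(n)}(\aarr,\vecv_n,\vecw_n) = \sum_{i=m'+1}^{m'+n'}\!\!\Bigl\{ \sum_{j\ge 0} f_{0,j}^{(n)}(\aarr,\vecv_n,\vecw_n)(\epsilon_{n,i-j}^2 - \sigma_{n,i-j}^2) + \sum_{\ell\ge 1}\sum_{j\ge 0} f_{\ell,j}^{(n)}(\aarr,\vecv_n,\vecw_n)\,\epsilon_{n,i-j}\epsilon_{n,i-j-\ell}\Bigr\}.
\]
Swapping the order of summation with $p := i-j$ rewrites this as $\sum_p (\epsilon_{np}^2-\sigma_{np}^2) A_{p,0} + \sum_{\ell\ge 1}\sum_p \epsilon_{np}\epsilon_{n,p-\ell} A_{p,\ell}$, with coefficient windows $A_{p,\ell} := \sum_{j=\max(0,m'+1-p)}^{m'+n'-p} f_{\ell,j}^{(n)}$ (empty, and so zero, for $p > m'+n'$). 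The martingale $\delta M_{m'+n'}^{(n)}$ has exactly this structure with $A_{p,\ell}$ replaced by $\wt f_{\ell,0}^{(n)}\,\eins\{m'+1 \le p \le m'+n'\}$.

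\medskip

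\noindent\emph{Step 2 (difference and orthogonality).} Subtracting, the difference $S^{(n)}_{n',m'} - \delta M^{(n)}_{m'+n'}$ has coefficient $-\wt f_{\ell,N_p+1}^{(n)}$ at index $p$ in the sampling window $\{m'+1,\dots,m'+n'\}$, where $N_p = m'+n'-p$ (a truncation tail), and coefficient $A_{p,\ell}$ for $p \le m'$ (a pre-window block sum over $j \in \{m'+1-p,\dots,m'+n'-p\}$). The family $\{\epsilon_{np}^2-\sigma_{np}^2\}_p \cup \{\epsilon_{np}\epsilon_{n,p-\ell}\}_{p,\,\ell\ge 1}$ is pairwise uncorrelated: distinct index pairs share no common innovation index because $\ell,\ell' \ge 1$ precludes the "crossed" pairing. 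Hence $\EE(S^{(n)}_{n',m'} - \delta M^{(n)}_{m'+n'})^2$ equals the sum over $(p,\ell)$ of the squared difference-coefficients times $\EE[\epsilon_{np}^2-\sigma_{np}^2]^2$ or $\sigma_{np}^2\sigma_{n,p-\ell}^2$, each bounded uniformly by Assumption~(E) (which furnishes finite $(4+\delta)$-th moments).

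\medskip

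\noindent\emph{Step 3 (coefficient bounds from (D)).} Condition (D) implies the pointwise bound
\[
|f_{\ell,j}^{(n)}(\aarr,\vecv_n,\vecw_n)| \;\stackrel{n,j,\ell}{\ll}\; \|\vecv_n\|_{\ell_1}\|\vecw_n\|_{\ell_1}\,(j+1)^{-3/4-\theta/2}(j+\ell+1)^{-3/4-\theta/2}.
\]
Splitting the range $j \le \ell$ vs.\ $j>\ell$ and integrating yields $|\wt f_{\ell,N+1}^{(n)}| \ll K \max(N,\ell)^{-1/2-\theta}$ with $K = \|\vecv_n\|_{\ell_1}\|\vecw_n\|_{\ell_1}$, and analogously $|A_{p,\ell}| \ll K \max(m'-p,\ell)^{-1/2-\theta}$ for $p\le m'$. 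Summing these squared bounds first over $p$ (giving a factor $n'$ for small $\ell$ and telescoping to $\ell^{-2\theta}$ for $\ell$ up to $n'$) and then over $\ell \ge 1$ produces the overall bound $\EE(S^{(n)}_{n',m'} - \delta M^{(n)}_{m'+n'})^2 \ll K^2\,(n')^{1-2\theta}$, uniformly in $m'$. Since $\theta \in (0,1/2)$, this is at worst $K^2(n')^{1-\theta}$, establishing~(\ref{MartingaleApprox1}). Specialising to $m'=0$ together with (\ref{EqualForApprox}) gives (\ref{MartingaleApproxD1}) and (\ref{MartingaleApproxD2}); adding the two coordinate bounds yields (\ref{MartingaleApproxDvec}). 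The two-pair version follows without change because the orthogonality structure depends only on the innovations, not on the weighting vectors, so only the prefactor $K$ adapts to take the maximum over the two $\ell_1$-products.

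\medskip

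\noindent\emph{Expected main obstacle.} The bookkeeping for the pre-window contributions ($p \le m'$) is the delicate part, because uniformity in $m'$ requires that the mass of $A_{p,\ell}$ is actually concentrated in a shifted window $\{M+1,\dots,M+n'\}$ with $M = m'-p$, and one must leverage the polynomial decay in $j$ from~(D) to show summability independently of $m'$. The interplay between the decay in $j$ and in $\ell$ (both at rate $-3/4-\theta/2$, precisely the borderline rate in~(D)) forces the split $j\le\ell$ vs.\ $j>\ell$ and the use of $\theta<1/2$ in computing $\sum_{j=N+1}^{\ell} j^{-3/4-\theta/2} \asymp \ell^{1/4-\theta/2}$; everything else is routine.
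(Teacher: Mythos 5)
Your overall architecture is the same as the paper's (which follows Kouritzin's Lemma~2): write both $S_{n',m'}^{(n)}$ and $\delta M_{m'+n'}^{(n)}$ as bilinear forms in the innovations, use pairwise uncorrelatedness of the blocks $\{\epsilon_{np}^2-\sigma_{np}^2\}$ and $\{\epsilon_{np}\epsilon_{n,p-\ell}\}_{\ell\ge1}$ to turn the $L^2$-distance into a weighted sum of squared coefficients, and control those via Assumption~(D). Steps~1 and~2 are correct, and your identification of the in-window coefficient as $-\wt f^{(n)}_{\ell,N_p+1}$ with $N_p=m'+n'-p$ matches the paper's $O$- and $Q$-terms; the resulting bound $\sum_{N\le n'}\sum_{\ell}[\wt f^{(n)}_{\ell,N+1}]^2\ll K^2(n')^{1-2\theta}$ is fine (and slightly sharper than the $(n')^{1-\theta}$ the paper records).

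The gap is in your treatment of the pre-window block $p\le m'$. There the coefficient is $A_{p,\ell}=\wt f^{(n)}_{\ell,M+1}-\wt f^{(n)}_{\ell,M+n'+1}$ with $M=m'-p$ ranging over \emph{all} of $\{0,1,2,\dots\}$, and the bound you propose to sum, $|A_{p,\ell}|\ll K\max(M,\ell)^{-1/2-\theta}$, is just the head bound $|\wt f^{(n)}_{\ell,M+1}|$. Squaring and summing gives $\sum_{M\ge1}\sum_{\ell\ge1}\max(M,\ell)^{-1-2\theta}\asymp\sum_{M\ge1}M^{-2\theta}$, which diverges for every $\theta\le 1/2$, i.e.\ for the entire admissible range $\theta\in(0,1/2)$. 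So the step "summing these squared bounds first over $p$ and then over $\ell$" fails for this block. The missing ingredient is exactly the one the paper isolates as estimate (\ref{FCoeff1}), $\sum_{i\ge1}\sum_{\ell\ge0}(\wt f^{(n)}_{\ell,i}-\wt f^{(n)}_{\ell,i+n'})^2\ll K^2(n')^{1-\theta}$: for $M\gg n'$ one must use that $A_{p,\ell}$ is a sum of only $n'$ consecutive $f^{(n)}_{\ell,j}$ starting at $j=M+1$, hence of order $n'\,M^{-3/2-\theta}$ rather than $M^{-1/2-\theta}$, which is what makes the sum over $M$ converge uniformly in $m'$. You correctly flag this as the delicate point in your closing paragraph, but the bound you actually write down does not deliver it, so as stated the proof of (\ref{MartingaleApprox1}) is incomplete precisely where uniformity in $m'$ is at stake.
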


\begin{proof} See appendix. 
\end{proof} 

The next lemma studies the conditional covariances of the approximating martingales. It generalizes \cite[Lemma~2.2]{StelandSachs2018}  to  the change-point model and two different pairs of projection vectors.

\begin{lemma} 
	\label{KonvKovBeta}
	Suppose that the conditions of Lemma~\ref{CorollaryBetaSq} hold and $ \beta_n^2( \barr, \carr ) $ is as defined there. Then it holds for $ m', n' \ge 0 $ and $ n \ge 1 $ with $ K_n = \| \vecv_n \|_{\ell_1}  \| \vecw_n \|_{\ell_1} \| \wt\vecv_n \|_{\ell_1}  \| \wt\vecw_n \|_{\ell_1} $ 
	\begin{align*}
	 E_{n'}^{(n)} &= \left\|
	\EE\left[
	( \delta M_{m'+n'}^{(n)}( \barr, \vecv_n, \vecw_n ) )( \delta M_{m'+n'}^{(n)}( \carr, \wt\vecv_n, \wt\vecw_n ) )
	\mid \calF_{n,m'} \right] - n' \beta_n^2( \barr, \vecv_n, \vecw_n, \carr, \wt\vecv_n, \wt\vecw_n ) 
	\right\|_{L_1} \\
	& \stackrel{n,m',n'}{\ll} K_n (n')^{1-\theta/2} 
	\end{align*}
	and
	\begin{align*}
	& \left\|
	\EE
	\left[
	( S_{m',n'}^{(n)}( \barr, \vecv_n, \vecw_n ) )( S_{m',n'}^{(n)}( \carr, \wt\vecv_n, \wt\vecw_n ) )
	\mid \calF_{n,m'} \right] - n' \beta_n^2( \barr, \vecv_n, \vecw_n, \carr, \wt\vecv_n, \wt\vecw_n ) 
	\right\|_{L_1}  \\ \qquad &
	\stackrel{n,m',n'}{\ll} K_n (n')^{1-\theta/2}.
	\end{align*}
\end{lemma}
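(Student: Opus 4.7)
The strategy is to extend the unconditional variance calculation of Lemma~\ref{CorollaryBetaSq} by tracking which contributions become random under the conditional expectation, and then to transfer the bound from $\delta M_{m'+n'}^{(n)}$ to $S_{m',n'}^{(n)}$ via Lemma~\ref{LemmaMartingaleApprox}. First, I decompose each difference as $\delta M_{m'+n'}^{(n)}(\aarr,\vecv_n,\vecw_n) = D^{(\aarr)} + C^{(\aarr)}$, where $D^{(\aarr)} := \wt{f}_{0,0}^{(n)}(\aarr,\vecv_n,\vecw_n)\sum_{i=m'+1}^{m'+n'}(\epsilon_{ni}^2-\sigma_{ni}^2)$ and $C^{(\aarr)} := \sum_{i=m'+1}^{m'+n'}\epsilon_{ni}Z_i(\aarr)$ with $Z_i(\aarr) = \sum_{\ell\ge 1}\wt{f}_{\ell,0}^{(n)}(\aarr,\vecv_n,\vecw_n)\epsilon_{n,i-\ell}$, and split further $Z_i(\aarr) = Z_i^{\mathrm{f}}(\aarr) + Z_i^{\mathrm{p}}(\aarr)$ according to whether the lagged innovation is fresh ($i-\ell>m'$) or past ($i-\ell\le m'$), so that $Z_i^{\mathrm{p}}(\aarr) = \sum_{\ell\ge i-m'}\wt{f}_{\ell,0}^{(n)}(\aarr,\vecv_n,\vecw_n)\epsilon_{n,i-\ell}$ is $\calF_{n,m'}$-measurable.

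\textbf{Conditional expectation of the martingale-difference product.} Expanding $\delta M(\barr)\delta M(\carr)$ into the nine blocks $(D,C^{\mathrm{f}},C^{\mathrm{p}})\otimes(D',C'^{\mathrm{f}},C'^{\mathrm{p}})$ (primes denoting the $(\carr,\wt\vecv_n,\wt\vecw_n)$-pair) and applying $\EE[\cdot\mid\calF_{n,m'}]$, the independence of $\{\epsilon_{ni}\}_{i>m'}$ from $\calF_{n,m'}$ together with the mean-zero and four-point pairing constraints on monomials in the innovations eliminates every off-diagonal ($i\ne j$) contribution. The surviving $i=j$ contributions split into a \emph{deterministic main part}---the diagonal of $DD'$ plus $\sum_i\sigma_{ni}^2\sum_{\ell=1}^{i-m'-1}\wt{f}_{\ell,0}(\barr)\wt{f}_{\ell,0}(\carr)\sigma_{n,i-\ell}^2$ from $C^{\mathrm{f}}C'^{\mathrm{f}}$---which I will reduce to $n'\beta_n^2(\barr,\vecv_n,\vecw_n,\carr,\wt\vecv_n,\wt\vecw_n)$ up to an additive error of order $K_n(n')^{1-\theta}$ exactly as in the proof of Lemma~\ref{CorollaryBetaSq}, invoking (\ref{ConditionOnVariances})--(\ref{ConditionOnGammas}) and the Assumption~(D) tail estimate $|\wt{f}_{\ell,0}^{(n)}(\aarr,\vecv_n,\vecw_n)|\stackrel{n,\ell}{\ll}\|\vecv_n\|_{\ell_1}\|\vecw_n\|_{\ell_1}\ell^{-1/2-\theta}$. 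What remains after subtracting this main part is a \emph{random remainder} collecting all factors involving the past terms $Z_i^{\mathrm{p}}$.

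\textbf{Bounding the random remainder in $L_1$.} The dominant random piece is $R := \sum_{i=m'+1}^{m'+n'}\sigma_{ni}^2\,Z_i^{\mathrm{p}}(\barr)Z_i^{\mathrm{p}}(\carr)$ from the $C^{\mathrm{p}}C'^{\mathrm{p}}$ block, supplemented by linear-in-$Z^{\mathrm{p}}$ pieces from $DC'^{\mathrm{p}}$ and $C^{\mathrm{p}}D'$ carrying the third moments $\gamma_{ni}$. Orthogonality of past innovations together with the tail estimate above yields
\[
\|Z_i^{\mathrm{p}}(\aarr,\vecv_n,\vecw_n)\|_{L_2}^2 = \sum_{\ell\ge i-m'}\wt{f}_{\ell,0}^{(n)}(\aarr,\vecv_n,\vecw_n)^2\sigma_{n,i-\ell}^2 \stackrel{n,i,m'}{\ll} \|\vecv_n\|_{\ell_1}^2\|\vecw_n\|_{\ell_1}^2(i-m')^{-2\theta},
\]
so Cauchy--Schwarz gives $\|R\|_{L_1}\le\sum_i\sigma_{ni}^2\|Z_i^{\mathrm{p}}(\barr)\|_{L_2}\|Z_i^{\mathrm{p}}(\carr)\|_{L_2}\ll K_n\sum_{k=1}^{n'}k^{-2\theta}\ll K_n(n')^{1-2\theta}\le K_n(n')^{1-\theta/2}$. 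The linear-in-$Z^{\mathrm{p}}$ pieces are bounded analogously and yield the milder $K_n(n')^{1-\theta}$. Combined with the deterministic approximation error, this completes the first assertion.

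\textbf{Partial sums and the main obstacle.} For the second assertion I will write $S_{m',n'}^{(n)}(\aarr,\cdot,\cdot) = \delta M_{m'+n'}^{(n)}(\aarr,\cdot,\cdot) + E^{(\aarr)}$, where $\|E^{(\aarr)}\|_{L_2}\ll\|\vecv_n\|_{\ell_1}\|\vecw_n\|_{\ell_1}(n')^{(1-\theta)/2}$ by Lemma~\ref{LemmaMartingaleApprox} and $\|\delta M(\aarr,\cdot,\cdot)\|_{L_2}\ll\|\vecv_n\|_{\ell_1}\|\vecw_n\|_{\ell_1}(n')^{1/2}$ by (\ref{KonvCovMart_rate}). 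Expanding $S(\barr)S(\carr)$ into the martingale-difference product plus three residual cross and quadratic terms, applying $\EE[\cdot\mid\calF_{n,m'}]$ and Cauchy--Schwarz to each residual produces an $L_1$-error of exactly $K_n(n')^{1-\theta/2}$, matching the claim; the first assertion then supplies the main approximation. The main obstacle, I expect, lies in the careful bookkeeping of the nine blocks in the first step---in particular, identifying precisely the non-vanishing past-coupled contributions and handling the third-moment linear pieces---together with establishing the $L_2$-tail estimate on $Z_i^{\mathrm{p}}$ uniformly in $i$ and $m'$, which is ultimately responsible for the rate $(n')^{1-\theta/2}$ (slower than the unconditional $(n')^{1-\theta}$ of Lemma~\ref{CorollaryBetaSq}).
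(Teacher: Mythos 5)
Your proposal is correct and follows essentially the same route as the paper's proof: your split of $\delta M$ into the $\wt{f}_{0,0}$-part plus fresh and past lagged contributions is exactly the paper's decomposition $\Delta M_{m'+j}=\Delta L_j+\Delta K_j$, the surviving diagonal terms and the random remainder $\sum_i\sigma_{ni}^2 Z_i^{\mathrm{p}}(\barr)Z_i^{\mathrm{p}}(\carr)$ you identify coincide with the paper's $LL$-, $LK$- and $KK$-blocks, and the transfer to $S_{n',m'}^{(n)}$ via Lemma~\ref{LemmaMartingaleApprox} and Cauchy--Schwarz is the standard argument the paper delegates to Kouritzin. The only (immaterial) difference is that you bound $\sum_j\|Z_j^{\mathrm{p}}(\barr)\|_{L_2}\|Z_j^{\mathrm{p}}(\carr)\|_{L_2}$ by summing the tail rates directly, whereas the paper applies Jensen's inequality to the product of tail sums; both yield the claimed $K_n(n')^{1-\theta/2}$.
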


\begin{proof} See appendix.
\end{proof}

\subsection{Proofs of Subsection~\ref{SubSecStrongApprox}}

After the above preparations, we are now in a position to show  Theorem~\ref{BasicStrongApprox}.

\begin{proof}[Proof of Theorem~\ref{BasicStrongApprox}]
	Put 
	\begin{equation}
	\label{DefXiProof}
	\bfxi_{i}^{(n)} = \bfxi_{i}^{(n)}( \avb, \avc ) = \left( \begin{array}{cc} Y_{ni}( \avb ) Y_{ni}( \awb ) - \EE[ Y_{ni}( \avb ) Y_{ni}( \awb ) ]\\ Y_{ni}( \avc ) Y_{ni}( \awc ) - \EE[  Y_{ni}( \avc ) Y_{ni}( \awc ) ]\end{array}  \right),
	\end{equation}
	such that 
	$
	\matD_{nk} = \sum_{i \le k} \bfxi_{i}^{(n)},
	$
	for $ k \ge 1 $ and $ n \ge 1 $. Let us consider the bivariate
	extension of the sums $ S_{n',m'}^{(n)} $,
	\[
	\vecS_{n',m'}^{(n)} = ( S_{n',m'}^{(n)}(1), S_{n',m'}^{(n)}(2) )^\top = \sum_{k=m'+1}^{m'+n'} \bfxi_{k}^{(n)}, \qquad m', n' \ge 0.
	\]
Introduce the conditional covariance operators
	\[
	\vecC_{n',m'}^{(n)}( \vecu ) = \EE[ \vecu^\top S_{n',m'}^{(n)} S_{n',m'}^{(n)} | \calF_{n,m'} ], \qquad \vecu \in \R^2, \ n, n', m' \ge 1, 
	\]
	and the unconditional covariance operator associated to the Brownian motion $ \vecB^{(n)} $,
	\[
	\vecT^{(n)}( \vecu ) = \EE( \vecu^\top \vecB_n \vecB_n ),  \qquad \vecu \in \R^2, \ n \ge 1.
	\]		
	We shall verify \cite[Th.~1]{Philipp1986}, namely the validity of the following conditions: For $ m' \ge 0 $, $ n' \ge 1 $, 
	\begin{itemize}
		\item[(I)] $ \sup_{j \ge 1} \EE \| \bfxi_{j}^{(n)} \|_2^{2+\delta} < \infty $ for some $ \delta > 0 $.
		\item[(II)] For some $ \varepsilon > 0 $ it holds
		\[
		\EE \|  \EE( \vecS_{n',m'}^{(n)} \mid \calF_{n,m'} ) \|_2 \stackrel{n,n',m'}{\ll} (n')^{1/2-\varepsilon}
		\]
		\item[(III)] There exists a covariance operator $ \vecC $, namely $ \vecT^{(n)} $, such that the conditional covariance operator 
		$ \vecC_{n',m'}^{(n)} $ converges to $ \vecC $ in the semi-norm $ \| \cdot \|_{op} $ in expectation in the sense that 		for some $ \theta' > 0 $.
		\[
		\EE \| \vecC_{n'}( \cdot \mid \calF_{n,m'} ) - \vecC( \cdot ) \|_{op} \stackrel{n,n',m'}{\ll} (n')^{1-\theta'}.
		\]
	\end{itemize} 
	{\bf \em Remark:} As the construction is for fixed $n$, one could consider $ \stackrel{n',m'}{\ll} $ in (II) and (III). But since we are interested in $ n \to \infty $ and (II) and (III) yield the  moment convergence with rate for the partial sum $ \vecS_{n,0}^{(n)} $ of interest (for large $n$), we show $ \stackrel{n,n',m'}{\ll} $ and consider the case $ n' \ge n $. This includes the real sample size $n$ and (\ref{AssumptionL1Projections}) then ensures
	the bound $ \| \vecv_n \|_{\ell_1}^2 \| \vecw_n \|_{\ell_1}^2  (n')^{-\theta/2} = O( (n')^{-\theta'/2} ) $ we shall use.
		
	Write $ \bfxi_{i}^{(n)} = ( \bfxi_{i1}^{(n)}, \bfxi_{i2}^{(n)} )^\top $, $ i \ge 1 $, and observe that $ \bfxi_{ij}^{(n)} = \| \vecv_n \|_{\ell_1} \| \vecw \|_{\ell_1} \bfxi_{nij}^* $ where $ \bfxi_{nij}^* $ is obtained from $ \bfxi_{ij}^{(n)} $ by replacing $ \vecv_n $ by  $ \vecv_n^* = \vecv_n / \| \vecv_n  \|_{\ell_1}  $ and $ \vecw_n $ by $ \vecw_n^*= \vecw_n / \| \vecw_n \|_{\ell_1} $. The $ C_r $-inequality and Cauchy-Schwarz yield
	\begin{align*}
	\EE |  \bfxi_{i1}^{(n)} |^{2+\delta} & \le \| \vecv_n \|_{\ell_1}^{2+\delta} \| \vecw_n \|_{\ell_1}^{2+\delta} \EE( | Y_{ni}( \vecv_n^*{}^\top \vecb_n ) Y_{ni}( \vecw_n^*{}^\top \vecb_n )  |  + \EE | Y_{ni}(\vecv_n^*{}^\top \vecb_n ) Y_{ni}( \vecw_n^*{}^\top \vecb_n  ) | )^{2+\delta} \\
	& \le \| \vecv_n \|_{\ell_1}^{2+\delta} \| \vecw_n \|_{\ell_1}^{2+\delta} 2^{3+\delta} \EE | Y_{ni}( \vecv_n^*{}^\top \vecb_n ) Y_{ni}( \vecw_n^*{}^\top \vecb_n  ) |^{2+\delta} \\
	& \le \| \vecv_n \|_{\ell_1}^{2+\delta} \| \vecw_n \|_{\ell_1}^{2+\delta} 2^{3+\delta} \sqrt{ \EE | Y_{ni}( \vecv_n^*{}^\top \vecb_n  ) |^{4+2\delta} } \sqrt{ \EE | Y_{ni}(  \vecw_n^*{}^\top \vecb_n  ) |^{4+2\delta} },
	\end{align*}
	and the second component is estimated analogously.
	Following the arguments in \cite[p.~343]{Kouritzin1995}, for $ \delta' \in (0,2) $ and $ \chi = \delta'/2 $, one can show that
	for $ \aarr \in \{ \barr, \carr  \} $ and $ \vecu_n \in \{ \vecv_n^*, \vecw_n^* \} $
	\begin{align*}
	\EE | Y_{ni}( {\bm u}_n' {\bm a}_n ) |^{4+\delta'} &\le  
	\sup_{n, k \ge 0} \EE | \epsilon_{nk} | \sum_{\ell=0}^\infty | a_{n\ell}^{(u)} |^{2(2+\chi)} \\  &
	 + \sup_{n,k \ge 0} \EE( \epsilon_{nk}^2 ) \left\{ \sup_{n',k' \ge 0} \EE( \epsilon_{n'k'}^2 )  \right\}^{1+\chi} 
	\sum_{\ell=0}^\infty | a_{n\ell}^{(u)} |^2 \left\{ \sum_{\ell=0}^\infty | a_{n\ell}^{(u)} |^2  \right\}^{1+\chi}, 
	\end{align*}
	where $ a_{n\ell}^{(u)} = \sum_{\nu=1}^{d_n} a_{n\ell}^{(\nu)} u_{n\nu} \ll (\max(\ell, 1))^{-3/4-\theta/2} $, uniformly in uniformly $ \ell_1 $-bounded $ \vecu_n $  and $ n \ge 1 $, such that $ \sum_{\ell=0}^\infty | a_{n\ell}^{(u)} |^2 < \infty $ and, in turn, 
	$ \sum_{\ell=0}^\infty | a_{n\ell}^{(u)} |^{2(2+\chi)} < \infty $. Eventually, we obtain for any $ n $ 
	\begin{equation}
	\label{UniformMomentsXi}
		\max_{j=1,2}  \sup_{i \ge 1} \EE | \bfxi_{ij}^{(n)} |^{2+\delta} = O( \| \vecv_n \|_{\ell_1}^{2+\delta} \| \vecw_n \|_{\ell_1}^{2+\delta} ).
	\end{equation}
	Now Jensen's inequality yields
	\[
	\EE \| \bfxi_i^{(n)} \|_2^{2+\delta} = 2^{1+\delta/2} \EE\left( \frac{1}{2} \sum_{j=1,2} [\bfxi_{ij}^{(n)} ]^2 \right)^{1+\delta/2} 
	\le 2^{\delta/2} \sum_{j=1,2} \EE | \bfxi_{ij}^{(n)} |^{2+\delta} < \infty,
	\]
	verifying (I). To show (II) recall that the martingale approximation for 
	$ \vecS_{n',m'}^{(n)} = ( S_{n',m'}^{(n)}( \barr ), S_{n',m'}^{(n)}( \carr ) )^\top $
	is given by 
	$ \delta \vecM_{n',m'}^{(n)} = ( \delta M_{m'+n'}^{(n)}( \barr ), \delta M_{m'+n'}^{(n)}( \carr ) )^\top, $
	see Lemma~\ref{LemmaMartingaleApprox}. Using 
	\[ 
	\EE( \delta M_{m'+n'}^{(n)}( \barr ) \mid \calF_{n,m'} ) = 0, \ \text{and hence} \ 
	\EE( \vecS_{n',m'}^{(n)}  \mid \calF_{n,m'} ) = \EE( \vecS_{n',m'}^{(n)} - \delta \vecM_{n',m'}^{(n)}  \mid \calF_{n,m'} ), 
	\] 
	it follows that
	\[
	\EE \| \EE( \vecS_{n',m'}^{(n)}  \mid \calF_{n,m'} )  \|_2 \stackrel{n,n',m'}{\ll} \| \vecv_n \|_{\ell_1}^2 \| \vecw_n \|_{\ell_1}^2  (n')^{1/2-\theta/2} \stackrel{n,n',m'}{\ll} (n')^{1/2-\theta'/2},
	\]
	by Lemma~\ref{LemmaMartingaleApprox} and (\ref{AssumptionL1Projections}), such that (II) holds with $ \varepsilon = \theta'/2 $. It remains to show (III). 
	Observe that
	\begin{align*}
	& \left\| \frac{1}{n'} \vecC_{n',m'}^{(n)} - \vecT^{(n)} \right\|_{op} \\
	& \qquad =  \sup_{\vecu \in \R^2, \| \vecu \|_2=1} \left|  \vecu^\top \EE\left[  \frac{\vecS_{n',m'}^{(n)}}{ \sqrt{n'} } \frac{\vecS_{n',m'}^{(n)}{}^\top }{ \sqrt{n'} } \mid \calF_{n,m'}  \right] \vecu -  \sum_{j=1,2} u_j 
	\left( \begin{array}{cc} \Cov(B_{n1}, B_{nj} ) \\ \Cov(B_{n2}, B_{nj} ) \end{array} \right)^\top \vecu \right| \\
	& \qquad = \sup_{\vecu \in \R^2, \| \vecu \|_2=1} 
	\left|
	\sum_{i,j=1}^2 u_i u_j \left( \EE\left[ \frac{S_{n',m'}^{(n)}(i)}{ \sqrt{n'} } \frac{S_{n',m'}^{(n)}(j) } { \sqrt{n'} } \mid \calF_{n,m'}   \right] - \Cov( B_{ni}, B_{nj} ) \right)
	\right|.
	\end{align*}
	Noting that $ | u_i u_j | \le \max_{k} u_k^2 \le 1 $, we obtain
	\[
	\left\| \frac{1}{n'} \vecC_{n',m'}^{(n)} - \vecT^{(n)} \right\|_{op} 
	\le 4 \max_{1 \le i, j \le  2} \left|  
	\EE\left[ \frac{S_{n',m'}^{(n)}(i)}{ \sqrt{n'} } \frac{S_{n',m'}^{(n)}(j)}{ \sqrt{n'} } \mid \calF_{n,m'}   \right]
	- \Cov( B_{ni}, B_{nj} )
	\right|.
	\]
	Therefore, (III) follows, if 
	\[
	\EE \left|
	\EE\left[ \frac{S_{n',m'}^{(n)}(i)}{ \sqrt{n'} } \frac{S_{n',m'}^{(n)}(j)}{ \sqrt{n'} } \mid \calF_{n,m'}   \right]
	- \Cov( B_{ni}, B_{nj} ) 
	\right|  
	\stackrel{n,n',m'} \ll \| \vecv_n \|_{\ell_1}^2 \| \vecw_n \|_{\ell_1}^2 (n')^{-\theta/2},
	\]
	a.s., which is shown in Lemma~\ref{KonvKovBeta}, since then assumption (\ref{AssumptionL1Projections}) ensures the estimate $ \stackrel{n,n',m'}{\ll} (n')^{-\theta'/2} $. Hence, from \cite{Philipp1986}, we may conclude that there exists a constant $ C_n $ and a universal constant $ \lambda > 0 $, such that
	\begin{equation}
	\label{StrongApproxBivariate}
	\| \vecD_{nt} - \vecB_n(t) \|_2 \le C_n t^{1/2-\lambda} \qquad t > 0,
	\end{equation}
	a.s., which implies
	\begin{equation}
	\label{StrongApproxCoords}
	| \vecD_{nt}^{(i)} - \vecB_n^{(i)}(t) | \le \sqrt{2} C_n t^{1/2-\lambda}, \qquad t > 0,
	\end{equation}
	a.s., for $ i = 1, 2 $. Recalling that  $ D_{nt} = \vecv_n^\top ( \vecS_{nt} - \EE( \vecS_{nt} ) ) \vecw_n $ where
	$ \vecS_{nt} = \sum_{i \le t} \vecY_{ni} \vecY_{ni}^\top $ satisfies
	\begin{align*}
	\vecS_{nt} & = \eins( t \le \tau ) \sum_{i \le t} \vecY_{ni}( \barr ) \vecY_{ni}( \barr )^\top 
	+ \eins( t > \tau ) \left[ \sum_{i \le \tau} \vecY_{ni}( \barr ) \vecY_{ni}( \barr )^\top  + \sum_{i=\tau+1}^t \vecY_{ni}( \carr ) \vecY_{ni}( \carr )^\top 
	\right],
	\end{align*}
	we have the following crucial representation in terms of $ \vecD_{nt} $,
	\[
	D_{nt} = \vecD_{nt}^{(1)} \eins( t \le \tau ) + [ \vecD_{n\tau}^{(1)} + \vecD_{nt}^{(2)} - \vecD_{n\tau}^{(2)} ] \eins(t > \tau  ),
	\]
	for all $t$. Since
	\begin{align*}
	& D_{nt} - \{ \vecB_n^{(1)}(t) \eins( t \le \tau ) + [ \vecB_n^{(1)}(\tau) + \vecB_n^{(2)}(t) - \vecB_n^{(2)}(\tau) ]  \} \\
	& = \quad \left(  \vecD_{nt}^{(1)} - \vecB_n^{(1)}(t)  \right) \eins( t \le \tau ) + 
%	& \qquad 
	\left( \vecD_{n\tau}^{(1)} - \vecB_n^{(1)}(\tau) + \vecD_{nt}^{(2)} - \vecB_n^{(2)}(t) - \vecD_{nt}^{(2)}(\tau) + \vecB_n^{(2)}( \tau )  \right) \eins( t > \tau ),
	\end{align*}
	(\ref{StrongApproxCoords}) yields, by definition of $ G_n $, see (\ref{DefGaussPro}), 
	\begin{align*}
	| D_{nt} - G_n(t) | & = 
	| D_{nt} - \{ \vecB_n^{(1)}(t) \eins( t \le \tau ) + [ \vecB_n^{(1)}(\tau) + \vecB_n^{(2)}(t) - \vecB_n^{(2)}(\tau) ]  \} | 
 \le 3 \sqrt{2} C_n t^{1/2-\lambda},
	\end{align*}
	for $ t > 0 $, a.s.. This implies 
	\begin{equation}
	\label{FirstApproxBound}
	\frac{1}{ \sqrt{n} } \max_{1 \le k < n} |  D_{nk} - G_n(k) | \le \frac{1}{\sqrt{n}} C_n \max_{1 \le k < n} k^{1/2-\lambda} 
	\le 3 \sqrt{2} C_n n^{-\lambda},
	\end{equation}
	as $ n \to \infty $, a.s., which in turn leads to (iii), since
	\begin{align*}
	\frac{1}{\sqrt{n}} \max_{1 \le k < n} \left|  D_{nk} - \frac{k}{n} D_{nn} - G_n^0(k) \right| 
	& = 
	\frac{1}{\sqrt{n}} \max_{1 \le k < n} \left|  D_{nk} - \frac{k}{n} D_{nn} - \left[  G_{n}(k) - \frac{k}{n} G_{n}(n)   \right] \right| 
	\le 6 \sqrt{2} C_n n^{-\lambda},
	\end{align*}
	as $ n \to \infty $, a.s., and (iv) follows from the reverse triangle inequality. Recalling that $ U_{nk} = \EE( U_{nk} ) + D_{nk} $ and $ U_{nk} - \frac{k}{n}  U_{nn} = m_n(k) + D_{nk} - \frac{k}{n} D_{nn} $, we obtain
	\[
	\frac{1}{\sqrt{n}} \max_{1 \le k < n} \left|  U_{nk} - \frac{k}{n} U_{nn} - \left[ m_n(k) + G_n^0(k) \right] \right|
	\le 6 \sqrt{2} C_n n^{-\lambda},
	\]
	as $ n \to \infty $, a.s., which shows (v). (vi) now follows easily from the reverse triangle inequality. 
%	These bounds extend easily to weights $ g(k/n) $ in the denominator, when considering the maximum over $ n_0 \le k \le n_1 $ with $ n_i = \trunc{n t_i} $, $ i = 0, 1 $, for  $ 0 < t_0 < t_1 < 1 $, since $ g(k/n) \ge g_n^\star := \min \{ g(k_0/n), g( k_1/n ) \} \to \min \{ g(t_0), g(t_1) \} > 0 $. Indeed, (\ref{FirstApproxBound}) now becomes $ n^{-1/2} \max_{n_0 \le k \le n_1} | D_{nk} - G_n(k) | / g(k/n) \le 3 \sqrt{2} (g_n^\star)^{-1} C_n n^{-\lambda} $.	Clearly, under the stated condition all above upper bounds are $ o(1) $, a.s.. 
For a weight function $g$ satisfying (\ref{AssumptionWeightFunction}) the arguments are more involved and as follows:  Let $ \gamma_n $ be a  non-decreasing sequence specified later. Then, using $ g(t) / [t(1-t)]^\beta \ge C_g $ and $ n^2 /(k(n-k)) \le 2 n/k $ for $ 1 \le k \le n/2 $, we obtain a.s.
	\begin{align*}
	& \max_{ \varepsilon n / \gamma_n \le k \le n/2} \frac{1}{\sqrt{n} g(k/n)  } \left| D_{nk} - \frac{k}{n} D_{nn} - G_n^0(k) \right|    \\
	& \qquad \le C_g^{-1}
	  \max_{ \varepsilon n / \gamma_n \le k \le n/2}   \left( \frac{n}{k} \frac{n}{n-k} \right)^\beta\frac{1}{\sqrt{n} } \left| D_{nk} - \frac{k}{n} D_{nn} - G_n^0(k) \right|  \\
	  &\qquad \le C_g^{-1} (2/\varepsilon)^\beta \gamma_n^\beta \max_{1 \le k < n} \frac{1}{\sqrt{n} } \left| D_{nk} - \frac{k}{n} D_{nn} - G_n^0(k) \right| \\
	  & \qquad\le 3 \sqrt{2} C_g^{-1} (2/\varepsilon)^\beta \gamma_n^\beta C_n n^{-\lambda}. 
	 \end{align*}
	The maximum over $ n/2 \le k \le (1-\varepsilon/\gamma_n) n $ is estimated analogously leading to
	\[
	  \max_{ \varepsilon n / \gamma_n \le k \le (1-\varepsilon/\gamma_n) n} \left( \frac{n}{k} \frac{n}{n-k} \right)^\beta \frac{1}{\sqrt{n} } \left| D_{nk} - \frac{k}{n} D_{nn} - G_n^0(k) \right|  = O(  \gamma_n^\beta C_n n^{-\lambda} ), a.s..
	\]
	The right-hand side is $ o(1) $, a.s., if we put $ \gamma_n = n^{0.1 \lambda/\beta} $. 
	Further,  the technical results of the appendix and the H\'ajek-R\'enyi inequality for martingale differences yield for any $ \delta > 0 $ the tail bound
	\begin{align*}
	  \PP\left( \max_{1 \le k \le n \varepsilon / \gamma_n} \left( \frac{n}{k} \right)^\beta \frac{1}{\sqrt{n}} \left| D_{nk} - \frac{k}{n} D_{nn} \right| \ge \delta  \right) 
	  & = O\left( \frac{ n^{2\beta-1} }{ (\delta/2)^2 } \sum_{k=1}^{ n \varepsilon / \gamma_n } k^{-2\beta}   \right) + O\left( \frac{n^{1-\theta}}{\delta^2 n} \right) \\
	  & = O\left( (\varepsilon)^{1-2\beta} \gamma_n^{2\beta-1} ( \log(n \varepsilon / \gamma_n  ) + 1)^{2\beta} \right) + o(1) \\
	  & = O\left( (\varepsilon)^{1-2\beta} n^{0.1(2\beta-1)\lambda/\beta} ( \log(n) +1 )^{2\beta} \right) +  o(1).
	\end{align*}
	The first term tends to $0$, as $ \varepsilon \to 0 $, uniformly in  $n$, since $ \beta < 1/2 $. Let $ B^0 $ be a Brownian bridge and note that $ \{ \alpha_n ^{-1}(\barr) \overline{G}_n^0(t) : 0 \le t \le \vartheta \} \stackrel{d}{=} \{ B^0(t) : 0 \le t \le \vartheta \} $.
	Using the estimates $ \sqrt{t}/t^\beta \le (\varepsilon / \gamma_n )^{1/2-\beta} $ and $ \log_2(1/t) \le \log_2(n) $ on $ t \in \calG_n = \{ 1/n, \dots, \lfloor  \varepsilon n / \gamma_n \rfloor / n \} $ the law of the iterated logarithm for the Brownian bridge, \cite[p.72]{ShorackWellner1986}, 
	 entails for $ \delta > 0 $ and $ \varepsilon / \gamma_n \le \vartheta $   (thus for large $n$)
	\begin{align*}
	  \PP\left( \max_{1 \le k \le n \varepsilon/\gamma_n } \left(\frac{n}{k}\right)^\beta \frac{| n^{-1/2} G_n^0(k) | }{  \alpha_n(\barr) } > \delta  \right) 
	  & \le \PP\left( \sup_{t \in \calG_n}  \frac{| B^0(t) |}{ \sqrt{2 t \log_2(1/t)}} > \frac{\delta}{ \alpha_n(\barr) \sqrt{2 \log_2(n)} ( \varepsilon / \gamma_n )^{1/2-\beta} } \right) \\ &= o(1),
	\end{align*}
	by our choice of $ \gamma_n $ and since $ \beta < 1/2 $.
	The corresponding tail probabilities for the maximum over $ (1-\varepsilon/\gamma_n) n \le k \le n$ are treated analogously. Combining the above estimates  shows (\ref{MainForWeightedCUSUM1}). 
\end{proof}

\begin{proof}[Proof of Theorem~\ref{TestConsistency}]
	
		See appendix.
\end{proof}

\begin{proof}[Proof of Theorem~\ref{BasicStrongApproxCentered}]
	
	See appendix.
 \end{proof}

\begin{proof}[Proof of Theorem~\ref{ApproxGrowing}]
	Since $ \wt\vecv_n = a_n^{-1} \vecv_n $ and $ \wt\vecw_n = b_n^{-1} \vecw_n $ satisfy property (\ref{AssumptionL1Projections}) and  \[ D_{nk}( g; a_n^{-1} \vecv_n, b_n^{-1} \vecw_n ) = a_n^{-1} b_n^{-1} D_{nk}( g;  \vecv_n, \vecw_n ), \] we may conclude that $ T_n(g; \vecv_n, \vecw_n) = T_n(g; \wt\vecv_n, \wt\vecw_n) $. Consequently, all approximations for $ T_n $ carry over. In particular, we obtain under the conditions of Theorem~\ref{BasicStrongApprox}, cf. (\ref{MainForWeightedCUSUM}), 
	\[ \left| T_n(g; \vecv_n, \vecw_n) - \max_{1 \le k < n} \frac{1}{g(k/n)} \biggl| \frac{m_n(k)}{\sqrt{n}} + \overline{B}_n^0(k/n) \biggr| \right| = o_{\PP}(1). \]
	Note that $ \overline{B}_n = \alpha_n^{-1}( \barr ) \overline{G}_n^0(t) $ is a standard Brownian on $ [0, \vartheta] $, cf.  (\ref{Cov_Gn0_prechange}),
	whereas the scale factor changes from $1$ to $ \alpha_n( \carr ) / \alpha_n( \barr ) $ on $ (\vartheta,1] $.
	This shows (\ref{MainForWeightedCUSUM2}) for $ \ell_1$-bounded projections. The proof for uniformly $ \ell_2 $-bounded projections uses the scaling $ a_n = b_n = d_n $ and the fact that by Jensen's inequality gives $ \| \wt\vecv_n \|_{\ell_1} \le \left( \frac{1}{d_n} \sum_{\nu=1}^\infty w_{n\nu}^2 \right) $, where the sum is finite by assumption and the factor cancels by standardization, see also \cite{StelandSachs2018}.
\end{proof}

\begin{proof}[Proof of Theorem~\ref{FCLT}]
	The conditions on $g$ ensure that $ \sup_{0<t<1} | B^0(t) | / g(t) $ is well defined, see \cite{CsoergoeHorvath1993}. Further, $ \{ G_n^0(t) / \alpha^2(\barr) : 0 \le t \le \tau \}  \stackrel{d}{=} \{ B^0(t) : 0 \le t \le \tau \} $ and $  \{ G_n^0(t) / \alpha^2(\carr) : \tau < t \le 1 \}  \stackrel{d}{=} \{ B^0(t) : \tau <  t \le 1 \} $ for each $n$. Therefore, combining these facts, L\'evy's modulus of continuity, $ \omega_{B^0}(a) = \sup_{0 \le t-s \le a} |B^0(t)-B^0(s)| $, of a Brownian bridge $B^0 $, i.e. $ \lim_{a \downarrow 0} \omega_{B^0}(a) / \sqrt{2 a \log(1/a)} = 1 $, a.s.,  and the continuous mapping theorem the result follows from (\ref{MainForWeightedCUSUM})
\end{proof}

\begin{proof}[Proof of Theorem~\ref{MultivCUSUMApprox}]
	Let us stack the statistics $ D_{nk}( \vecv_{nj}, \vecw_{nj} ) $, as defined in (\ref{Def_vecD}), yielding the $ 2L_n $-dimensional random vector
	\[
	\vecD_{nk} = \left(  \begin{array}{c} \vecD_{nk}( \vecv_{n1}, \vecw_{n1}) \\ \cdots \\ \vecD_{nk}( \vecv_{nL}, \vecw_{nL} ) \end{array}  \right) = \sum_{i \le k} \bfxi_i^{(n)}, \qquad k \ge 1, \quad \bfxi_i^{(n)} = \left( \bfxi_{ni}^{(n)}(j) \right)_{j=1}^L.
	\]
	Also put $ \vecS_{n',m'}^{(n)} = \sum_{k=m'+1}^{m'+n'} \bfxi_k^{(n)} $, $ n', m' \ge 0, n \ge 1 $.
	For sparseness of notation, we use the same symbols $ \vecD_{nk}, \vecS_{n',m'}^{(n)} $ and $ \bfxi_k^{(n)} $ and note that the quantities studied here coincide with the previous definitions if $ L_n = 1 $. We work in the Hilbert space $ \R^{2L_n} $ and show (I) - (III) when $ L_n \to \infty $, so that the additional scaling with $ L_n^{-1/2} $, which can be attached to the $ \bfxi_i^{(n)} $'s or put in front of the sums, is in effect.  The equivalence of the vector norms $ \| \cdot \|_2 $ and $ \| \cdot \|_\infty $ - recall that $ \| \cdot \|_\infty \le \| \cdot \|_2 $ and $ \| \cdot \|_2 \le \sqrt{L_n} \| \cdot \|_\infty $ - and Jensen's inequality yield, in view of (\ref{UniformMomentsXi}), 
	\begin{align*}
	\sup_{i \ge 1} \EE \| L_n^{-1/2} \bfxi_i^{(n)} \|_2^{2+\delta}
	 = \sup_{i \ge 1}  \EE \left[ \frac{1}{L_n} \sum_{j=1}^{L_n} \| \bfxi_i^{(n)} \|_2^2 \right]^{(2+\delta)/2} 
	 \le \sup_{i \ge 1} \frac{1}{L_n} \sum_{j=1}^{L_n} \EE \| \bfxi_i^{(n)} \|_2^{2+\delta} < \infty,
	\end{align*}
	since the bounds for $ \EE \| \bfxi_i^{(n)} \|_2^{2+\delta}  $ obtained above and leading to (\ref{UniformMomentsXi}) are uniform in $ i \ge 1 $ and uniform over the considered sets of projection vectors and coefficient arrays. This shows (I).  (II) follows from
	\begin{align*}
	\EE \left\| \EE\left( \vecS_{n',m'}^{(n)} \, | \, \calF_{n,m'} \right)  \right\|_2
	& \le L_n^{1/2} \EE \left\| \EE\left( \vecS_{n',m'}^{(n)} \, | \, \calF_{n,m'} \right)  \right\|_\infty \\
	& \le L_n^{1/2} \EE  \left( \max_{1 \le \ell \le L}  \left | \EE\left( \vecS_{n',m'}^{(n)}(\ell)_1 \, | \, \calF_{n,m'} \right) \right| + \left| \EE\left( \vecS_{n',m'}^{(n)}(\ell)_2 \, | \, \calF_{n,m'} \right) \right| \right)  \\
	& \le  L_n^{1/2} \EE  \left( \left\|  \EE\left( \vecS_{n',m'}^{(n)}(\cdot)_1 \, | \, \calF_{n,m'} \right) \right\|_2  + \left\|  \EE\left( \vecS_{n',m'}^{(n)}(\cdot)_2 \, | \, \calF_{n,m'} \right) \right\|_2\right),
	\end{align*}
	such that $ \EE \left\| \EE\left( L_n^{-1/2} \vecS_{n',m'}^{(n)} \, | \, \calF_{n,m'} \right)  \right\|_2 \stackrel{n,n',m'}{\ll} \| \vecv_n \|_{\ell_1}^2 \| \vecw_n \|_{\ell_1}^2  (n')^{-1/2-\theta/2} \stackrel{n,n',m'}{\ll} (n')^{-1/2-\theta'/2} $, by
	the assumptions on the growth of $\| \vecv_n \|_{\ell_1}^2 \| \vecw_n \|_{\ell_1}^2 $ .
	Next consider the conditional covariance operators 
	\[ 
		C_{n'm'}^{(n)}( \vecu ) = \EE\left( \vecu^\top (L_n^{-1/2} \vecS_{n',m'}^{(n)} ) (L_n^{-1/2} \vecS_{n',m'}^{(n)} | \calF_{n,m'} ) \right), \qquad  \vecu \in \R^{2 L_n}, 
	\] 
	and the covariance operator $ \vecT^{(n)}( \vecu ) = \EE(  \vecu^\top \vecB_n \vecB_n ) $, $ \vecu \in \R^{2L_n} $. We need to estimate the operator norm of their difference and use Lemma~\ref{KonvKovBeta} and similar arguments as in the proof of Theorem~2.2 of \cite{StelandSachs2018}. 
	Denote the $ \nu $th coordinate of $ \vecS_{n',m'}^{(n)} $ corresponding to the weighting vectors $ \vecv_n(\nu) $ and $ \vecw_n( \nu ) $ 
	by $ \vecS_{n',m'}^{(n)}(\nu) $ and let  \[ C_{n'm'}^{(n)}( \nu, \mu ) = \EE(  (L_n^{-1/2} \vecS_{n',m'}^{(n)}(\nu) ) (L_n^{-1/2} \vecS_{n',m'}^{(n)}(\mu) | \calF_{n,m'}  ) ). \]
	By Lemma~\ref{KonvKovBeta}
	\[
	  \EE \max_{1 \le \nu, \mu \le 2 L_n} \left| C_{n'm'}^{(n)}( \nu, \mu ) - \EE( \vecB_n(\nu) \vecB_n(\mu ) ) \right| \ll L_n^{-1} K_n (n')^{\theta/2} \ll L_n^{-1} (n')^{-\theta'/2},
	\]
	where $ \EE( \vecB_n(\nu) \vecB_n(\mu ) ) = L_n^{-1} \beta_n^2( \barr, \vecv_n(\nu), \vecw_n(\nu), \carr, \vecv_n(\mu), \vecw_n(\mu ) ) $. 
	 Using the well known estimate $ | \sum_{i,j} a_{ij} x_i x_j | \le L_n \| \vecx \|_2^2 \max_{i,j} |a_{ij} | $ for $ \vecx = (x_1, \ldots, x_{L_n}) \in \R^k $ and $ a_{ij} \in \R $, $ 1 \le i, j \le L_n $, we therefore obtain 
	\begin{align*}
	\EE \left\| (n')^{-1}C_{n'm'}^{(n)} - \vecT^{(n)}  \right\|_{op} 
	& = \EE \sup_{\vecu \in \R^{2L_n}, \| \vecu \|_2 = 1} \left| \vecu^\top\left( (n')^{-1}C_{n'm'}^{(n)} - \vecT^{(n)}  \right) \vecu  \right|  
	 \stackrel{n}{\ll} (n')^{-\theta'/2},
	\end{align*}
	which establishes condition (III). 	Hence, from \cite{Philipp1986}, we may conclude that there exists a constant $ C_n $ and a universal constant $ \lambda > 0 $, such that on a new probability space for an equivalent version of $ \vecD_{nt} $ and a Brownian motion as described in the theorem
	\[
 	\| \vecD_{nt} - \vecB_n(t) \|_2 \le C_n t^{1/2-\lambda} \qquad t > 0,
	\]
	a.s.. The proof can now be completed along the lines of the proof of Theorem~\ref{BasicStrongApprox} with $ (\vecG_n, \vecG_n^0) $ instead of $ (G_n, G_n^0) $ by arguing coordinate-wise leading to
	\[
	  \left| L_n^{-1/2} C_n( \vecv_{nj}, \vecw_{nj} ) - \max_{1 \le k < n} \frac{1}{\sqrt{n}} | m_{nj}( k ) - G_{nj}^0(k) \right| \le 6 \sqrt{2} C_n n^{-\lambda},
	\]
	where the upper bound does not depend on $j$, which establishes (\ref{StrongApproxMultiv}). For a positive weight function a similar bound applies when considering CUSUMs taking the maximum over $ \{ n_0, \dots,  n_1 \} $ for $ n_i = \trunc{nt_i } $, $ i = 1, 2 $. For a weight function $g$ satisfying  (\ref{AssumptionWeightFunction}) and CUSUMs taking the maximum over $ \{ 1, \ldots, n-1 \} $ the required LIL tail bound and the martingale approximation used to apply the H\'ajek-R\'enyi inequality do not depend on $1 \le j \le L_n $ or $ L_n $, such that 
	\[
		\max_{j \le L_n} \PP \left( \left| L_n^{-1/2} C_n^g( \vecv_{nj}, \vecw_{nj} ) - \max_{1 \le k < n} \frac{1}{\sqrt{n}g(k/n)} | m_{nj}( k ) - G_{nj}^0(k) \right| > \delta \right) = o(1),
	\]
	for any $ \delta > 0 $.
\end{proof}

\subsection{Consistency of nuisance estimators}

\begin{proof}[Proof of Theorem~\ref{LLN_EstAlpha_Uniform}] 
Fix $ 0 < \varepsilon < \vartheta $. We can and will assume that $n$ is large enough to ensure that $ \trunc{n\varepsilon} \ge 1 $ and $ \trunc{n \vartheta} > h $. 
	Denote by $ \wh{\Gamma}_n(h;d) $ the estimator $ \wh{\Gamma}_n(h) $ regarding the dimension $d$ as a formal parameter such that $ \wh{\Gamma}_n(h) = \wh{\Gamma}_n(h;d) \bigr|_{d=d_n} $. In the same vain we proceed for  $ \wh{\beta}_n^2 $ and all other statistics arising below and write $ \wh{\beta}_n^2(d) $ etc. The assertion will then follow by showing that the consistency is uniform in the dimension $d$.   
	By assumption $ z_{ni}^{(j)} = \vecv_{nj}^\top \vecY_{ni} \vecw_{nj}^\top \vecY_{ni} - \EE(\vecv_{nj}^\top \vecY_{ni} \vecw_{nj}^\top \vecY_{ni}) $ satisfies $ z_{ni}^{(j)} = \vecv_{nj}^\top \vecY_{ni}( \barr ) \vecw_{nj}^\top \vecY_{ni}( \barr ) =: z_{ni}^{(j)}( \barr)  $ for $ i \le \tau $ and $ z_{ni}^{(j)}  = \vecv_{nj}^\top \vecY_{ni}( \carr ) \vecw_{nj}^\top \vecY_{ni}( \carr ) =: z_{ni}^{(j)}( \carr ) $ if $ i > \tau $. Put $ \xi_{ni}^{(j)} = z_{ni}^{(j)} - \EE( z_{ni}^{(j)} ) $ and again let $ \xi_{ni}^{(j)}( \barr ) = \xi_{ni}^{(j)}  $, if $ i \le \tau $, and $ \xi_{ni}^{(j)}( \carr ) = \xi_{ni}^{(j)} $, if $ \tau < i \le n $.  By Lemma~\ref{CorollaryBetaSq} and Lemma~\ref{LemmaMartingaleApprox}, $ \beta_n^2(j,k) = n^{-1} \Cov( \vecv_{nj}^\top \matS_{nn} \vecw_{nj}, \vecv_{nk}^\top \matS_{nn} \vecw_{nk}) + R_n $ with $ \EE(R_n^2) = O(n^{-\theta} ) $. Combining this with (\ref{UnifBoundedSumMoments}), we obtain
	$
	 \beta_n^2(j,k) = \sum_{h \in \Z} \EE\left( \xi_{n0}^{(j)} \xi_{n,|h|}^{(k)} \right) + R_n + o(1).
	$
	Without loss of generality we fix $ (j,k) = (1,2) $ and show that $ \sum_{h \in \Z} \wh{\Gamma}_n(h, 1, 2) - \sum_{h \in \Z} \EE( \xi_{n0}^{(1)} \xi_{n,|h|}^{(2)} )  = o(1) $, as $ n \to \infty $, where
	\[
	\wt{\Gamma}_n(u;h) = \wt{\Gamma}_n(u;h,d) = \frac{1}{\trunc{nu}} \sum_{i=1}^{\trunc{nu}-h} \xi_{ni}^{(1)}  \xi_{n,i+h}^{(2)}.
	\]
	Here and in the sequel we omit the dependence of $\wt{\Gamma}_n(u;h) $ and related quantities (namely $ \wt{\Gamma}_n(u;h,d) $ and $ \Gamma(u;h,d) $  introduced below) on $ 1,2 $, for sake of readability. 
	
	Observe that for $ h \ge 0 $
	\begin{align*}
	& \wt{\Gamma}_n(u;h,d) \\
	& = \eins( u \le \vartheta) \frac{1}{\trunc{n u }} \sum_{i=1}^{\trunc{nu}-h} 
	\xi_{ni}^{(1)}( \barr ) \xi_{n,i+h}^{(2)}( \barr ) %\\ & \qquad 
	+ \eins( u > \vartheta) \biggl\{ \frac{\trunc{n\vartheta}-h}{\trunc{nu}}  \frac{1}{\trunc{n \vartheta}-h} \sum_{i=1}^{\trunc{n\vartheta}-h} \xi_{ni}^{(1)}(\barr) \xi_{n,i+h}^{(2)}(\barr)  \\ 
	&  + \frac{h}{\trunc{nu}} \frac{1}{h} \sum_{i=\trunc{n\vartheta}-h+1}^{\trunc{n\vartheta}}  \xi_{ni}^{(1)}(\barr) \xi_{n,i+h}^{(2)}(\carr) 
	%\\ 	& \qquad \quad 
	+ \frac{\trunc{nu}-\trunc{n\vartheta}-h}{\trunc{nu}} \frac{1}{n-\trunc{n\vartheta}-h}  \sum_{i=\trunc{n\vartheta}+1}^{\trunc{nu}-h} 
	\xi_{ni}^{(1)}( \carr ) \xi_{n,i+h}^{(2)}( \carr ) \biggr\}.
	\end{align*}
	Define for $ |h| \le m_n $ and $ \aarr \in \{ \barr, \carr \} $
	\[
	\Gamma(h, d, \aarr ) = \EE\left( \xi_{ni}^{(1)}(\aarr)  \xi_{n,i+|h|}^{(2)}(\aarr) \right).	  
	\]
	Then for $ 0 \le h \le m_n $,
	\begin{align*}
	& \EE(  \wt{\Gamma}_n(u;h,d) ) \\  &= \eins( u \le \vartheta ) \frac{\trunc{nu}-h}{\trunc{nu}}  \Gamma(h, d, \barr )  
	% \\	& \quad 
	+ \eins( u > \vartheta ) \biggl( \frac{\trunc{n\vartheta}-h}{\trunc{nu}} \Gamma(h, d, \barr) + \frac{h}{\trunc{nu}} \EE\left( \xi_{ni}^{(1)}(\barr)  \xi_{n,i+h}^{(2)}(\carr) \right) \\
	& + \frac{\trunc{nu}-\trunc{n\vartheta}-h}{\trunc{nu}} \Gamma(h, d, \carr) 
	\biggr).
	\end{align*}
	Using $ |h| \le m_n = o(n) $ and $ | \frac{ \trunc{na} }{ \trunc{nb} } - a/b | = O( b| \trunc{na}/n - a| + a | \trunc{nb}/n - b| ) = O(1/n) = o(m_n^{-1} ) $ for $ 0 < \varepsilon \le  a, b $, uniformly in $ a, b $,  we obtain
	\[
	\EE(  \wt{\Gamma}_n(u;h) ) = \eins( u \le \vartheta ) \Gamma(h, \barr )  + \eins( u > \vartheta)( (\vartheta/u) \Gamma(h,\barr) + (1 - \vartheta/u) \Gamma(h,\carr) + o( m_n^{-1} )),
	\]
	as $ n \to \infty $, for $ |h| \le m_n $, where the $ o(1) $ term is uniform in $ |h| \le m_n $ and $ u \in [\varepsilon,1] $. Consequently,
	\begin{align*}
	& \sum_{|h| \le m_n} w_{mh} \EE(  \wt{\Gamma}_n(u;h,d) ) 
	 = \eins(u \le \vartheta) \sum_{|h| \le m_n} w_{mh} \Gamma(h,d,\barr)  \\
	& \qquad  + \eins( u > \vartheta) \left(  (\vartheta/u) \sum_{|h| \le m_n} w_{mh} \Gamma(h,d,\barr) + (1-\vartheta/u)  \sum_{|h| \le m_n} w_{mh} \Gamma(h,d,\carr)  \right) + o(1),
	\end{align*}
	as $ n \to \infty $, where the $ o(1) $ term is uniform in $ d \in \N $ and $ u \in [\varepsilon,1] $, such that
	\begin{equation}
	\label{EstimationBn}
	\sup_{u \in [\varepsilon,1]} \sup_{d \in \N} \max_{|h| \le m_n} \left| \sum_{|h| \le m_n} w_{mh} \EE(  \wt{\Gamma}_n(u;h,d) ) - \sum_{| h | \le m_n} w_{mh}  \Gamma(u;h,d) \right| = o(1),
	\end{equation}
	as $ n \to \infty $, where 
	\[ 
	\Gamma(u;h,d) =  \eins(u \le \vartheta) \Gamma(h,d,\barr) + \eins( u > \vartheta)  ( (\vartheta/u) \Gamma(h,d,\barr)  + (1-\vartheta/u)  \Gamma(h,d,\carr) )
	\]
	for $ u \in [\varepsilon, 1] $.
	As in \cite[Th.~4.4]{StelandSachs2017} one can show that 
	\begin{equation} 
	\label{UnifBoundedSumMoments}
	\sum_{h \in \Z} \sup_{d \in \N} \left| \EE\left( \xi_1^{(1)}(\aarr) \xi_{1+h}^{(2)}(\aarr) \right) \right| < \infty 
	\end{equation}
	for $ \aarr \in \{ \barr, \carr \} $ as well as $ \beta^2(\barr) = \sum_{h \in \Z} \Gamma(h, d, \barr) $ and $ \beta^2(\carr) = \sum_{h \in \Z} \Gamma(h, d, \carr) $. This implies 
	\begin{equation}
	\label{ConsistencySeriesConverges}
	\sup_{u \in [\varepsilon,1]}  \sup_{d \ge 1} \sum_{h \in \Z} | \Gamma(u;h,d) | < \infty,
	\end{equation}
	since
	$
	\sum_{h \in \Z} | \Gamma(u;h,d) | \le 2 \sum_{h \in  \Z} | \Gamma(h,d,\barr) | + \sum_{h \in  \Z} | \Gamma(h,d,\carr) |.
	$ Therefore, we may further conclude that 
	\begin{align*}
	\beta^2(u;d) := & \sum_{h \in \Z} \EE(  \wt{\Gamma}_n(u;h,d) ) 
	= \eins(u \le \vartheta) \sum_{h \in \Z} \Gamma(h,d,\barr)  \\
	& \qquad  + \eins( u > \vartheta) \left(  (\vartheta/u) \sum_{h \in \Z} \Gamma(h,d,\barr) + (1-\vartheta/u)  \sum_{h \in \Z}  \Gamma(h,d,\carr)  \right) + o(1),
	\end{align*}
	yielding the representation
	\begin{equation}
	\label{RepresAlpha2}
	\beta^2_n(u;d)  = \sum_{h \in \Z} \Gamma(u;h,d) + o(1)
	\end{equation}
	%where $  \Gamma(u;h,d) = \eins(u \le \vartheta) \Gamma(h,d,\barr) +  \eins( u > \vartheta) [ (\vartheta/u) \Gamma(h,d,\barr) + (1-\vartheta/u) \Gamma(h,d,\carr) ] $, 
	as well as
	\begin{equation}
	\label{RepresAlpha3}
	\beta_n^2(u;d) = \beta^2(u;d) + o(1),
	\end{equation}
	as $ n \to \infty $, uniformly in $ d \in \N $ and $ u \in [\varepsilon,1] $, where
	\[
	\beta^2(u;d) = \sigma^2( u; d, \barr, \carr)  = \eins(u \le \vartheta) \beta^2(\barr) + \eins( u > \vartheta ) ( (\vartheta/u) \beta^2(\barr) + (1-\vartheta/u) \beta^2(\carr) ).
	\]
	The arguments used in the proof of \cite[Th.~4.4]{StelandSachs2017}  to obtain (A.11) therein show that, if applied to the subseries $ \{ \xi_{ni}^{(j)} : 1 \le i \le \trunc{n\vartheta} \} $ and $ \{ \xi_{ni}^{(j)} : \trunc{n \vartheta}+1 \le i \le n-h \} $,
	\begin{equation}
	\label{ConsistencyOrder1}
	\left\| \sum_{i=1}^{\trunc{nu}} \xi_{ni}^{(j)}( \barr ) \right\|_{L_2}^2 = C_1 \trunc{nu}, u \le \vartheta,
	\quad \text{and} \quad
	\left\| \sum_{i=\trunc{n\vartheta}+1}^{\trunc{nu}-h} \xi_{ni}^{(j)}( \carr ) \right\|_{L_2}^2 = C_2\left( \trunc{nu} - \trunc{n\vartheta} \right), u > \vartheta,
	\end{equation}
	for constants $ C_1, C_2 < \infty $ not depending on $ h $, $ j = 1,2 $. Hence
	\begin{align}
	\label{ConsistencyOrder2} 
	\left\| \sum_{i=1}^{\trunc{nu}-h} \xi_{ni}^{(j)} \right\|_{L_2} 
%	&
%	 \le  \eins( u \le \vartheta ) \left\| \sum_{i=1}^{\trunc{n\vartheta}} \xi_{ni}^{(j)}( \barr ) \right\|_{L_2}
%	+ \eins( u > \vartheta ) \left\{ \left\| \sum_{i=1}^{\trunc{n\vartheta}} \xi_{ni}^{(j)}( \barr ) \right\|_{L_2} + \left\| \sum_{i=\trunc{n\vartheta}+1}^{\trunc{nu}-h} \xi_{ni}^{(j)}( \carr ) \right\|_{L_2} \right\} \\
	& \le C_3\left( \max( \sqrt{ \trunc{nu}}, \sqrt{\trunc{n\vartheta}} + \sqrt{\trunc{nu} - \trunc{n\vartheta} - h}) \right),
	\end{align}
	for $ j = 1, 2 $, and in turn
	\begin{equation}
	\label{UnifConvGammaTilde}
	\sup_{u \in (\varepsilon,1]} \sup_{d \in \N} \max_{|h| \le m_n} \| \wt{\Gamma}_n(u;h,d) - \EE (\wt{\Gamma}_n(u;h,d)) \|_{L_1} \le C_4 n^{-1/2}, 
	\end{equation}
	for  constants $ C_3, C_4 < \infty $. Now observe that
	$
	\wh{\Gamma}_n( u; h, d ) = \frac{1}{\trunc{nu}} \sum_{i=1}^{\trunc{nu}-h} ( \xi_{ni}^{(1)} - \overline{\xi}_n^{(1)})( \xi_{n,i+h}^{(2)} - \overline{\xi}_n^{(2)}) 
	$
	where $ \overline{\xi}_{n}^{(j)}(u) = \trunc{nu}^{-1} \sum_{i=1}^{\trunc{nu}} \xi_{ni}^{(j)} $, $ j = 1, 2 $. It holds
	\[
	\trunc{nu} \left( \wh{\Gamma}_n( u; h,d ) - \wt{\Gamma}_n( u; h,d ) \right)
	=
	- \overline{\xi}_n^{(1)}(u) \sum_{j=1}^{\trunc{nu}-h} \xi_{n,j+h}^{(2)} - \overline{\xi}_n^{(2)}(u) \sum_{j=1}^{\trunc{nu}-h}  \xi_{nj}^{(1)} -  \overline{\xi}_n^{(1)}(u) \sum_{j=1}^{\trunc{nu}} \xi_{nj}^{(2)}.
	\]
	Again decomposing the sums as \[ \sum_{j=1}^{\trunc{nu}-h} = \eins(u\le \vartheta) \sum_{j=1}^{\trunc{nu}-h} + \eins(u>\vartheta) \left\{  \sum_{j=1}^{\trunc{n\vartheta}-h} + \sum_{j=\trunc{n\vartheta}-h+1}^{\trunc{n\vartheta}} + \sum_{j = \trunc{n\vartheta}+1}^{\trunc{nu}-h} \right\} \]	and using (\ref{ConsistencyOrder1}), we obtain
	$
	\EE\left(  n | \wh{\Gamma}_n( u; h, d ) - \wt{\Gamma}_n( u; h, d ) | \right) = O(1),
	$
	uniformly over $ |h| \le m_n $, $ d \in \N $ and $ u \in [\varepsilon,1] $. For example, for $ 0 \le h \le m_n $
	\begin{align*}
	\EE \left|  \overline{\xi}_n^{(2)}(u) \sum_{j=1}^{\trunc{nu}-h}  \xi_{nj}^{(1)} \right| 
	& \le \frac{1}{\trunc{nu}} \EE \left|  \sum_{i=1}^{\trunc{nu}} \xi_{ni}^{(2)} \sum_{j=1}^{\trunc{nu}-h} \xi_{nj}^{(1)} \right| 
\le  C_1C_2\left(  \frac{\sqrt{\trunc{nu}} \sqrt{\trunc{nu}-h} }{ \trunc{nu} }   \right) 
	 = O(1),
	\end{align*}
	We may conclude that 
	$
	\sup_{d \in \N} \sup_{u \in [\varepsilon,1]}  m_n \max_{|h| \le m_n} \EE | \wh{\Gamma}_n( u; h,d ) - \wt{\Gamma}_n( u; h, d ) | = O(m_n/n) = o(1),
	$
	as $n \to \infty $, and by boundedness of the weights it follows that
	\[
	\sup_{d \in \N} \EE \left|  \sum_{|h| \le m_n} w_{mh} \wh{\Gamma}_n(h;d) - \sum_{|h| \le m_n} w_{mh} \wt{\Gamma}_n(h;d)  \right| = o(1).
	\]
	Now, having  in mind (\ref{RepresAlpha2}) and (\ref{RepresAlpha3}), decompose
	\begin{align*}
	\sum_{|h| \le m_n} w_{mh} \wt{\Gamma}_n(u;h,d) - \alpha^2(u, \barr, \carr )
	& = \sum_{|h| \le m_n} w_{mh} \left[ \wt{\Gamma}_n(u;h,d) - \Gamma(u;h,d) \right] - \sum_{|h| > m_n} w_{mh}\Gamma(u;h,d),
	\end{align*} 
	 and combine (\ref{EstimationBn}), (\ref{ConsistencySeriesConverges}) and (\ref{UnifConvGammaTilde}), see the appendix for details.
\end{proof}

\subsection{Consistency of the change-point estimators}

\begin{proof}[Proof of Theorem~\ref{UnifConvergence}]
	Observe that, by the definitions of $ U_{nk}, D_{nk} $ and $ \wt{D}_{nk} $,
	\begin{align*}
	\wh{\calU}_n( k ) - \calU_n(k) %& =  
%	\frac{1}{g(k/n) n} \left[ U_{nk} - \frac{k}{n} U_{nn} - \EE\left( U_{nk} - \frac{k}{n} U_{nn} \right) \right] \\
%	& = \frac{1}{g(k/n) n} \left( D_{nk} - \frac{k}{n} D_{nn} \right)  \\
	& = \frac{1}{g(k/n) n} \left( \wt{D}_{nk} - \frac{k}{n} \wt{D}_{nn} \right) + \frac{R_{nk}}{g(k/n)},
	\end{align*} 
	with remainder $ R_{nk} = \frac{1}{n} \left( D_{nk} - \frac{k}{n} D_{nn} - [ \wt{D}_{nk} - \frac{k}{n} \wt{D}_{nn} ]  \right) $. By (\ref{Dnk_MapproxWeighted})  %(\ref{WeightedApproxBound}) 
	we have for $ k \le n/2 $ 
	\[
	  \max_{1 \le k \le n/2} \EE \left( \frac{R_{nk}}{g(k/n)}  \right)^2 \le 2 \max_{1 \le k \le n/2} \EE \left( \left( \frac{n}{k} \right)^\beta R_{nk} \right)^2 \stackrel{n}{\ll} n^{-1-\theta} 
	\]
	and the same bound holds for $ n/2 < k < n $. Therefore for any $ \delta > 0 $
	\[
	 \PP\left( \max_{1 \le k < n} \frac{|R_{nk}|}{g(k/n)} > \delta  \right)
	 \le \PP \left( \sum_{k=1}^n \left( \frac{R_{nk}}{g(k/n)} \right)^2> \delta^2  \right) 
	 \ll n^{-\theta}.
	\]
	Hence, it suffices to show that for all $ \delta > 0 $
%	\begin{align}
%	\label{Consist1}
$	\PP\left( | \wt{D}_{nn} | > \delta n \right) = o(1)$
and
%	\label{Consist2}
$	\PP\left( \max_{1 \le k < n} | \wt{D}_{nk} | > \delta n \right) = o(1) $, 
where the first assertion follows from the latter maximal inequality.
%	\end{align}
%	as $ n \to \infty $, where (\ref{Consist1}) follows from the maximal inequality (\ref{Consist2}). 
Of couse $ \EE( \wt{D}_{nn}^2 ) = O( n ) $, since $ \wt{D}_{nn} $ is the sum of $n$ martingale differences. Now an application of Doob's maximal inequality entails
	$
	\PP\left( \max_{1 \le k < n} | \wt{D}_{nk} |^2 > \delta^2 n^2 \right) 
	= \frac{ \EE( \wt{D}_{nn}^2 ) }{ \delta^2 n^2 } = O\left( \frac{1}{n} \right),
	$
	which establishes \[ \PP\left( \max_{1 \le k < n} | \wt{D}_{nk} | > \delta n \right) = o(1)  \] %(\ref{Consist2}) 
	and in turn (\ref{UnifConvM1}). 
	Next consider 
	\begin{align*}
	\sup_{t \in [0,1]} | \wh{u}_n(t) - u(t) |
%	& \le \sup_{t \in [0,1]} | \wh{u}_n(t) - u_n(t) | + \sup_{t \in [0,1]} | u_n(t) - u(t) | \\
	& \le \max_{1 \le k < n } | \wh{\calU}_n(k) - \calU_n(k) | + \sup_{t \in [0,1]} | u_n(t) - u(t) | 
	 = \sup_{t \in [0,1]} | u_n(t) - u(t) | + o_\PP(1),
	\end{align*}
	as $ n \to \infty $, by (\ref{UnifConvM1}). Clearly, $ u_n(t) \to u(t) $ for each fixed $t$, and by monotonicity on $ [0, \vartheta] $ and $ [\vartheta, 1] $ this implies uniform convergence, since $ g $ is continuous, which completes the proof.
\end{proof}

\begin{proof}[Proof of Theorem~\ref{ConsistencyCPEstimator}] 
	Since $ \vartheta \in (0,1) $ is an isolated maximum of $ u $ and $ \wh{u}_n $ converges uniformly to $ u $, the consistency follows from well known results, see, e.g., \cite{vanderVaart1998}, by virtue of Theorem~\ref{UnifConvergence} and (\ref{RelationMaximizers}). 
\end{proof}

\appendix

\section{Additional Results and Proofs}
\label{Sec:Proofs}

This section provides technical details of several proofs and additional auxiliary results. Especially, the proofs of the asymptotic results are based on martingale approximations which require several additional results and technical preparations. These results extend and complement the results obtained in \cite{StelandSachs2017}.

\subsection{Proofs for Subsection 7.1 (Preliminaries)}

Consider the  multivariate linear time series of dimension $ q = q_n \to \infty $,
\[
\label{MultivLinProc2}
\vecZ_{ni} = \sum_{j=0}^\infty \matB_{nj} \matP \matV^{1/2} \bfeps_{n,i-j}, \qquad i \ge 1, n \in \N, 
\]
with $ \bfeps_{ni} = ( \epsilon_{n,i-r_1}, \ldots, \epsilon_{n,i-r_{d_n}} )^\top,  i \ge 1, n \in \N, $  $ 0 = r_1 < \cdots < r_{d_n} $, cf. (5), which can be written as
\[
\vecZ_{ni} = (Z_{ni}^{(1)}, \ldots, Z_{ni}^{(d_n)} )^\top, \quad Z_{ni}^{(\nu)} =  \sum_{k=0}^\infty \left( \sum_{\ell=1}^q \eins( k \ge r_\ell ) b_{n,k-r_\ell}^{(\nu,\ell)}  \right) \epsilon_{n,i-k}, \quad \nu = 1, \ldots, d_n.
\]  
Let
$
\matP \matV^{1/2} = \sum_{i=1}^q \pi_{ni} \vecl_{ni} \vecr_{ni}^\top
$
be the SVD of $ \matP \matV^{1/2}  $ with singular values $ \pi_{ni} $, left singular vectors $ \vecl_{ni} \in \R^{q_n} $ and right singular vectors $ \vecr_{ni} = ( r_{ni1}, \ldots, r_{nid_n} )^\top \in \R^{d_n} $, which satisfy $ \| \vecl_{ni} \|_{\ell_2} = \| \vecr_{ni} \|_{\ell_2} = 1 $, $ i = 1, \dots, q_n $.  Then
$
\matB_{nj} \matP \matV^{1/2} = \sum_{i=1}^{q_n} \pi_{ni} \matB_{nj} \vecl_{ni} \vecr_{ni}^\top,
$
and the element at position $ (\nu, \ell) $ of the latter matrix is given by 
$ \sum_{i=1}^q \pi_{ni} \vecb_{nj,\nu}^\top \vecl_{ni} r_{ni \ell} $. Plugging the latter formula into equation (4) leads us to
\[
Z_{ni}^{(\nu)} = \sum_{k=0}^\infty \left( \sum_{\ell=1}^{q_n} \eins( k \ge r_\ell )
\sum_{i'=1}^{q_n} \pi_{ni'} \vecb_{n,k-r_\ell,\nu}^\top \vecl_{ni'} r_{ni'\ell}  \right) \epsilon_{n,i-k},
\]
i.e. the coefficients $ c_{nk}^{(Z,\nu)} $ of the series (4) take now the form $ c_{nk}^{(Z,\nu)}  =  \sum_{\ell=1}^{q_n} \eins( k \ge r_\ell ) \sum_{i=1}^{q_n} \pi_{ni} \vecb_{n,k-r_\ell,\nu}^\top \vecl_{ni} r_{ni\ell}  $. 

\begin{lemma} 
	\label{LemmaMLP}
	Suppose that
	\begin{itemize}
		\item[(i)] $ \| \vecl_{nk} \|_{\ell_1}, \| \vecr_{nk} \|_{\ell_1} \stackrel{n,k}{\ll} 1 $ and $ \sum_{i=1}^\infty | \pi_{ni} | = O(1) $,
		\item[(ii)] $ \sup_{n \ge 1} \sup_{1 \le \nu, \mu} | b_{nj}^{(\nu,\mu)} | \stackrel{j,\ell}{\ll} (j+2r_\ell)^{-3/2-\theta} $,
		\item[(iii)] $ \sum_{\ell=1}^\infty r_{\ell}^{-3/4-\theta/2} = O(1). $
	\end{itemize}
	Then $ \sup_{n \ge 1} \max_{1 \le \nu \le d_n} c_{nk}^{(Z,\nu)}  \stackrel{k}{\ll}  k^{-3/4-\theta/2}$, i.e. (D) holds.
\end{lemma}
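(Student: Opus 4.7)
The plan is to bound $c_{nk}^{(Z,\nu)}$ by a chain of three estimates that successively exhaust conditions (i), (ii) and (iii). Starting from the explicit formula
\[
  c_{nk}^{(Z,\nu)} = \sum_{\ell=1}^{q_n} \eins(k\ge r_\ell) \sum_{i=1}^{q_n} \pi_{ni}\,\vecb_{n,k-r_\ell,\nu}^\top \vecl_{ni}\, r_{ni\ell},
\]
I would first collapse the inner product with $\vecl_{ni}$ by Hölder's inequality and the $\ell_1$-bound from (i):
\[
  \bigl|\vecb_{n,k-r_\ell,\nu}^\top \vecl_{ni}\bigr|
  \le \max_{1\le \mu \le q_n}\bigl|b_{n,k-r_\ell}^{(\nu,\mu)}\bigr|\,\|\vecl_{ni}\|_{\ell_1}
  \stackrel{n,i}{\ll} \max_{1\le \mu \le q_n}\bigl|b_{n,k-r_\ell}^{(\nu,\mu)}\bigr|.
\]
Applying (ii) with index $\ell$ to this maximum yields, on the event $r_\ell\le k$, the bound $((k-r_\ell)+2r_\ell)^{-3/2-\theta}=(k+r_\ell)^{-3/2-\theta}$.

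Next I would discharge the $i$-sum. Since $\|\vecr_{ni}\|_{\ell_2}=1$ forces $|r_{ni\ell}|\le 1$, the trace-class hypothesis $\sum_i|\pi_{ni}|=O(1)$ in (i) gives $\sum_{i=1}^{q_n}|\pi_{ni}|\,|r_{ni\ell}|=O(1)$ uniformly in $\ell$ and $n$. Combining the first two steps leaves the purely scalar estimate
\[
  |c_{nk}^{(Z,\nu)}|\ \ll\ \sum_{\ell:\,r_\ell\le k} (k+r_\ell)^{-3/2-\theta}.
\]

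The final step splits the polynomial decay by writing $(k+r_\ell)^{-3/2-\theta}\le k^{-3/4-\theta/2}(k+r_\ell)^{-3/4-\theta/2}$. The $\ell=1$ term, where $r_1=0$, contributes at most $k^{-3/2-\theta}\le k^{-3/4-\theta/2}$; for $\ell\ge 2$ I would bound $(k+r_\ell)^{-3/4-\theta/2}$ by $r_\ell^{-3/4-\theta/2}$, which is summable by (iii). This gives $|c_{nk}^{(Z,\nu)}|\ll k^{-3/4-\theta/2}$ uniformly in $n$ and $\nu$, which is exactly (D). The main obstacle is conceptual rather than computational: the entire $q_n$-dimensional inner structure must be eliminated with estimates that are uniform in $q_n$, and this forces the particular combination of the $\ell_1$-norm bound on $\vecl_{ni}$ (to collapse one index) with the $\ell_2$-normalization of $\vecr_{ni}$ together with the trace-class bound on $\{\pi_{ni}\}$ (to collapse the other). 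The clean factorization $3/2+\theta=2\cdot(3/4+\theta/2)$ at the end is what pairs (iii) precisely with the desired target rate.
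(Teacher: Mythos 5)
Your proof is correct and follows essentially the same route as the paper's: collapse the inner product via the $\ell_1$-bound on $\vecl_{ni}$ and the bound $|r_{ni\ell}|\le 1$, sum out $i$ using the trace-class condition on $\{\pi_{ni}\}$, and then factor $(k+r_\ell)^{-3/2-\theta}\le r_\ell^{-3/4-\theta/2}\,k^{-3/4-\theta/2}$ so that (iii) closes the $\ell$-sum. Your separate handling of the $\ell=1$ term (where $r_1=0$ makes $r_\ell^{-3/4-\theta/2}$ undefined) is a small precaution the paper's version glosses over, but otherwise the two arguments coincide.
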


\begin{proof}[{\bf Proof of  Lemma~\ref{LemmaMLP}}] 
	We can assume that the constants in (i) are equal to $1$. 
	By (i) $ | r_{n k\ell} | \le 1 $ for all $n, k, \ell $. Using the inequality $ | \vecx^\top \vecy | \le \| \vecx \|_{\ell_1} \sup_j | y_j |  $ we obtain for all $ \nu \ge 1 $
	\[
	| \vecb_{k-r_\ell,\nu}^{(\nu)}{}^\top \vecl_{ni} r_{ni\ell} | \le \| \vecl_{ni} \|_{\ell_1} | r_{ni\ell} | \sup_{1 \le \nu, \mu} | b_{k-r_\ell}^{(\nu, \mu)} | \stackrel{k,\ell}{\ll} (k+r_\ell)^{-3/2-\theta/2}. 
	\]
	Combining
	\begin{equation}
	\label{estimate}
	(k+r_\ell)^{-3/2-\theta} = r_\ell^{-3/4-\theta/2} k^{-3/4-\theta/2} \left( \frac{k}{k+r_\ell}  \right)^{3/4+\theta/2} \left(  \frac{r_\ell}{k+r_\ell}  \right)^{3/4+\theta/2}
	\le r_\ell^{-3/4-\theta/2} k^{-3/4-\theta/2} 
	\end{equation}
	with $ \sum_{i=1}^\infty | \pi_{ni} | = O(1) $ now yields
	\[
	\sum_{i=1}^{q_n} |\pi_{ni} \vecb_{n,k-r_\ell,\nu}^\top \vecl_{ni} r_{ni\ell} |
	\stackrel{k,\ell}{\ll} r_\ell^{-3/4-\theta/2} (k+r_\ell)^{-3/4-\theta/2} \sum_{i=1}^\infty | \pi_{ni} |.
	\]
	Using (iii) we may conclude that the coefficients $ c_{nk}^{(Z,\nu)} $ satisfy
	\[
	\sup_{n \ge 1} \max_{\nu \ge 1}  | c_{nk}^{(Z,\nu)}  |
	\le \sum_{\ell=1}^{\infty} \left| \sum_{i=1}^{q_n}  \pi_{ni} \vecb_{n,k-r_\ell,\nu}^\top \vecl_{ni} r_{ni\ell} \right| \stackrel{k}{\ll} k^{-3/4-\theta/2} \sum_{\ell=1}^\infty r_\ell^{-3/4-\theta/2}, 
	\]	
	which completes the proof. 
\end{proof}

Whereas the conditions of  Lemma~\ref{LemmaMLP} rule out eigenvectors such as $ (1/\sqrt{d_n}, \ldots, 1/\sqrt{d_n})^\top $, the following set of conditions relaxes the assumptions on the eigenstructure by strengthening the requirements on the coefficient matrices. 

\begin{lemma}
	\label{LemmaMLP2}
	Suppose that
	\begin{itemize}
		\item[(i)] $ \sum_{k=1}^\infty | \pi_{nk} | = O(1) $ and
		\item[(ii)] $\sup_{n \ge 1} \sup_{\nu \ge 1} \sum_{j=1}^{d_n} | b_{n,k-r\ell}^{(\nu,j)} | \stackrel{k,\ell}{\ll} (k+2r_\ell)^{-3-\theta}. $
	\end{itemize}	
	Then $ \sup_{n \ge 1} \max_{1 \le \nu \le d_n} | c_{nk}^{(Z,\nu)} | \stackrel{k}{\ll}  k^{-3/4-\theta/2}$, i.e. (D) holds.
\end{lemma}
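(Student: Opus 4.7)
The plan is to bound $|c_{nk}^{(Z,\nu)}|$ directly from its explicit representation
$$
c_{nk}^{(Z,\nu)} = \sum_{\ell : r_\ell \le k} \sum_{i=1}^{q_n} \pi_{ni}\, \bigl(\vecb_{n,k-r_\ell,\nu}^\top \vecl_{ni}\bigr) r_{ni\ell},
$$
using two simple observations that replace the $\ell_1$-bounds on $\vecl_{ni}, \vecr_{ni}$ that were used in Lemma~\ref{LemmaMLP}. First, since $\|\vecl_{ni}\|_{\ell_2} = \|\vecr_{ni}\|_{\ell_2} = 1$ we have the trivial bounds $\|\vecl_{ni}\|_{\ell_\infty} \le 1$ and $|r_{ni\ell}| \le 1$. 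Second, Hölder's inequality with the $(\ell_1,\ell_\infty)$ pairing then yields
$$
\bigl| \vecb_{n,k-r_\ell,\nu}^\top \vecl_{ni}\, r_{ni\ell} \bigr|
\;\le\; \|\vecb_{n,k-r_\ell,\nu}\|_{\ell_1} \cdot \|\vecl_{ni}\|_{\ell_\infty} \cdot |r_{ni\ell}|
\;\le\; \|\vecb_{n,k-r_\ell,\nu}\|_{\ell_1}.
$$
Summing in $i$ and using assumption (i), $\sum_i |\pi_{ni}| = O(1)$ uniformly in $n$, together with the decay assumption (ii), which gives $\|\vecb_{n,k-r_\ell,\nu}\|_{\ell_1} \stackrel{k,\ell}{\ll} (k+2r_\ell)^{-3-\theta}$ uniformly in $n,\nu$, one obtains
$$
\sup_{n\ge 1}\max_{1\le \nu \le d_n}\bigl|c_{nk}^{(Z,\nu)}\bigr|
\;\stackrel{k}{\ll}\; \sum_{\ell=1}^{\infty} (k+2r_\ell)^{-3-\theta}.
$$

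It remains to verify that the latter series is $O(k^{-3/4-\theta/2})$, and this is where the residual decay budget must be carefully allocated. The key factorization, in the spirit of (\ref{estimate}), is
$$
(k+2r_\ell)^{-3-\theta} \;=\; (k+2r_\ell)^{-3/4-\theta/2}\,(k+2r_\ell)^{-9/4-\theta/2}
\;\le\; k^{-3/4-\theta/2}\,(k+2r_\ell)^{-9/4-\theta/2}.
$$
The term $\ell=1$ (where $r_1=0$) contributes $k^{-3-\theta}=o(k^{-3/4-\theta/2})$, while for $\ell\ge 2$ I bound $(k+2r_\ell)^{-9/4-\theta/2} \le (2r_\ell)^{-9/4-\theta/2}$, leaving the tail
$$
k^{-3/4-\theta/2}\sum_{\ell=2}^{\infty} (2r_\ell)^{-9/4-\theta/2}.
$$

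The final step is to check that this tail sum is finite, uniformly in $n$. Since the $r_\ell$ are strictly increasing non-negative integers with $r_1=0$, we have $r_\ell \ge \ell - 1 \ge \ell/2$ for $\ell \ge 2$, hence $\sum_{\ell\ge 2} r_\ell^{-9/4-\theta/2} \le 2^{9/4+\theta/2}\sum_{m\ge 1} m^{-9/4-\theta/2}$, which converges because $9/4+\theta/2 > 1$. Combining the estimates yields $|c_{nk}^{(Z,\nu)}| \stackrel{k}{\ll} k^{-3/4-\theta/2}$ uniformly in $n$ and $\nu$, which is exactly condition (D). The main obstacle I anticipate is not a conceptual one but a bookkeeping hazard: the exponent $-3-\theta$ in (ii) is tight enough that the splitting of the exponent between ``$k$-part'' and ``$r_\ell$-part'' must be chosen so both the decay in $k$ matches the required rate $-3/4-\theta/2$ \emph{and} the residual $r_\ell$-exponent strictly exceeds $1$; the split $3/4+\theta/2$ versus $9/4+\theta/2$ is the unique choice that achieves this using only $r_\ell\ge \ell-1$, which is why no separate summability condition on the $r_\ell$ (as in Lemma~\ref{LemmaMLP}(iii)) needs to be assumed.
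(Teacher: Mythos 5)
Your proof is correct, and it reaches the conclusion by a genuinely different route than the paper. The paper bounds $\vecb_{n,k-r_\ell,\nu}^\top\vecl_{ni}$ by Cauchy--Schwarz, i.e.\ by $\|\vecb_{n,k-r_\ell,\nu}\|_{\ell_2}\,\|\vecl_{ni}\|_{\ell_2}$, which effectively treats hypothesis (ii) as a bound on the row $\ell_2$-norms (this is how the condition is phrased in the main text, with squares inside the sum) and yields only the rate $(k+r_\ell)^{-3/2-\theta/2}$ per term; it then refers back to the summation-over-$\ell$ argument of Lemma~\ref{LemmaMLP}, which tacitly leans on a summability property of the $r_\ell$ that is not listed among the hypotheses of the present lemma. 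You instead pair the row $\ell_1$-norm from (ii), read literally, with $\|\vecl_{ni}\|_{\ell_\infty}\le\|\vecl_{ni}\|_{\ell_2}=1$, which preserves the full exponent $-3-\theta$; this surplus decay is exactly what lets you split off $k^{-3/4-\theta/2}$ and still retain an exponent $9/4+\theta/2>1$ for the $r_\ell$, so that the elementary bound $r_\ell\ge\ell-1$ alone closes the sum. The payoff is a self-contained argument that uses only the stated hypotheses (i)--(ii) and needs no analogue of condition (iii) of Lemma~\ref{LemmaMLP}; the small cost is that your argument is tied to the $\ell_1$ reading of (ii), whereas the Cauchy--Schwarz pairing would also cover the weaker squared-sum version of the condition stated in the main text. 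One minor overstatement: the split $3/4+\theta/2$ versus $9/4+\theta/2$ is not the unique workable allocation --- any split leaving the $r_\ell$-exponent strictly above $1$ and the $k$-exponent at least $3/4+\theta/2$ works --- but this does not affect the validity of the proof.
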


\begin{proof}[{\bf Proof of  Lemma~\ref{LemmaMLP2}}] 
	Recall that $ \| \vecl_{ni} \|_{\ell_2} = \| \vecl_{ni} \|_{\ell_2} = 1 $ for all $n,i$. 
	The proof is similar as the proof of  Lemma~\ref{LemmaMLP} noting that 
	$ \| \vecr_i \|_{\ell_2} $ implies $  | r_{i\ell} | \le 1 $ and using the estimate
	$ | \vecb_{k-r_\ell,\nu}^{(\nu)}{}^\top \vecl_i r_{i\ell} | \le \| \vecb_{k-r_\ell} \|_{\ell_2} \| \vecl_i \|_{\ell_2} | r_{i\ell} | \le \sqrt{\sum_{\mu=1}^{d_n} ( b_{k-r_\ell}^{(\nu,\mu)} )^2 } 
	\stackrel{j,\ell}{\ll} (k+r_\ell)^{-3/2-\theta/2}$.
\end{proof}

Assumption (ii) is a weak localizing condition on the coefficient matrices of the multivariate linear process, as it limits the influence of the $\mu$th innovation on the $ \nu$ th coordinate process at all lags $ j$. Observe that a sufficient condition for (ii) is to assume that
\[
\sup_{n \ge 1} \sup_{1 \le \nu, \mu} |b_{nj}^{(\nu,\mu)} | \stackrel{j,\ell}{\ll} (j+2r_\ell)^{-3/2-\theta/2} / \mu^{1/2+\delta}
\]
for some $ \delta > 0 $. 

Consider a stable VARMA model
\begin{equation}
\label{Varmamodel}
\vecY_{ni} = \matA_{n1} \vecY_{n,i-1} + \ldots + \matA_{np} \vecY_{n,i-p} + \matM_{n1} \bfeps_{n,i-1} + \ldots + \matM_{nr} \bfeps_{n,i-r} + \bfeps_{ni},
\end{equation}
as introduced in the main document with $(d_n \times d_n) $  coefficient matrices $ \matA_{n1}, \ldots, \matA_{np} $ and $ \matM_{n1}, \ldots, \matM_{nr} $ satisfying (element-wise) the decay condition (7) with $ \varpi = 1$ for some $ \delta > 0 $. The coefficient matrices, $  \boldsymbol{\Phi}_{nj} $, of the $MA(\infty) $ representation, 
\[ \vecY_{ni} =  \sum_{j=0}^\infty \boldsymbol{\Phi}_{nj} \bfeps_{n,i-j}, \]
can be calculated using the recursion
\[
\boldsymbol{\Phi}_{n0} = \matid_{d_n},  \boldsymbol{\Phi}_{nj} = \matM_{nj} + \sum_{k=1}^j \matA_{nk} \boldsymbol{\Phi}_{n,j-k},  j \ge 1,
\]
where $ \matM_{nj} = \vecnull $ for $ j > q_n $. Denote $ \matA_{nk} = ( a_{nk}^{(\nu,\mu)} )_{\nu, \mu} $ and $ \boldsymbol{\Phi}_j = ( \Phi_{nj}^{(\nu,\mu)} )_{\nu, \mu} $. 

\begin{lemma} 
	\label{VARMAcondition}
	Suppose that the cofficient matrices of the VARMA model satisfy (7) with $ \varpi = 1$, i.e. for some $ \delta > 0 $ 
	\[
	| a_{nk}^{(\nu,\mu)} |, | m_{nk}^{(\nu,\mu)} | \ll (k+2r_\ell)^{-5/2-\theta} (\nu \mu )^{-1/2-\delta}.
	\]
	Then $ | \Phi_{nj}^{(\nu,\mu)} |  \stackrel{j,\ell}{\ll} (j+2r_\ell)^{-3/2-\theta} (\nu \mu)^{-1/2-\delta} \stackrel{\nu,\mu}{\ll} (j+2r_\ell)^{-3/2-\theta} $, such that (7) is satisfied with $ \varpi = 0 $, and therefore the decay condition (D) holds for the coefficients,
	$ c_{nk}^{(Z,\nu)} = \sum_{\ell=1}^{q_n} \eins( k \ge r_\ell ) \Phi_{n,k-r_\ell}^{(\nu,\ell)} $, in the representation (4) of the multivariate linear process associated to the VARMA model (\ref{Varmamodel}).
\end{lemma}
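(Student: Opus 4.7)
The plan is to establish by strong induction on $j$ the pointwise bound
$|\Phi_{nj}^{(\nu,\mu)}|\ll(j+2r_\ell)^{-3/2-\theta}(\nu\mu)^{-1/2-\delta}$, then absorb the bounded factor $(\nu\mu)^{-1/2-\delta}$ into the implied constant to read off (\ref{StrongerDecayConstruction}) with $\varpi=0$ for the matrices $\boldsymbol{\Phi}_{nj}$, and finally invoke Lemma~\ref{LemmaMLP2} applied to the multivariate linear representation to conclude (D) for the coefficients $c_{nk}^{(Z,\nu)}=\sum_{\ell=1}^{q_n}\eins(k\ge r_\ell)\Phi_{n,k-r_\ell}^{(\nu,\ell)}$.

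For the inductive step I expand the $(\nu,\mu)$ entry of the recursion as $\Phi_{nj}^{(\nu,\mu)}=m_{nj}^{(\nu,\mu)}+\sum_{k=1}^{j}\sum_{\ell'=1}^{d_n}a_{nk}^{(\nu,\ell')}\Phi_{n,j-k}^{(\ell',\mu)}$, insert the hypothesised bound together with the assumed decay of $a_{nk}^{(\nu,\ell')}$, and factor out $(\nu\mu)^{-1/2-\delta}$. The intermediate sum over $\ell'$ then contributes $\sum_{\ell'\ge 1}(\ell')^{-1-2\delta}<\infty$, which is the crucial device that keeps the $(\nu\mu)^{-1/2-\delta}$ weighting closed under matrix multiplication. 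The remaining $k$-sum is the discrete convolution $\sum_{k=1}^{j}(k+2r_\ell)^{-5/2-\theta}(j-k+2r_\ell)^{-3/2-\theta}$, which I would bound by $\ll(j+2r_\ell)^{-3/2-\theta}$ using the usual split at $k=\lfloor j/2\rfloor$: on $k\le j/2$ the second factor is $O((j+2r_\ell)^{-3/2-\theta})$ and the first factor is absolutely summable, while on $k>j/2$ the roles reverse with one power of polynomial margin to spare.

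The main obstacle is closing the induction: the above estimates produce a bound of the form $|\Phi_{nj}^{(\nu,\mu)}|\le(K_1+K_2 C)(j+2r_\ell)^{-3/2-\theta}(\nu\mu)^{-1/2-\delta}$ with absolute constants $K_1,K_2$, so closing requires $K_2<1$, which is not automatic from the polynomial decay alone. I would enforce this through stability of $\matA(z)$: stability yields absolutely summable, geometrically decaying inverse-operator coefficients $\matD_{nj}$, so that the Volterra-type map $T(\boldsymbol{\Phi})_j=\sum_{k\ge 1}\matA_{nk}\boldsymbol{\Phi}_{j-k}$ acts as a strict contraction on the weighted sequence space with weights $(j+2r_\ell)^{3/2+\theta}(\nu\mu)^{1/2+\delta}$. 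An alternative and cleaner route would be to use the explicit factorisation $\boldsymbol{\Phi}_{nj}=\sum_{k=0}^{j}\matD_{nk}\matM_{n,j-k}$, first running the analogous induction on the simpler recursion $\matD_{n0}=\matid_{d_n}$, $\matD_{nj}=\sum_{k=1}^{j}\matA_{nk}\matD_{n,j-k}$ (where geometric decay of $\matD_{nj}$ in operator norm from stability comfortably absorbs the induction constant), and then convolving componentwise with the finite-support $\matM_{n,\cdot}$.

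Once the pointwise decay of $\Phi_{nj}^{(\nu,\mu)}$ is secured, condition (ii) of Lemma~\ref{LemmaMLP2} applied to $\boldsymbol{\Phi}_{nj}$ follows from the estimate $\sup_\nu\sum_{\mu=1}^{d_n}|\Phi_{n,k-r_\ell}^{(\nu,\mu)}|^2\ll(k+r_\ell)^{-3-2\theta}\sum_{\mu\ge 1}\mu^{-1-2\delta}\ll(k+2r_\ell)^{-3-\theta}$, where the $(\nu\mu)^{-1/2-\delta}$ decay provides the required $\ell_2$-summability in $\mu$; condition (i), namely $\sum_k|\pi_{nk}|=O(1)$, is inherent to the VARMA construction with innovations (\ref{DefInnovationsMulti}) (where $\matP\matV^{1/2}$ reduces to the identity-type scaling). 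Lemma~\ref{LemmaMLP2} then delivers $\sup_{n\ge 1}\max_{1\le\nu\le d_n}|c_{nk}^{(Z,\nu)}|\ll k^{-3/4-\theta/2}$, which is the decay condition (D).
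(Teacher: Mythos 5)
Your proposal follows the same core strategy as the paper's proof: induction on $j$ through the recursion $\boldsymbol{\Phi}_{nj}=\matM_{nj}+\sum_{k=1}^{j}\matA_{nk}\boldsymbol{\Phi}_{n,j-k}$, with the weight $(\nu\mu)^{-1/2-\delta}$ kept closed under the matrix product by summing $\sum_{s\ge 1}s^{-1-2\delta}<\infty$ over the intermediate index, followed by a convolution estimate in $k$. Two differences are worth recording. For the convolution, the paper does not split at $j/2$; it uses the elementary inequality $ab\ge a+b-1$ to get $(k+2r_\ell)(j-k+2r_\ell)\ge j+2r_\ell$, bounds each summand by $(j+2r_\ell)^{-5/2-\theta}$, and then pays a factor $j$ for the $k$-sum, closing the exponent via $j\,(j+2r_\ell)^{-5/2-\theta}\le (j+2r_\ell)^{-3/2-\theta}$. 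Your split at $\lfloor j/2\rfloor$ reaches the same bound and is cleaner (the paper's intermediate step replacing $(j-k+2r_\ell)^{-3/2-\theta}$ by $(j-k+2r_\ell)^{-5/2-\theta}$ does not literally hold as written). Second, your concern about closing the induction constant is legitimate and applies equally to the paper's own argument, which simply reuses $\ll$ at every step without tracking how the implied constant depends on $j$; your proposed remedies (a contraction argument exploiting stability, or the factorisation $\boldsymbol{\Phi}_{nj}=\sum_k\matD_{nk}\matM_{n,j-k}$ with geometric decay of $\matD_{nj}$) are the right kind of fix, but in your write-up they remain sketches rather than a completed step.

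The one genuine misstep is in your final reduction to (D). The paper does not route through Lemma~\ref{LemmaMLP2}: it bounds $|c_{nk}^{(Z,\nu)}|\le\sum_{\ell}|\Phi_{n,k-r_\ell}^{(\nu,\ell)}|\ll\sum_\ell (k+r_\ell)^{-3/2-\theta}\le k^{-3/4-\theta/2}\sum_\ell r_\ell^{-3/4-\theta/2}$ directly, using the summability of $r_\ell^{-3/4-\theta/2}$ assumed in (\ref{StrongerDecayConstruction}). Your appeal to Lemma~\ref{LemmaMLP2} founders on its condition (i): $\sum_k|\pi_{nk}|=O(1)$ is a trace-class requirement that is \emph{not} ``inherent to the construction'' — when $\matP\matV^{1/2}=\matid_{d_n}$ the singular values are $d_n$ ones and the sum equals $d_n\to\infty$. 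The conclusion survives because the direct summation above needs no such condition, but as written your last paragraph rests on a hypothesis that fails in exactly the case you invoke it for; replace it with the paper's one-line direct estimate.
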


\begin{proof}[{\bf Proof of  Lemma~\ref{VARMAcondition}}] The proof is by induction. For $ j = 0 $ the assertion follows from $ \Phi_{n0}^{(\nu,\mu)} = \matM_{n0}^{(\nu,\mu)} + \matA_{n0}^{(\nu,\mu)} $. For $j > 1 $ it suffices to show that
	\[
	\sum_{k=1}^j \sum_{s=1}^{d_n} | a_{nk}^{(\nu,s)} \Phi_{n,j-k}^{(s,\mu)} | \stackrel{j,\ell}{\ll} (j+2r_\ell)^{-3/2-\theta} (\nu \mu)^{-1/2-\delta}.
	\]
	Since for $ 1 \le k < j $
	\begin{align*}
	| a_{nk}^{(\nu,s)} | & \stackrel{k,\ell}{\ll} (k+2r_\ell)^{-5/2-\theta} ( \nu s )^{-1/2 - \delta}, \\
	| \Phi_{n,j-k}^{(s, \mu)} | & \stackrel{j,k,\ell}{\ll} (j-k+2r_\ell)^{-3/2-\theta} (\mu s)^{-1/2-\delta}, 
	\end{align*}
	we have
	\begin{align*}
	| a_{nk}^{(\nu,s)}\Phi_{n,j-k}^{(s, \mu)} |
	& \stackrel{j,k,\ell}{\ll}  (k+2 r_\ell)^{-5/2-\theta} (j-k+2r_\ell)^{-3/2-\theta} (s^2 \nu \mu )^{-1/2-\delta} \\
	& \stackrel{j,k,\ell}{\ll}  [(k+2r_\ell)(j-k+2r_\ell)]^{-5/2-\theta} (s^2 \nu \mu )^{-1/2-\delta}.
	\end{align*}
	Using the fact that $ ab \ge a+b-1 $ for $ a,b \ge 1 $, we obtain
	$ (k+2r_\ell)(j-k+2r_\ell) \ge j + 4 r_\ell -1  \ge j + 2 r_\ell $, such that
	\[
	| a_{nk}^{(\nu,s)} \Phi_{n,j-k}^{(s, \mu)} | \stackrel{j,k,\ell}{\ll}  (j+2r_\ell)^{-5/2-\theta} (s^2 \nu \mu )^{-1/2-\delta}.
	\]
	Consequently,
	\[
	\sum_{s=1}^{d_n} | a_{nk}^{(\nu,s)}  \Phi_{n,j-k}^{(s,\mu)} | \stackrel{j,k,\ell}{\ll}  (j+2r_\ell)^{-5/2-\theta}  (\nu \mu)^{-1/2-\delta} \sum_{s=1}^\infty s^{-1-\delta/2}
	\]
	and we may conclude that
	\[
	\sum_{k=1}^j \sum_{s=1}^{d_n} | a_{nk}^{(\nu,s)} | \Phi_{n,j-k}^{(s,\mu)} | \stackrel{j,\ell}{\ll} 
	j  (j+2r_\ell)^{-5/2-\theta}  (\nu \mu)^{-1/2-\delta} 
	\ll (j+2r_\ell)^{-3/2-\theta}  (\nu \mu)^{-1/2-\delta},
	\]
	such that
	\[
	| \Phi_{nj}^{(\nu,\mu)} | \le | \matM_{nj}^{(\nu,\mu)} | +  \sum_{k=1}^j \sum_{s=1}^{d_n} | a_{nk}^{(\nu,s)} \Phi_{n,j-k}^{(s,\mu)} | \stackrel{j,\ell}{\ll} (j+2r_\ell)^{-3/2-\theta} (\nu \mu)^{-1/2-\delta} \stackrel{\nu,\mu}{\ll} (j+2r_\ell)^{-3/2-\theta}.
	\]
	Using again the estimate (\ref{estimate}), we may conclude that
	$ | c_{nk}^{(Z,\nu)} | \le \sum_{\ell=1}^{q_n} | \Phi_{n,k-r_\ell}^{(\nu,\ell)} | \stackrel{\nu}{\ll} k^{-3/4-\theta/2} $.
\end{proof}

We need the following lemma.

\begin{lemma} Under Assumption (D) it holds for weighting vectors $ \vecv_n, \vecw_n, \wt\vecv_n, \wt\vecw_n $ with finite  $ \ell_1 $-norms
	\begin{align}
	\label{FCoeff1}
	& \sum_{i=1}^\infty \sum_{\ell=0}^\infty ( \wt{f}_{\ell,i}^{(n)}( \aarr, \vecv_n, \vecw_n ) - \wt{f}_{\ell,i}^{(n)}( \aarr, \vecv_n, \vecw_n ) )^2 \stackrel{n,n'}{\ll} \| \vecv_n \|_{\ell_1}^2 \| \vecw_n \|_{\ell_1}^2 (n')^{1-\theta}
	\quad \text{for $ n',n = 1, 2, \cdots$}, \\
	\label{FCoeff2}
	& \sum_{k=1}^n \sum_{r=0}^\infty [ \wt{f}_{r+k,0}^{(n)}  ( \aarr, \vecv_n, \vecw_n ) ]^2 \stackrel{n,n'}{\ll} \| \vecv_n \|_{\ell_1}^2 \| \vecw_n \|_{\ell_1}^2 (n')^{1-\theta}
	\quad \text{for $ n',n = 1, 2, \cdots$}, \\
	\label{FCoeff3}
	& \sum_{k=1}^n \sum_{\ell=0}^\infty [ \wt{f}_{\ell,k}^{(n)} ( \aarr, \vecv_n, \vecw_n ) ]^2 \stackrel{n,n'}{\ll} \| \vecv_n \|_{\ell_1}^2 \| \vecw_n \|_{\ell_1}^2 (n')^{1-\theta}
	\quad \text{for $ n',n = 1, 2, \cdots$}.
	\end{align}
	The constants arising in the above estimates do not depend on $ \aarr $.
\end{lemma}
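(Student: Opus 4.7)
The plan is to reduce all three displayed inequalities to a single pointwise estimate on the tail sums $\wt{f}_{\ell,i}^{(n)}$, which follows from Assumption (D) and the $\ell_1$-boundedness of the weighting vectors. First I would use the trivial inequality
$$
  \left| \sum_{\nu,\mu=1}^{d_n} v_{n\nu} w_{n\mu} x_{\nu\mu} \right| \le \| \vecv_n \|_{\ell_1} \| \vecw_n \|_{\ell_1} \sup_{\nu,\mu} |x_{\nu\mu}|
$$
together with (D) to obtain, for every $\ell \ge 0$ and $j \ge 0$,
$$
  | f_{\ell,j}^{(n)}( \aarr, \vecv_n, \vecw_n ) | \stackrel{n,j,\ell}{\ll} \| \vecv_n \|_{\ell_1} \| \vecw_n \|_{\ell_1} (j \vee 1)^{-3/4-\theta/2} ((j+\ell) \vee 1)^{-3/4-\theta/2},
$$
with an implied constant depending only on the constant in (D); in particular it is independent of $n$ and of $\aarr$.

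Next I would sum over $j \ge i$ and split the range at $j = \max(i,\ell)$. On one side the factor $(j+\ell)^{-3/4-\theta/2}$ is essentially constant and the remaining sum behaves like $\ell^{1/4-\theta/2}$; on the other side one simply uses $(j+\ell) \ge j$ and integrates $j^{-3/2-\theta}$. Both halves give the same order, and the resulting key estimate is
$$
  | \wt{f}_{\ell,i}^{(n)}( \aarr, \vecv_n, \vecw_n ) | \stackrel{n,i,\ell}{\ll} \| \vecv_n \|_{\ell_1} \| \vecw_n \|_{\ell_1} (\max(i,\ell) \vee 1)^{-1/2-\theta},
$$
uniformly in $n$ and in the coefficient array $\aarr$ satisfying~(D).

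Once this estimate is established, each of the three bounds becomes a routine double-index summation. For (\ref{FCoeff2}), writing $l = r+k$ gives $\sum_{k=1}^{n'} \sum_{r \ge 0} (r+k)^{-1-2\theta} \ll \sum_{k=1}^{n'} k^{-2\theta} \ll (n')^{1-2\theta} \le (n')^{1-\theta}$, using $\theta \in (0,1/2)$. For (\ref{FCoeff3}) the inner sum $\sum_{\ell \ge 0} \max(k,\ell)^{-1-2\theta}$ evaluates to $\ll k^{-2\theta}$, and again the outer sum is $\ll (n')^{1-2\theta}$. For (\ref{FCoeff1}), as written the integrand is identically zero; I read this as a typographical conflation and interpret the intended statement as a truncation $\sum_{i=1}^{n'}$ (or a telescoped difference such as $\wt{f}_{\ell,i}^{(n)}-\wt{f}_{\ell,i+n'}^{(n)}$, which by summation by parts reduces to $\sum_j f_{\ell,j}^{(n)}$ on a bounded range). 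Either reading yields, via the same key estimate, a bound of order $(n')^{1-2\theta}$.

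The main obstacle, and the reason (D) is formulated with the sharp exponent $3/4+\theta/2$, is calibrating the split so that $\wt{f}_{\ell,i}^{(n)}$ decays strictly faster than $(i \vee \ell)^{-1/2}$; any weaker decay would produce a non-summable inner series and destroy the uniform constants in $n$ that are essential for the applications in Lemma~\ref{CorollaryBetaSq} and Lemma~\ref{LemmaMartingaleApprox}. The assertion that the hidden constants do not depend on $\aarr$ is automatic from this proof, since we only ever invoke the uniform decay bound (D) and never any finer properties of the coefficients.
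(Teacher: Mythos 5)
The paper gives no proof of this lemma (it is ``omitted for brevity''), so there is nothing to compare against; judged on its own, your argument is essentially correct and is the natural route. The pointwise bound $|f_{\ell,j}^{(n)}| \ll \| \vecv_n \|_{\ell_1}\| \vecw_n \|_{\ell_1}(j\vee 1)^{-3/4-\theta/2}((j+\ell)\vee 1)^{-3/4-\theta/2}$ is exactly what (D) and the $\ell_1$-bound give, and your split of the tail sum at $j=\max(i,\ell)$ yields $|\wt{f}_{\ell,i}^{(n)}| \ll \| \vecv_n \|_{\ell_1}\| \vecw_n \|_{\ell_1}(\max(i,\ell)\vee 1)^{-1/2-\theta}$, which is in fact slightly sharper than the exponent $-1/2-\tfrac34\theta$ the paper extracts in its Lemma on the $\wt f$-coefficients; either suffices. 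With this, \eqref{FCoeff2} and \eqref{FCoeff3} are the routine summations you describe (reading the outer sums as running to $n'$, which is clearly what the usage in the martingale-approximation lemma requires), and the uniformity in $\aarr$ is automatic. You are also right that \eqref{FCoeff1} as printed is vacuous; the version actually invoked in bounding the term $P^{(n)}_{n',m'}$ is the telescoped difference $\wt{f}_{\ell,i}^{(n)}-\wt{f}_{\ell,i+n'}^{(n)}$ summed over all $i\ge 1$.

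The one place where your write-up is too quick is precisely that telescoped reading. The key estimate alone does not close it: bounding $|\wt{f}_{\ell,i}^{(n)}-\wt{f}_{\ell,i+n'}^{(n)}|$ by $|\wt{f}_{\ell,i}^{(n)}|$ and summing the squares over \emph{all} $i\ge 1$ produces $\sum_{i\ge 1} i^{-2\theta}$, which diverges for $\theta<1/2$. You need two regimes: for $i\le n'$ use the tail bound as you do, giving $\ll \sum_{i\le n'} i^{-2\theta}\ll (n')^{1-2\theta}$; for $i> n'$ use that the difference is a sum of only $n'$ terms, so
\begin{equation*}
\bigl|\wt{f}_{\ell,i}^{(n)}-\wt{f}_{\ell,i+n'}^{(n)}\bigr|\le \sum_{j=i}^{i+n'-1}|f_{\ell,j}^{(n)}| \ \ll\ \| \vecv_n \|_{\ell_1}\| \vecw_n \|_{\ell_1}\, n'\, i^{-3/4-\theta/2}\bigl(\max(i,\ell)\bigr)^{-3/4-\theta/2},
\end{equation*}
whence $\sum_{i>n'}\sum_{\ell\ge 0}(\cdot)^2 \ll (n')^2\sum_{i>n'} i^{-2-2\theta} \ll (n')^{1-2\theta}$. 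This is the ``bounded range'' reduction you allude to, but it rests on the pointwise bound for $f_{\ell,j}^{(n)}$ together with the split at $i=n'$, not on the tail estimate for $\wt{f}_{\ell,i}^{(n)}$ alone; spelling it out closes the only real gap in the proposal.
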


\begin{proof} Omitted for brevity.
\end{proof}

The following lemma provides estimates needed to study a change of the coefficients.

\begin{lemma}
	\label{LemmaFcoeffs}
	Under Assumption (D) it holds for vectors $ \vecv_n, \vecw_n $ with finite $ \ell_1 $-norms:
	\begin{itemize}
		\item[(i)] $ | \wt{f}_{\ell,0}^{(n)}(\aarr, \vecv_n, \vecw_n ) | \stackrel{n}{\ll} \| \vecv_n \|_{\ell_1} \| \vecw_n \|_{\ell_1} \ell^{-3/4-\theta/2} $, $ \ell \ge 1 $.
		\item[(ii)] $ \sum_{j=1}^{n'} \sum_{\ell=j}^{\infty} [ \wt{f}_{\ell,0}^{(n)}( \barr, \vecv_n, \vecw_n ) ]^2 
		\sum_{\ell=j}^{\infty} [ \wt{f}_{\ell,0}^{(n)}( \carr, \wt\vecv_n, \wt\vecw_n ) ]^2 \stackrel{n,n'}{\ll} \| \vecv_n \|_{\ell_1}^2 \| \vecw_n \|_{\ell_1}^2  \| \wt\vecv_n \|_{\ell_1}^2 \| \wt\vecw_n \|_{\ell_1}^2 (n')^{1-\theta} $.
		\item[(iii)] 
		$ \sum_{j=1}^{n'} \sum_{\ell=j}^\infty \left| \wt{f}_{\ell,0}^{(n)}( \barr, \vecv_n, \vecw_n ) \wt{f}_{\ell,0}^{(n)}( \carr, \wt\vecv_n, \wt\vecw_n ) \right| \stackrel{n,n'}{\ll}  
		\| \vecv_n \|_{\ell_1} \| \vecw_n \|_{\ell_1} \| \wt\vecv_n \|_{\ell_1} \| \wt\vecw_n \|_{\ell_1}
		(n')^{1-\theta} $.
	\end{itemize}
\end{lemma}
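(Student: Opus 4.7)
The plan is to establish (i) first by a direct triangle-inequality estimate and then feed the resulting decay rate into (ii) and (iii). To prove (i), start from the bound
\[
|\wt{f}_{\ell,0}^{(n)}(\aarr,\vecv_n,\vecw_n)| \le \sum_{j=0}^{\infty} |f_{\ell,j}^{(n)}(\aarr,\vecv_n,\vecw_n)|,
\]
apply the triangle inequality to the double sum over $\nu,\mu$ in the definition of $f_{\ell,j}^{(n)}$, and pull out $\|\vecv_n\|_{\ell_1}\|\vecw_n\|_{\ell_1}$ via H\"older's inequality to obtain
\[
|f_{\ell,j}^{(n)}| \le 2\|\vecv_n\|_{\ell_1}\|\vecw_n\|_{\ell_1}\,A_j\,A_{j+\ell},\qquad A_k := \sup_{n\ge 1,\,1\le\nu\le d_n}|a_{nk}^{(\nu)}|.
\]
Assumption (D) gives $A_k \ll \max(1,k)^{-3/4-\theta/2}$, so the task reduces to controlling $\sum_{j=0}^{\infty}\max(1,j)^{-3/4-\theta/2}(j+\ell)^{-3/4-\theta/2}$. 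Splitting this at $j=\ell$ and using $j+\ell\asymp\ell$ on $\{0\le j\le\ell\}$ and $j+\ell\asymp j$ on $\{j>\ell\}$ produces the advertised polynomial decay in $\ell$.

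For (ii) and (iii), I would plug (i) into the inner tail sums over $\ell\ge j$. Writing $K$ for the relevant product of $\ell_1$-norms, squaring (i) gives $[\wt{f}_{\ell,0}^{(n)}]^2 \ll K\,\ell^{-3/2-\theta}$, hence
\[
\sum_{\ell=j}^{\infty}[\wt{f}_{\ell,0}^{(n)}]^2 \ll K\,j^{-1/2-\theta}.
\]
For (ii), the product of the two tails (one for $\barr$, one for $\carr$) is $\ll K\,j^{-1-2\theta}$, whose outer sum $\sum_{j=1}^{n'}j^{-1-2\theta}$ stays bounded in $n'$ since $1+2\theta>1$, and is therefore $\ll K\,(n')^{1-\theta}$. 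For (iii), using the pointwise product bound $|\wt{f}_{\ell,0}(\barr)\wt{f}_{\ell,0}(\carr)| \ll K\,\ell^{-3/2-\theta}$ leads to an inner tail $\ll K\,j^{-1/2-\theta}$, whose outer sum $\sum_{j=1}^{n'} j^{-1/2-\theta} \ll K\,(n')^{1/2-\theta} \le K\,(n')^{1-\theta}$ because $\theta\in(0,1/2)$.

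The main obstacle is attaining the sharp decay in (i). The direct split-at-$\ell$ strategy natively yields $\ell^{-1/2-\theta}$ rather than $\ell^{-3/4-\theta/2}$, since the regime $\{0\le j\le\ell\}$ contributes $\ell^{-3/4-\theta/2}\sum_{j\le\ell}\max(1,j)^{-3/4-\theta/2}\asymp\ell^{-1/2-\theta}$. Recovering the stated rate probably requires a finer decomposition: isolate the boundary term $j=0$ (which already matches the target $\ell^{-3/4-\theta/2}$) and treat the interior via an interpolation $(j+\ell)^{-3/4-\theta/2}=(j+\ell)^{-\alpha}(j+\ell)^{-(3/4+\theta/2-\alpha)}$ with $\alpha$ chosen so that the remaining $j$-series converges. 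Crucially, even with the slightly weaker rate $\ell^{-1/2-\theta}$ in (i), the computations above still deliver the $(n')^{1-\theta}$ bound in (ii) and (iii), since $1-4\theta\le 1-\theta$ and $1-2\theta\le 1-\theta$ for $\theta\ge 0$, so the downstream applications of the lemma remain valid either way.
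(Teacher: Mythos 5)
Your proposal is essentially the paper's argument for parts (ii) and (iii), and your worry about part (i) is well founded — but it is resolved in exactly the way you suspect. The paper's own proof of (i) does \emph{not} establish the exponent $-3/4-\theta/2$ either: it factors $j^{-3/4-\theta/2}=j^{-1-\theta/4}\,j^{1/4-\theta/4}$, absorbs the second factor into $(\ell+j)^{-3/4-\theta/2}$ to get $|f_{\ell,j}^{(n)}|\stackrel{n}{\ll}\|\vecv_n\|_{\ell_1}\|\vecw_n\|_{\ell_1}\,j^{-1-\theta/4}\,\ell^{-1/2-\frac34\theta}$, sums the convergent series in $j$, and concludes only $|\wt{f}_{\ell,0}^{(n)}(\aarr,\vecv_n,\vecw_n)|\stackrel{n}{\ll}\|\vecv_n\|_{\ell_1}\|\vecw_n\|_{\ell_1}\,\ell^{-1/2-\frac34\theta}$. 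Your split-at-$j=\ell$ convolution bound $\ell^{-1/2-\theta}$ is in fact sharp under Assumption (D) alone (take $a_{nj}^{(\nu)}\asymp j^{-3/4-\theta/2}$, so that $\sum_j j^{-3/4-\theta/2}(j+\ell)^{-3/4-\theta/2}\asymp\ell^{1-3/2-\theta}$), and it is marginally stronger than the paper's $\ell^{-1/2-3\theta/4}$; the displayed exponent in the statement of (i) is an overstatement that no decomposition will recover, so you should not pursue the proposed interpolation refinement. What matters is precisely the point you make at the end: any rate of the form $\ell^{-1/2-c\theta}$ with $c>0$ feeds through the tail sums $\sum_{\ell\ge j}$ and the outer sum over $j\le n'$ to give the $(n')^{1-\theta}$ bounds in (ii) and (iii) (the paper runs the identical computation with its weaker exponent, writing $\tfrac{1}{n'}\sum_{j=1}^{n'}j^{-3\theta}\ll(n')^{-\theta}$ and $\tfrac{1}{n'}\sum_{j=1}^{n'}j^{-3\theta/2}\ll(n')^{-\theta}$), and these are the only facts used downstream, e.g.\ in Lemma~\ref{CorollaryBetaSq} and Lemma~\ref{KonvKovBeta}. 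So your proof is correct as it stands once (i) is read with the attainable exponent.
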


\begin{proof}[{\bf Proof of  Lemma~\ref{LemmaFcoeffs}}] By virtue of Assumption (D), we have for $ j, \ell \ge 1 $ with $ C_n = C_n(\vecv_n, \vecw_n) =  \| \vecv_n \|_{\ell_1} \| \vecw_n \|_{\ell_1} $ 
	\begin{align*}
	| f_{l,j}^{(n)}( \aarr, \vecv_n, \vecw_n ) |  & \stackrel{n}{\ll} C_n j^{-3/4-\theta/2} (\ell+j)^{-3/4-\theta/2}  \\
	& \stackrel{n}{\ll} C_n j^{-1-\theta/4} j^{1/4-\theta/4} (\ell+j)^{-3/4-\theta/2} \\
	& \stackrel{n}{\ll} C_n j^{-1-\theta/4}  (\ell+j)^{-1/2-\frac34 \theta} \\
	& \stackrel{n}{\ll} C_n j^{-1-\theta/4}  \ell^{-1/2 - \frac34 \theta }.
	\end{align*}
	Hence $ \sum_{j=1}^\infty | f_{\ell,j}^{(n)}( \aarr, \vecv_n, \vecw_n ) |  \stackrel{n}{\ll} C_n \ell^{-1/2- \frac34 \theta} $.
	Since $ | f_{\ell,0}^{(n)}( \aarr, \vecv_n, \vecw_n ) |  \stackrel{n}{\ll} C_n \ell^{-3/4-\theta/2} $ for $ \ell \ge 1 $, we obtain
	\[
	| \wt{f}_{\ell,0}^{(n)}( \aarr, \vecv_n, \vecw_n ) |  \stackrel{n}{\ll} C_n \ell^{-1/2-\frac34 \theta}, \qquad \ell \ge 1.
	\]
	Next we show (ii). Put $ K_n = C_n( \vecv_n, \vecw_n ) C_n( \wt\vecv_n, \wt\vecw_n ) $. By (i) we have for $ j \ge 1 $
	\[
	\sum_{\ell=j}^\infty [ \wt{f}_{\ell,0}^{(n)}( \aarr, \vecv_n, \vecw_n) ]^2 
	\stackrel{n}{\ll} C_n^2 \sum_{\ell=j}^\infty \ell^{-1-\frac32 \theta} 
	\stackrel{n}{\ll} C_n^2 \lim_{b \to \infty} \int_j^b x^{-1-\frac32 \theta} \, dx
	\stackrel{n}{\ll} C_n^2 j^{-\frac32 \theta},
	\]
	such that
	\begin{align*}
	\frac{1}{n'} \sum_{j=1}^{n'} \sum_{\ell=j}^\infty [ \wt{f}_{\ell,0}^{(n)}( \barr, \vecv_n, \vecw_n ) ]^2 \sum_{\ell=j}^\infty [ \wt{f}_{\ell,0}^{(n)}( \carr, \wt\vecv_n, \wt\vecw_n ) ]^2 
	&  \stackrel{n,n'}{\ll} K_n^2 (n')^{-\theta} \sum_{j=1}^{n'} (1/j)^{3\theta} (1/n')^{1-\theta} \\
	& \stackrel{n,n'}{\ll} K_n^2 (n')^{-\theta} \sum_{j=1}^{n'} j^{-1-4\theta} 
	\stackrel{n,n'}{\ll}  K_n^2 (n')^{-\theta},
	\end{align*}
	which establishes (ii). To show (iii), observe that for $ j \ge 1 $ 
	\begin{align*}
	\sum_{\ell=j}^\infty \left| \wt{f}_{\ell,0}^{(n)}( \barr, \vecv_n, \vecw_n ) \wt{f}_{\ell,0}^{(n)}( \carr, \wt\vecv_n, \wt\vecw_n ) \right|
	\stackrel{n}{\ll} K_n \sum_{\ell=j}^\infty   \ell^{-1- \frac32 \theta}  
	\stackrel{n}{\ll} K_n\lim_{b\to \infty} \int_j^b x^{-1- \frac32 \theta} \, dx 
	\stackrel{n}{\ll} K_n j^{-\frac32 \theta}.
	\end{align*}
	Therefore, for $ n' \ge 1 $
	\begin{align*}
	\frac{1}{n'} \sum_{j=1}^{n'} \sum_{\ell=j}^\infty \left| \wt{f}_{\ell,0}^{(n)}( \barr, \vecv_n, \vecw_n ) \wt{f}_{\ell,0}^{(n)}( \carr, \wt\vecv_n, \wt\vecw_n ) \right|
	&  \stackrel{n,n'}{\ll} K_n  \frac{1}{n'} \sum_{j=1}^{n'}  j^{-\frac32 \theta} \\
	&  \stackrel{n,n'}{\ll} K_n (n')^{-\theta} \sum_{j=1}^{n'} (1/j)^{\frac32 \theta} (1/n')^{1-\theta} \\
	& \stackrel{n,n'}{\ll} K_n (n')^{-\theta}  \sum_{j=1}^{\infty} j^{-1-\theta/2} 
	\stackrel{n,n'}{\ll} K_n (n')^{-\theta}.
	\end{align*}
\end{proof}

\subsection{Proofs for Subsection 7.2 (Martingale Approximations)}

Recall the following definitions: Introduce for coefficients $ \aarr $ satisfying Assumption (D) and vectors $ \vecv_n $ and $ \vecw_n $ the $ \calF_{nk} $-martingales
\[
M_k^{(n)}( \aarr, \vecv_n, \vecw_n ) = \wt{f}_{0,0}^{(n)}( \aarr, \vecv_n, \vecw_n ) \sum_{i=0}^k ( \epsilon_{ni}^2 - \sigma_i^2) + \sum_{i=0}^k \epsilon_{ni} \sum_{l=1}^\infty 
\wt{f}_{l,0}^{(n)}( \aarr, \vecv_n, \vecw_n ) \epsilon_{n,i-j}, \qquad k \ge 0,
\]
which start in $ M_0^{(n)} = 0 $, for each $ n \ge 0 $.  Put
\[
S_{n',m'}^{(n)}( \aarr, \vecv_n, \vecw_n ) = \sum_{i=m'+1}^{m'+n'} ( Y_{ni}( \ava ) Y_{ni}( \awa ) - \EE[ Y_{ni}( \ava ) Y_{ni}( \awa ) ] ), \qquad m', n' \ge 0.
\]
Notice that, by definitions (16) and (17) %(\ref{Def_vecU}) and (\ref{Def_vecD}),
\begin{equation}
\label{EqualForApprox}
S_{k,0}^{(n)}( \barr, \vecv_n, \vecw_n)  = \vecD_{nk}^{(1)}, \qquad S_{k,0}^{(n)}( \carr, \vecv_n, \vecw_n )  = \vecD_{nk}^{(2)},
\end{equation}
for $ k \ge 1 $ and $  n \ge 1 $, where $ \vecD_{nk} = ( \vecD_{nk}^{(1)}, \vecD_{nk}^{(2)}) $. 
For brevity introduce the difference operator
\begin{align*}
\delta M_{m'+n'}^{(n)}( \aarr, \vecv_n, \vecw_n ) & = M_{m'+n'}^{(n)}( \aarr, \vecv_n, \vecw_n ) - M_{m'}^{(n)}( \aarr, \vecv_n, \vecw_n )  \\
&= \wt{f}_{0,0}^{(n)}( \aarr, \vecv_n, \vecw_n ) \sum_{i=m'+1}^{m'+n'} ( \epsilon_{ni}^2 - \sigma_{ni}^2) + \sum_{i=m'+1}^{m'+n'} \epsilon_{ni} \sum_{l=1}^\infty 
\wt{f}_{l,0}^{(n)}( \aarr, \vecv_n, \vecw_n ) \epsilon_{n,i-\ell}, \qquad k, n \ge 1,
\end{align*}
which takes the lag $ n' $ forward difference at $ m' $.
Notice that for $ m' = 0 $ 
\begin{equation}
\label{Martingale_m'0} 
\delta M_k^{(n)}( \aarr, \vecv_n, \vecw_n ) = \wt{f}_{0,0}^{(n)}( \aarr, \vecv_n, \vecw_n ) \sum_{i=1}^k ( \epsilon_{ni}^2 - \sigma_{ni}^2) + \sum_{i=1}^k \epsilon_{ni} \sum_{l=1}^\infty  \wt{f}_{l,0}^{(n)}( \aarr, \vecv_n, \vecw_n ) \epsilon_{n,i-\ell}, \qquad k, n \ge 1,
\end{equation}
coincides with the martingale $ M_k^{(n)}( \aarr, \vecv_n, \vecw_n ) $. A direct calculation shows that 
\begin{align}
\label{CovMartingale_shifted}
& \Cov( \delta M_{m'+n'}^{(n)}( \barr, \vecv_n, \vecw_n ), \delta M_{m'+n'}^{(n)}( \carr, \wt\vecv_n, \wt\vecw_n )  )  \\ \nonumber
& \ =  \wt{f}_{0,0}^{(n)}( \barr, \vecv_n, \vecw_n ) \wt{f}_{0,0}^{(n)}( \carr, \wt\vecv_n, \wt\vecw_n ) 
\sum_{j=1}^{n'} ( \gamma_{n,m'+j} + \sigma_{n,m'+j}^4 ) 
% \\ \nonumber & \qquad \qquad
+ \sum_{j=1}^{n'} \sum_{\ell=1}^\infty \wt{f}_{\ell,0}^{(n)}( \barr, \vecv_n, \vecw_n ) \wt{f}_{\ell,0}^{(n)}( \carr, \wt\vecv_n, \wt\vecw_n ) \sigma_{n,m'+j}^2 \sigma_{n,m'+j-\ell}^2,
\end{align}
for $ n', m' \ge 0 $ and $ n \ge 1 $.

\begin{proof}[{\bf Proof of Lemma~1}]
	For $ n, n' \ge 1 $ and $ m' \ge 0 $ put
	\begin{align*}
	\beta_{n,n',m'}^2( \barr, \vecv_n, \vecw_n, \carr, \wt\vecv_n, \wt\vecw_n ) 
	& = \wt{f}_{0,0}^{(n)}( \barr, \vecv_n, \vecw_n ) \wt{f}_{0,0}^{(n)}( \carr, \wt\vecv_n, \wt\vecw_n ) \frac{1}{n'} \sum_{j=1}^{n'} ( \gamma_{n,m'+j} - \sigma_{n,m'+j}^4 ) \\
	& \qquad + \frac{1}{n'} \sum_{j=1}^{n'} \sigma_{n,m'+j}^2 \sum_{\ell=1}^{j-1} \wt{f}_{\ell,0}^{(n)}( \barr, \vecv_n, \vecw_n ) \wt{f}_{\ell,0}^{(n)}( \carr, \wt\vecv_n, \wt\vecw_n ) \sigma_{n,m'+j-\ell}^2.
	\end{align*}
	Then
	\begin{align*}
	& \left| \Cov( \delta M_{m'+n'}^{(n)}( \barr, \vecv_n, \vecw_n ), \delta M_{m'+n'}^{(n)}( \carr, \wt\vecv_n, \wt\vecw_n ) )  - (n') \beta_{n,n',m'}^2( \barr, \vecv_n, \vecw_n, \carr, \wt\vecv_n, \wt\vecw_n )
	\right|  \\
	& \qquad \le \left( \sup_k E( \epsilon_k^2) \right)^2 \sum_{j=1}^{n'} \sum_{\ell=1}^\infty \left| \wt{f}_{\ell,0}^{(n)}( \barr, \vecv_n, \vecw_n ) \wt{f}_{\ell,0}^{(n)}( \carr, \wt\vecv_n, \wt\vecw_n ) \right|  \\
	& \qquad	\stackrel{n,n',m'}{\ll} K_n (n')^{1-\theta},
	\end{align*}
	by  Lemma~\ref{LemmaFcoeffs} (iii).
	We shall prove that one may replace $ \beta_{n,n',m'}^2( \barr, \vecv_n, \vecw_n, \carr, \wt\vecv_n, \wt\vecw_n ) $ by 
	\[
	\beta_{n,n'}^2( \barr, \vecv_n, \vecw_n, \carr, \wt\vecv_n, \wt\vecw_n ) = \wt{f}_{0,0}^{(n)}( \barr, \vecv_n, \vecw_n ) \wt{f}_{0,0}^{(n)}( \carr, \wt\vecv_n, \wt\vecw_n ) ( \gamma_n - s_{n1}^4 ) 
	+ \frac{s_{n1}^4}{n'} \sum_{j=1}^{n'} \sum_{\ell=1}^{j-1} \wt{f}_{\ell,0}^{(n)}( \barr, \vecv_n, \vecw_n ) \wt{f}_{\ell,0}^{(n)}( \carr, \wt\vecv_n, \wt\vecw_n )
	\]
	with an error term of order  $ O( K_n  \| \vecw_n \|_{\ell_1}^2  (n')^{1-\theta} )$.
	Then a further application of  Lemma~\ref{LemmaFcoeffs} (iii) shows that the range of summation for $ \ell $ can be extended to $ \N $ and gives $ | \beta_{n,n'}^2( \barr, \vecv_n, \vecw_n, \carr, \wt\vecv_n, \wt\vecw_n ) - \beta_n^2( \barr, \vecv_n, \vecw_n, \carr, \wt\vecv_n, \wt\vecw_n ) | \stackrel{n,n'}{\ll} K_n  \| \vecw_n \|_{\ell_1}^2  (n')^{1-\theta} $, such that the assertion follows.
	First observe the following fact:
	If $ \{ \alpha_n^*, \alpha_{nk} : k \ge 1 \} \subset \R $, $ n \ge 1 $, satisfy $  (n')^{-1} \sum_{i=1}^{n'} i | \alpha_{ni} - \alpha_n^* | \stackrel{n,n'}{\ll} (n')^{-\beta} $ for some $ \beta > 1 $, then 
	\[
	\sup_{m' \ge 1}  \frac{1}{n'} \sum_{i=m'+1}^{m'+n'}  i | \alpha_{ni} - \alpha_n^*  | \stackrel{n,n',m'}{\ll} (n')^{-1}  \stackrel{n,n',m'}{\ll} (n')^{-\theta} 
	\]
	This follows from
	\begin{align*}
	\frac{1}{n'} \sum_{i=m'+1}^{m'+n'}  i | \alpha_{ni} - \alpha_n^* |
	& = \frac{n'+m'}{n'} \left\{  \frac{1}{n'+m'} \sum_{i=1}^{m'+n'} i |\alpha_{ni} - \alpha_n^*| - \frac{m'}{m'+n'} \frac{1}{m'} \sum_{i=1}^{m'} i | \alpha_{ni} - \alpha_n^* |  \right\} 
	\end{align*}
	which implies 
	\begin{align*}
	\frac{1}{n'} \sum_{i=m'+1}^{m'+n'} i | \alpha_{ni} - \alpha_n^* |  & \stackrel{n,n',m'}{\ll} \frac{n'+m'}{n'} (m'+n')^{-\beta} + \frac{m'}{n'} (m')^{-\beta} \\
	& \stackrel{n,n',m'}{\ll}  (n')^{-1} (m'+n')^{1-\beta} + (n')^{-1} (m')^{1-\beta} 
	\stackrel{n,n',m'}{\ll} (n')^{-1},
	\end{align*}
	since $ \beta > 1 $.  Next observe that
	\begin{equation}
	\label{FTildeBounded}
	\wt{f}_{0,0}^{(n)}( \barr, \vecv_n, \vecw_n ), \max_{j=1, \ldots, n'} \max_{\ell = 1, \ldots, n'-1} \wt{f}_{j-\ell,0}^{(n)}( \barr, \vecv_n, \vecw_n ) \stackrel{n,n}{\ll}  \| \vecv_n \|_{\ell_1} \| \vecw_n \|_{\ell_1} ,
	\end{equation}
	and anolgous estimates hold for the triple $ (\carr, \wt\vecv_n, \wt\vecw_n) $. Because $ \theta \le \beta $ we have $ (n')^{-\beta} \le (n')^{-\theta} $ and therefore by using 
	\begin{equation} 
	\label{ConditionOnVariances}
	\frac{1}{n'} \sum_{i=1}^{n'} i | \sigma_{ni}^2 - s_{n1}^2 | \stackrel{n,n'}{\ll}  (n')^{-\beta}  
	\end{equation}
	and   
	\begin{equation} 
	\label{ConditionOnGammas}
	\frac{1}{n'}  \sum_{i=1}^{n'} i |\gamma_{ni} - \gamma_n | \stackrel{n,n'}{\ll} (n')^{-\beta}  
	\end{equation} 
	for constants $ s_{n1}^2 \in (0,\infty) $ and $ \gamma_n \in \R $ for some $ 1 < \beta < 2 $ with $  1 + \theta  < \beta $, and the decomposition
	\[
	\sigma_{n,m'+j}^4 - s_{n1}^4 = (\sigma_{n,m'+j}^2 - s_{n1}^2)(\sigma_{n,m'+j}^2 + s_{n1}^2) 
	\]
	where $ (n')^{-1} \sum_{j=1}^{n'} \sigma_{n,m'+j}^2 \ll 1 $, we obtain
	\begin{equation}
	\label{BetaStep1}
	\left| \wt{f}_{0,0}^{(n)}( \barr, \vecv_n, \vecw_n ) \wt{f}_{0,0}^{(n)}( \carr, \wt\vecv_n, \wt\vecw_n ) \frac{1}{n'} \sum_{j=1}^{n'} ( \gamma_{n,m'+j} - \sigma_{n,m'+j}^4 ) -  \wt{f}_{0,0}^{(n)}( \barr, \vecv_n, \vecw_n ) \wt{f}_{0,0}^{(n)}( \carr, \wt\vecv_n, \wt\vecw_n ) ( \gamma_n - s_{n1}^4 )  \right| 
	\stackrel{n,n'}{\ll} K_n  (n')^{-\theta}.
	\end{equation}
	Next we show that the second term of $ \beta_{n,n',m'}^2( \barr, \vecv_n, \vecw_n, \carr, \wt\vecv_n, \wt\vecw_n ) $ can be replaced by the second term of $ \beta_n^2(\barr, \vecv_n, \vecw_n, \carr, \wt\vecv_n, \wt\vecw_n) $. Use 
	\begin{align*}
	\sigma_{n,m'+j}^2 \sigma_{n,m'+\ell}^2  - s_{n1}^4  &=   (\sigma_{n,m'+j}^2 - s_{n1}^2) \sigma_{n,m'+\ell}^2  + (\sigma_{n,m'+\ell}^2 - s_{n1}^2) s_{n1}^2 
	\end{align*}
	to otain
	\begin{align*}
	& \frac{1}{n'} \sum_{j=1}^{n'} \sigma_{n,m'+j}^2 \sum_{\ell=1}^{j-1} \wt{f}_{\ell,0}^{(n)}( \barr, \vecv_n, \vecw_n ) \wt{f}_{\ell,0}^{(n)}( \carr, \wt\vecv_n, \wt\vecw_n ) \sigma_{n,m'+j-\ell}^2 -
	\frac{s_{n1}^4}{n'} \sum_{j=1}^{n'} \sum_{\ell=1}^{j-1} \wt{f}_{\ell,0}^{(n)}( \barr, \vecv_n, \vecw_n ) \wt{f}_{\ell,0}^{(n)}( \carr, \wt\vecv_n, \wt\vecw_n ) 
	\\
	& \quad = \frac{1}{n'} \sum_{j=1}^{n'} \sum_{\ell=1}^{j-1} \wt{f}_{j-\ell,0}^{(n)}( \barr, \vecv_n, \vecw_n ) \wt{f}_{j-\ell,0}^{(n)}( \carr, \wt\vecv_n, \wt\vecw_n )[ \sigma_{n,m'+j}^2  \sigma_{n,m'+\ell}^2 - s_{n1}^4] \\
	& \quad = \frac{1}{n'} \sum_{j=1}^{n'} \sum_{\ell=1}^{j-1} \wt{f}_{j-\ell,0}^{(n)}( \barr, \vecv_n, \vecw_n ) \wt{f}_{j-\ell,0}^{(n)}( \carr, \wt\vecv_n, \wt\vecw_n ) (\sigma_{n,m'+j}^2 - s_{n1}^2) \sigma_{n,m'+\ell}^2   \\
	& \qquad + \frac{1}{n'} \sum_{j=1}^{n'}  \sum_{\ell=1}^{j-1} \wt{f}_{j-\ell,0}^{(n)}( \barr, \vecv_n, \vecw_n ) \wt{f}_{j-\ell,0}^{(n)}( \carr, \wt\vecv_n, \wt\vecw_n ) (\sigma_{n,m'+\ell}^2 - s_{n1}^2) s_{n1}^2.
	\end{align*}	
	The first term of the last decomposition can be estimated as follows.
	\begin{align*}
	& \left| \frac{1}{n'} \sum_{j=1}^{n'}  \sum_{\ell=1}^{j-1} \wt{f}_{j-\ell,0}^{(n)}( \barr, \vecv_n, \vecw_n ) \wt{f}_{j-\ell,0}^{(n)}( \carr, \wt\vecv_n, \wt\vecw_n ) (\sigma_{n,m'+j}^2 - s_{n1}^2) \sigma_{n,m'+\ell}^2  \right| \\
	& \stackrel{n,n',m'}{\ll} \frac{K_n}{n'} \sum_{j=1}^{n'}   \sum_{\ell=1}^{j-1}  | \sigma_{n,m'+j}^2 - s_{n1}^2 | \sigma_{n,m'+\ell}^2 \\
	& \le \frac{K_n}{n'} \sum_{j=2}^{n'} (j-1) |\sigma_{n,m'+j}^2  - s_{n1}^2 |  \frac{1}{j-1} \sum_{\ell=1}^{j-1} |\sigma_{n,m'+\ell}^2 - s_{n1}^2| +  \frac{K_n}{n'} \sum_{j=2}^{n'} s_{n1}^2  (j-1) |\sigma_{n,m'+j}^2 - s_{n1}^2| \\
	& \stackrel{n,n',m'}{\ll} \frac{K_n}{n'} \sum_{j=2}^{n'} | \sigma_{n,m'+j}^2 + s_{n1}^2 |  (j-1)^{1-\beta} + K_n (n')^{-\beta}  
	\stackrel{n',m'}{\ll} K_n  (n')^{-\beta}  
	\stackrel{n'}{\ll} K_n  (n')^{-\theta}.
	\end{align*}
	Similarly, for the second term we have
	\begin{align*}
	&\left| \frac{1}{n'} \sum_{j=1}^{n'}  \sum_{\ell=1}^{j-1} \wt{f}_{j-\ell,0}^{(n)}( \barr, \vecv_n, \vecw_n ) \wt{f}_{j-\ell,0}^{(n)}( \carr, \wt\vecv_n, \wt\vecw_n ) (\sigma_{n,m'+\ell}^2 - s_{n1}^2) s_{n1}^2 \right| \\
	& \stackrel{n,n',m'}{\ll} \frac{K_n}{n'} \sum_{j=2}^{n'}  \sum_{\ell=1}^{j-1}  |\sigma_{n,m'+\ell}^2 - s_{n1}^2| 
	\stackrel{n,n'}{\ll} \frac{K_n}{n'} \sum_{j=2}^{n'} (j-1)^{1-\beta} 
	\stackrel{n'}{\ll} K_n (n')^{-\theta'} 
	\stackrel{n'}{\ll} K_n (n')^{-\theta}
	\end{align*}
	for $ \theta < \theta' < \beta-1 $ ($\theta' $ exists, since $ \theta < \beta - 1 $ by assumption).
	Putting things together, we arrive at
	\[
	\left| \frac{1}{n'} \sum_{j=1}^{n'} \sigma_{n,m'+j}^2 \sum_{\ell=1}^{j-1} \wt{f}_{\ell,0}^{(n)}( \barr, \vecv_n, \vecw_n ) \wt{f}_{\ell,0}^{(n)}( \carr, \wt\vecv_n, \wt\vecw_n ) \sigma_{n,m'+j-\ell}^2 -
	\frac{s_1^4}{n'} \sum_{j=1}^{n'} \sum_{\ell=1}^{j-1} \wt{f}_{\ell,0}^{(n)}( \barr, \vecv_n, \vecw_n ) \wt{f}_{\ell,0}^{(n)}( \carr, \wt\vecv_n, \wt\vecw_n )  \right|
	\ll K_n  (n')^{-\theta}.	
	\]
	Combining the latter estimate with (\ref{BetaStep1}) shows that
	\[
	| \beta_{n,n',m'}^2( \barr, \vecv_n, \vecw_n, \carr, \wt\vecv_n, \wt\vecw_n ) - \beta_{n,n'}^2( \barr, \vecv_n, \vecw_n, \carr, \wt\vecv_n, \wt\vecw_n ) | \stackrel{n,n',m'}{\ll}
	K_n  (n')^{-\theta},
	\]
	which completes the proof.
\end{proof}

\begin{proof}[{\bf Proof of Lemma~2}]
	For brevity of notation, we omit the dependence on $ \vecv_n, \vecw_n $ in notation. 
	Put $ R_{n',m'}^{(n)}(\aarr) = S_{n',m'}^{(n)}( \aarr ) - \delta M_{m'+n'}^{(n)}( \aarr )  $ and note that
	$ R_{n'}^{(n)}( \aarr ) = O_{n',m'}^{(n)}( \aarr ) + P_{n',m'}^{(n)}( \aarr ) + Q_{n',m'}^{(n)}( \aarr ) $, where
	\begin{align*}
	Q_{n',m'}^{(n)}( \aarr ) & =  \sum_{i=1}^{n'-1} \sum_{\ell=0}^{n'-i-1} \wt{f}_{\ell,i+1}^{(n)}( \aarr ) ( \sigma_{n,m'-n'-i}^2 \eins( \ell = 0) - \epsilon_{n,m'+n'-i} \epsilon_{n,m'+n'-i-\ell} ),  \\
	P_{n',m'}^{(n)}( \aarr ) & = \sum_{i=0}^\infty \sum_{\ell=0}^\infty ( \wt{f}_{\ell,i+1}^{(n)}( \aarr ) - \wt{f}_{\ell,i+n'+1}^{(n)}( \aarr ) )( \epsilon_{n,m'-i} \epsilon_{n,m'-i-\ell} - \sigma_{n,m'-i}^2 \eins(\ell=0) ), \\
	O_{n',m'}^{(n)}( \aarr ) & = - \sum_{i=0}^{n'-1} \sum_{k=n'}^\infty \wt{f}_{k-i,i+1}^{(n)}( \aarr ) \epsilon_{n,m'+n'-k} \epsilon_{n,m'+n'-i},
	\end{align*}
	for $ n', m' \ge 0 $. The result can now be shown along the lines of \cite[Lemma~2]{Kouritzin1995} noting the following facts. By independence of $ \{ \epsilon_{nk} : k \in \Z \} $, for any fixed $n$,
	\[
	\EE( Q_{n',m'}^{(n)} )^2 = \sum_{i=0}^{n'-1} [ \wt{f}_{0,i+1}^{(n)}( \aarr ) ]^2 ( \gamma_{n,m'+n'-i} - \sigma_{n,m'+n'-i}^4  )
	+ \sum_{i=0}^{n'-1} \sum_{\ell=1}^{n'-i-1} [\wt{f}_{\ell,i+1}^{(n)}( \aarr )]^2 \sigma_{n,m'-n'-i}^2 \sigma_{n,m'-n'-i-\ell}^2.
	\]
	By virtue of (\ref{FCoeff1}) - (\ref{FCoeff3}), which hold due to the decay assumption (D) on the coefficients of the vector time series, we obtain
	\[
	\EE( Q_{n',m'}^{(n)} )^2 \stackrel{n,n',m'}{\ll} \sum_{i=0}^{n'-1} [\wt{f}_{0,i+1}^{(n)}( \aarr ) ]^2 +
	\sum_{i=0}^{n'-1} \sum_{\ell=1}^{n'-i-1} [ \wt{f}_{\ell,i+1}^{(n)}( \aarr ) ]^2 \stackrel{n,n',m'}{\ll} \| \vecv_n \|_{\ell_1}^2 \| \vecw_n \|_{\ell_1}^2  (n')^{1-\theta},
	\]
	since by assumption $ \sup_{n \ge 1} \sup_{k \ge 1} | \gamma_{n,k} | < \infty $ and $ \sup_{n \ge 1 } \sup_{k \ge 1} \EE | \epsilon_{n,k} |^2 < \infty $, which entails that the rate $ (n')^{1-\theta} $ also applies to innovation arrays $ \{ \epsilon_{nk} : k \in \Z, n \in \N \} $ satisfying Assumption (E). Analogously,
	\[
	\EE( P_{n',m'}^{(n)} )^2 \stackrel{n,n',m'}{\ll} \sum_{i=1}^{\infty} \sum_{\ell=0}^\infty \left( \wt{f}_{\ell,i}^{(n)}( \aarr ) - \wt{f}_{\ell,i+n'}^{(n)}(\aarr)  \right)^2 \stackrel{n,n',m'}{\ll} \| \vecv_n \|_{\ell_1}^2 \| \vecw_n \|_{\ell_1}^2 (n')^{1-\theta}.
	\]
	Lastly, 
	\[
	\EE( O_{n',m'}^{(n)}( \aarr ) )^2 = \EE \lim_{N \to \infty} \left( \sum_{i=0}^{n'-1} \sum_{k=n'}^N \wt{f}_{k-i,i+1}^{(n)}( \aarr ) \epsilon_{n,m'+n'-k} \epsilon_{n,m'+n'-i}  \right)^2,
	\]
	and therefore Fatou's lemma leads to $ \EE(  O_{n',m'}^{(n)}( \aarr ) )^2  \stackrel{n,n',m'}{\ll} \| \vecv_n \|_{\ell_1}^2 \| \vecw_n \|_{\ell_1}^2 (n')^{1-\theta} $. 
	Repeating the arguments provided in \cite{Kouritzin1995} shows that $ R_{n'} $ can be decomposed in three terms which are bounded by the expressions listed in (\ref{FCoeff1}) - (\ref{FCoeff3}), which are $ \stackrel{n',n}{\ll} (n')^{1-\theta} $. 
	Observing that the dependence on the vectors $ \vecv_n, \vecw_n, \wt\vecv_n, \wt\vecw_n $ is only through the coefficients $ \wt{f}_{\ell,j}( \cdot ) $, the remaining assertions follow by recalling (\ref{EqualForApprox}).
\end{proof}

Observe that for i.i.d. error terms with $ \EE( \epsilon_k^2 ) = \sigma^2 $ and $ \EE( \epsilon_k^3 ) = \gamma $ for all $k$
\begin{equation}
\label{FormulaBetaSq2}
\beta_n^2( \barr, \vecv_n, \vecw_n, \carr, \wt\vecv_n, \wt\vecw_n ) = \wt{f}_{0,0}^{(n)}( \barr, \vecv_n, \vecw_n ) \wt{f}_{0,0}^{(n)}( \carr, \wt\vecv_n, \wt\vecw_n ) (\gamma + \sigma^4) 
+ \sigma^4 \sum_{\ell=1}^{\infty} \wt{f}^{(n)}_{\ell,0}( \barr, \vecv_n, \vecw_n ) \wt{f}^{(n)}_{\ell,0}( \carr, \wt\vecv_n, \wt\vecw_n ).
\end{equation}
We write $ \alpha^2_n( \aarr, \vecv_n, \vecw_n) = \beta_n^2( \aarr, \vecv_n, \vecw_n, \aarr, \vecv_n, \vecw_n ) $, $ \aarr \in \{ \barr, \carr \} $. If $ c_{nj}^{(\nu)} = c_j^{(\nu)} $, $ n, \nu \ge 1$, then for projections $ \vecv, \vecw \in \ell_1 $ these quantities do not depend on $n$.

\begin{proof}[{\bf Proof of Lemma~3}] We may apply the method of proof of \cite{Kouritzin1995}. 
	Define for a sequence  $ \aarr $ satisfying Assumption (D) the following approximation for $ M_{m'+j}^{(n)}( \aarr ) - M_{m'}^{(n)}( \aarr ) $,
	\[
	L_j( \aarr, \vecv_n, \vecw_n ) = L_{n,m',j}( \aarr, \vecv_n, \vecw_n ) = \sum_{k=m'+1}^{m'+j} \{  \wt{f}_{0,0}^{(n)}( \aarr, \vecv_n, \vecw_n )( \epsilon_{nk}^2 - \sigma_{nk}^2 ) 
	+ \sum_{\ell=1}^{k-m'-1} \wt{f}_{\ell,0}^{(n)}( \aarr, \vecv_n, \vecw_n ) \epsilon_{nk} \epsilon_{n,k-\ell} \}
	\]
	and let 
	\[
	K_j( \aarr, \vecv_n, \vecw_n ) = M_{m'+j}^{(n)}( \aarr, \vecv_n, \vecw_n ) - M_{m'}^{(n)}( \aarr, \vecv_n, \vecw_n ) - L_j( \aarr, \vecv_n, \vecw_n ) 
	\]
	denote the associated approximation error, for $ j > 0 $, and $ K_j( \aarr, \vecv_n, \vecw_n ) = 0 $, if $ j = 0 $. Then the lag $1$ martingale differences attain the representation
	\[
	\Delta M_{m'+j}^{(n)}( \aarr, \vecv_n, \vecw_n ) := M_{m'+j}^{(n)}( \aarr, \vecv_n, \vecw_n ) - M_{m'+j-1}^{(n)}( \aarr, \vecv_n, \vecw_n ) = \Delta L_j( \aarr, \vecv_n, \vecw_n ) + \Delta K_j( \aarr, \vecv_n, \vecw_n ).
	\]
	where 
	\begin{equation}
	\label{FormulaDeltaLj}
	\Delta L_j( \aarr, \vecv_n, \vecw_n ) = \wt{f}_{0,0}^{(n)}( \aarr, \vecv_n, \vecw_n )( \epsilon_{n,m'+j}^2 - \sigma_{n,m'+j}^2 ) + \sum_{\ell=1}^{j-1} \wt{f}_{\ell,0}^{(n)}( \aarr, \vecv_n, \vecw_n ) \epsilon_{n,m'+j} \epsilon_{n,m'+j-\ell}.
	\end{equation}
	Using the fact that the lag $ n' $ difference operator is the sum of first order differences, i.e.,
	$ \delta M_{m'+n'}^{(n)}( \aarr, \vecv_n, \vecw_n ) = M_{m'+n'}^{(n)}( \aarr, \vecv_n, \vecw_n )  - M_{m'}^{(n)}( \aarr, \vecv_n, \vecw_n ) = \sum_{j=1}^{n'} \Delta M_{m'+j}^{(n)}( \aarr, \vecv_n, \vecw_n )  $, we obtain
	\begin{align*}
	E_{n'}^{(n)}  & = \left\|
	\sum_{j=1}^{n'} \left\{
	\EE [ \delta M_{m'+j}^{(n)}( \barr, \vecv_n, \vecw_n ) \delta M_{m'+j}( \carr, \wt\vecv_n, \wt\vecw_n ) | \calF_{n,m'} ] - \beta_n^2( \barr, \vecv_n, \vecw_n, \carr, \wt\vecv_n, \wt\vecw_n ) \right\}
	\right\|_{L_1} \\
	& \le \left\| 
	\sum_{j=1}^{n'} \{ \EE [ \Delta L_j( \barr, \vecv_n, \vecw_n ) \Delta L_j( \carr, \wt\vecv_n, \wt\vecw_n ) | \calF_{n,m'}  ] - \beta_n^2( \barr, \vecv_n, \vecw_n, \carr, \wt\vecv_n, \wt\vecw_n ) \} 
	\right\|_{L_1} \\
	& \qquad + \left\|
	\sum_{j=1}^{n'} \EE [ \Delta K_j( \barr, \vecv_n, \vecw_n ) \Delta L_j( \carr, \wt\vecv_n, \wt\vecw_n ) | \calF_{n,m'}  ]  
	\right\|_{L_1} 
	+ \left\|
	\sum_{j=1}^{n'} \EE [ \Delta L_j( \barr, \vecv_n, \vecw_n ) \Delta K_j( \carr, \wt\vecv_n, \wt\vecw_n ) | \calF_{n,m'}  ]  
	\right\|_{L_1} \\
	& \qquad + \left\|
	\sum_{j=1}^{n'} \EE [ \Delta K_j( \barr, \vecv_n, \vecw_n ) \Delta K_j( \carr, \wt\vecv_n, \wt\vecw_n ) | \calF_{n,m'}  ]  
	\right\|_{L_1}. \\
	\end{align*}
	Let us now estimate the four terms separately.	For brevity of notation we omit the dependence on $, \vecv_n, \vecw_n, \wt\vecv_n, \wt\vecw_n $, as they are attached to $ \barr $ and $ \carr $, respectively, and enter only through the coefficients $ \wt{f}_{\ell,j}^{(n)} $. Using (\ref{FormulaDeltaLj}) we have
	\begin{align*}
	\Delta L_j( \barr ) \Delta L_j( \carr ) & = \wt{f}_{0,0}^{(n)}( \barr ) \wt{f}_{0,0}^{(n)}( \carr ) ( \epsilon_{n,m'+j}^2 - \sigma_{n,m'+j}^2 )^2 
	+ \wt{f}_{0,0}^{(n)}( \barr ) ( \epsilon_{n,m'+j}^2 - \sigma_{n,m'+j}^2 ) \epsilon_{n,m'}^2 \sum_{\ell=1}^{j-1} \wt{f}_{\ell,0}^{(n)}( \carr ) \epsilon_{n,m'+j-\ell} \\
	& +	 \wt{f}_{0,0}^{(n)}( \carr ) ( \epsilon_{n,m'+j}^2 - \sigma_{n,m'+j}^2 ) \epsilon_{n,m'}^2 \sum_{\ell=1}^{j-1} \wt{f}_{\ell,0}^{(n)}( \barr ) \epsilon_{n,m'+j-\ell} 
	+ \sum_{\ell, \ell'=1}^{j-1} \wt{f}_{\ell,0}^{(n)}( \barr ) \wt{f}_{\ell,0}^{(n)}( \carr ) \epsilon_{n,m'}^2 \epsilon_{n,m'+j-\ell}  \epsilon_{n,m'+j-\ell'}
	\end{align*}
	Noting that $ j \ge 1 $ and the sums over $ \ell, \ell' $ are non-vanishing only if $ j \ge 2 $, we obtain by independence
	of $ \{ \epsilon_{nk} : k \in \Z \} $ for $ j \ge 2 $ with the centered r.v.s. $  \bar\epsilon_{n,m'+j}^2 = \epsilon_{n,m'+j}^2 - \sigma_{n,m'+j}^2 $
	\begin{align*}
	& \EE \left[  \wt{f}_{0,0}^{(n)}( \barr ) \bar \epsilon_{n,m'+j}^2  \epsilon_{n,m'}^2 \sum_{\ell=1}^{j-1} \wt{f}_{\ell,0}^{(n)}( \carr ) \epsilon_{n,m'+j-\ell}   \bigr| \calF_{n,m'} \right]  %\\
	%	& \qquad = 
	= \wt{f}_{0,0}^{(n)}( \barr ) \epsilon_{nm'}^2 \EE(  \bar \epsilon_{n,m'+j}^2 ) \sum_{\ell=1}^{j-1}  \wt{f}_{\ell,0}^{(n)}( \carr )  \EE( \epsilon_{n,m'+j-\ell}  ) = 0,
	\end{align*}	
	a.s., since  $ \epsilon_{n,m'+j} $ and $ \epsilon_{n,m'+j-\ell}  $ are  independent if $ j \ge 2 $ and $ \ell \ge 1 $. Therefore
	\begin{align*}
	\EE[ \Delta L_j( \barr ) \Delta L_j( \carr )  \mid \calF_{n,m'}  ] 
	& = \wt{f}_{0,0}^{(n)}( \barr ) \wt{f}_{0,0}^{(n)}( \carr )( \gamma_{n,m'+j} - \sigma_{n,m'+j}^4)  
	+ \sum_{\ell=1}^{j-1} \wt{f}_{\ell,0}^{(n)}( \barr ) \wt{f}_{\ell,0}^{(n)}( \carr )  \sigma_{n,m'+j}^2 \sigma_{n,m'+j-\ell}^2,
	\end{align*}
	a.s.. Consequently, cf. (\ref{CovMartingale_shifted}), 
	\begin{align*}
	& \sum_{j=1}^{n'} \{ \EE [ \Delta L_j( \barr ) \Delta L_j( \carr ) | \calF_{n,m'}  ] - \beta_n^2( \barr, \carr ) \} \\
	& = \wt{f}_{0,0}^{(n)}( \barr ) \wt{f}_{0,0}^{(n)}( \carr ) \sum_{j=1}^{n'} ( \gamma_{n,m'+j} - \sigma_{n,m'+j}^4) 
	+ \sum_{j=1}^{n'}  \sigma_{n,m'+j}^2 \sum_{\ell=1}^{j-1} \wt{f}_{\ell,0}^{(n)}( \barr ) \wt{f}_{\ell,0}^{(n)}( \carr )  \sigma_{n,m'+j-\ell}^2 - (n') \beta_n^2( \barr, \carr ), \\
	& \ll K_n (n')^{1-\theta}.
	\end{align*}
	a.s.. A lengthy calculation shows that 
	$
	\Delta K_j( \aarr ) = \epsilon_{n,m'+j}  \sum_{\ell = j}^{\infty} \wt{f}_{\ell,0}^{(n)}( \aarr ) \epsilon_{n,m'+j-\ell},
	$
	such that, because  $ \epsilon_{n,m'+j-\ell} \epsilon_{n,m'+j-\ell'} $ is $ \calF_{n,m'} $-measurable if $ \ell, \ell' \ge j $ and $ j \ge 1 $,
	\[
	\EE[ \Delta K_j( \barr ) \Delta K_j( \carr)  \mid \calF_{n,m'} ] = \sigma_{n,m'+j}^2 
	\left( \sum_{\ell=j}^\infty \wt{f}_{\ell,0}^{(n)}( \barr ) \epsilon_{n,m'+j-\ell }  \right)
	\left( \sum_{\ell=j}^\infty \wt{f}_{\ell,0}^{(n)}( \carr ) \epsilon_{n,m'+j-\ell }  \right),
	\]
	such that the Cauchy-Schwarz inequality provides us with the bound
	\[
	\| \EE[ \Delta K_j( \barr ) \Delta K_j( \carr)  \mid \calF_{n,m'} ]  \|_{L_1} 
	\le \left( \sup_{n,k \ge 1} \sigma_{nk}^2 \right)^3 \prod_{\aarr \in \{ \barr, \carr \}} \sqrt{ \sum_{\ell=j}^\infty [ \wt{f}_{\ell,0}^{(n)}( \aarr ) ]^2 },
	\]
	a.s.. This implies
	\begin{align*}
	\left\| \sum_{j=1}^{n'}  \EE[ \Delta K_j( \barr ) \Delta K_j( \carr)  \mid \calF_{n,m'} ]  \right\|_{L_1} 
	& \le \sum_{j=1}^{n'} \|  \EE[ \Delta K_j( \barr ) \Delta K_j( \carr)  \mid \calF_{n,m'} ]  \|_{L_1}  \\
	& \stackrel{n,n'}{\ll} \sum_{j=1}^{n'}  \prod_{\aarr \in \{ \barr, \carr \}} \sqrt{ \sum_{\ell=j}^\infty [ \wt{f}_{\ell,0}^{(n)}( \aarr ) ]^2 }  
	\stackrel{n,n'}{\ll} K_n ( n' )^{1-\theta/2},
	\end{align*}
	since by virtue of the Jensen inequality and Lemma~\ref{LemmaFcoeffs}
	\[
	\left(  \frac{1}{n'} \sum_{j=1}^{n'}  \sqrt{ \prod_{\aarr \in \{ \barr, \carr \}} \sum_{\ell=j}^\infty [ \wt{f}_{\ell,0}^{(n)}( \aarr ) ]^2  }  \right)^2 
	\le
	\frac{1}{n'} \sum_{j=1}^{n'} \prod_{\aarr \in \{ \barr, \carr \}} \sum_{\ell=j}^\infty [ \wt{f}_{\ell,0}^{(n)}( \aarr ) ]^2.
	\]
	Further,
	\[
	\EE[ \Delta L_j( \barr ) \Delta K_j( \barr ) \mid \calF_{n,m'}  ] 
	= \wt{f}_{0,0}^{(n)}( \barr) E( \epsilon_{nk}^3) \sum_{\ell=j}^\infty \wt{f}_{\ell,0}^{(n)}( \carr ) \epsilon_{n,m'+j-\ell}
	\]
	leading to the estimate
	\[
	\| \EE[ \Delta L_j( \barr ) \Delta K_j( \carr ) \mid \calF_{n,m'}  ]  \|_{L_1}
	\le \sup_{n,k} \EE | \epsilon_{nk} |^3 \sup_{n,k} \EE( \epsilon_{nk}^2) \wt{f}_{\ell,0}^{(n)}( \barr )\sqrt{ \sum_{\ell=j}^\infty [ \wt{f}_{\ell,0}^{(n)}( \carr ) ]^2 }
	\stackrel{n,n'}{\ll} K_n (n')^{1-\theta/2}.
	\]
	Lastly, a direct calculation using similar arguments as above shows that 
	\begin{align*}
	\EE[ \Delta_j( \barr ) \Delta L_j( \carr ) | \calF_{n,m'}  ] & = \wt{f}_{0,0}^{(n)}( \barr ) \wt{f}_{0,0}^{(n)}( \carr ) [ \EE( \epsilon_{n,m'+j}^4 ) - \sigma_{n,m'+j-\ell}^4 ] +
	\sum_{\ell =1}^{j-1} \wt{f}_{\ell,0}^{(n)}( \barr ) \wt{f}_{\ell,0}^{(n)}( \carr ) \sigma_{n,m'+j}^2 \sigma_{n,m'+j-\ell}^2,
	\end{align*}
	which is the $j$th term of the first sum of $ (n') \beta_{n,n',m'}^2( \barr) $ as defined in the proof of Lemma~1. %\ref{CorollaryBetaSq}. 
	Since there it was shown that $ | \beta_{n,n',m'}^2( \barr) - \beta_n^2(\barr) | \stackrel{n,n'}{\ll}  \| \vecv_n \|_{\ell_1}^2 \| \vecw_n \|_{\ell_1}^2 (n')^{-\theta} $, we eventually obtain
	\[
	\sum_{j=1}^{n'} \EE[ \Delta_j( \barr ) \Delta L_j( \carr ) | \calF_{n,m'}  ] - ( n')  \beta_n^2( \barr, \carr ) 
	\ll  K_n (n')^{1-\theta/2}.
	\]
	Putting together the above estimates completes the proof of the first assertion. The second assertion is shown as in 
	\cite[(4.23)]{Kouritzin1995} and is omitted for brevity.
\end{proof}

\subsection{Proofs of Subsection~7.3}

\begin{lemma} 
	\label{MeansofCusums}
	Under the change-point model (9) % (\ref{CPM}) 
	it holds
	\begin{equation}
	\label{Mean1}
	\EE \left( \matS_{nk} - \frac{k}{n} \matS_{nn} \right)
	= \left\{
	\begin{array}{cc}
	\frac{k(n-\tau)}{n}( \bfSigma_{n0} - \bfSigma_{n1} ), & \qquad k \le \tau, \\
	\tau \frac{n-k}{n} ( \bfSigma_{n0} - \bfSigma_{n1} ), & \qquad k > \tau.
	\end{array}
	\right.
	\end{equation}
	and
	\begin{equation}
	\label{Mean2}
	m_n(k) := \EE \left( U_{nk} - \frac{k}{n} U_{nn} \right)
	= \left\{
	\begin{array}{cc}
	\frac{k(n-\tau)}{n} \Delta_n, & \qquad k \le \tau, \\
	\tau \frac{n-k}{n} \Delta_n, & \qquad k > \tau.
	\end{array}
	\right.
	\end{equation}
\end{lemma}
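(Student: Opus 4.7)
The proof is a direct computation from the change-point model, so the plan is essentially to keep track of the two regimes and simplify. First, I would split the defining sum $\matS_{nk} = \sum_{i \le k} \vecY_{ni}\vecY_{ni}^\top$ according to whether $i \le \tau$ or $i > \tau$, using model (9) to identify $\vecY_{ni}$ with $\vecY_{ni}(\barr)$ in the first regime and $\vecY_{ni}(\carr)$ in the second. Since $\EE[\vecY_{ni}(\barr)\vecY_{ni}(\barr)^\top] = \bfSigma_{n0}$ for all $i$ and likewise $\EE[\vecY_{ni}(\carr)\vecY_{ni}(\carr)^\top] = \bfSigma_{n1}$, this yields $\EE(\matS_{nk}) = k\bfSigma_{n0}$ for $k \le \tau$ and $\EE(\matS_{nk}) = \tau\bfSigma_{n0} + (k-\tau)\bfSigma_{n1}$ for $k > \tau$. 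In particular, $\EE(\matS_{nn}) = \tau\bfSigma_{n0} + (n-\tau)\bfSigma_{n1}$.

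Next I would plug these into $\EE(\matS_{nk} - (k/n)\matS_{nn})$ and simplify in each regime. For $k \le \tau$ one obtains
\[
k\bfSigma_{n0} - \tfrac{k}{n}\bigl[\tau\bfSigma_{n0} + (n-\tau)\bfSigma_{n1}\bigr] = \tfrac{k(n-\tau)}{n}(\bfSigma_{n0} - \bfSigma_{n1}),
\]
and for $k > \tau$ the coefficients of $\bfSigma_{n0}$ and $\bfSigma_{n1}$ collapse to $\pm \tau(n-k)/n$, producing $\tfrac{\tau(n-k)}{n}(\bfSigma_{n0} - \bfSigma_{n1})$. This establishes (\ref{Mean1}).

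The second identity (\ref{Mean2}) then follows by applying the bilinear form $\vecv_n^\top(\cdot)\vecw_n$ to (\ref{Mean1}), using linearity of expectation and the definition $\Delta_n = \vecv_n^\top(\bfSigma_{n0} - \bfSigma_{n1})\vecw_n$ from (\ref{ThereisAChangeCondition}). There is no real obstacle here; the entire argument is direct bookkeeping on the two regimes $k \le \tau$ and $k > \tau$, with the only thing to verify being that the algebraic cancellation produces the factor $\tau(n-k)/n$ in the second regime, which it does exactly because $(k-\tau) - k(n-\tau)/n = -\tau(n-k)/n$.
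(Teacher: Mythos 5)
Your proof is correct and follows essentially the same route as the paper's: split $\EE(\matS_{nk})$ by regime using $\EE(\vecY_{ni}\vecY_{ni}^\top)=\bfSigma_{n0}\eins(i\le\tau)+\bfSigma_{n1}\eins(i>\tau)$, simplify the two cases, and obtain (\ref{Mean2}) by applying $\vecv_n^\top(\cdot)\vecw_n$ and linearity. The algebraic cancellation you single out, $(k-\tau)-k(n-\tau)/n=-\tau(n-k)/n$, is exactly the step the paper's computation rests on, so nothing is missing.
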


\begin{proof}[{\bf Proof of  Lemma~\ref{MeansofCusums}}]
	Since $ \EE( \vecY_{ni} \vecY_{ni}^\top ) = \bfSigma_{n0} \eins( i \le \tau ) + \bfSigma_{n1} \eins( i > \tau ) $, we have 
	$
	\EE\left( \frac{1}{n} \matS_{nn} \right) = \EE\left( \frac{1}{n} \sum_{i=1}^n \vecY_{ni} \vecY_{ni}^\top \right) = \frac{\tau}{n} \bfSigma_{n0} + \frac{n-\tau}{n} \bfSigma_{n1}.
	$
	Therefore, for $ k \le \tau $ 
	%	\begin{align*}
	$	\EE\left( \matS_{nk} - \frac{k}{n} \matS_{nn} \right) = 
	k \bfSigma_{n0} - k\left( \frac{\tau}{n} \bfSigma_{n0} + \frac{n-\tau}{n} \bfSigma_{n1}  \right) 
	= \frac{k(n-\tau)}{n}(\bfSigma_{n0} - \bfSigma_{n1}), $
	%	\end{align*}
	whereas for $ k > \tau $ 
	%	\begin{align*}
	$	\EE\left( \matS_{nk} - \frac{k}{n} \matS_{nn} \right) = 
	\tau \bfSigma_{n0} + (\tau-k) \bfSigma_{n1} - k\left( \frac{\tau}{n} \bfSigma_{n0} + \frac{n-\tau}{n} 	\bfSigma_{n1}  \right) 
	= \tau \frac{n-k}{n}(\bfSigma_{n0} - \bfSigma_{n1}).$
	%	\end{align*}
	This verifies (\ref{Mean1}). Recalling that $ U_{nk} = \vecv_n^\top \matS_{nk} \vecw_n $ and
	$ \Delta_n = \vecv_n^\top\bfSigma_{n0} \vecw_n - \vecv_n^\top\bfSigma_{n1} \vecw_n $, 
	(\ref{Mean2}) follows by linearity.
\end{proof}

\begin{proof}[{\bf Proof of Theorem~2}]
	Observe that
	$
	\max_{k \le n} \left| U_{nk} - \frac{k}{n} U_{nn} \right| 
	= \max_{k \le n} \frac{1}{\sqrt{n}} \left|  D_{nk} - \frac{k}{n}  D_{nn} + m_n(k) \right|.
	$
	In  Theorem~\ref{Mart_for_Dnk} it is shown that $ \wt{D}_{nk} $, defined in (\ref{DefDTilde}), is a martingale which approximates $ D_{nk} $, since $ \EE( \wt{D}_{nk} - D_{nk} )^2 \stackrel{n,k}{\ll} n^{1-\theta} $,
	so that  $ \EE( \wt{D}_{nk} )^2 \stackrel{n,k}{\ll} k $ as well as $ \EE( \wt{D}_{nk}  - \frac{k}{n} \wt{D}_{nn} )^2 \stackrel{n,k}{\ll} n $ hold. Using this fact, (\ref{Dnk_Mapprox2}) and the triangle inequality, we obtain for any constant $ C > 0 $ and $ k \le n $
	\begin{align*}
	\PP\left( \frac{1}{\sqrt{n}} \left|  D_{nk} - \frac{k}{n} D_{nn} \right| > C \right)
	& \le \PP \left( \left|  \wt{D}_{nk} - \frac{k}{n} \wt{D}_{nn} \right| > \frac{C \sqrt{n} }{2}  \right) 
	+ \PP \left( \left| D_{nk} - \frac{k}{n} D_{nn} - [ \wt{D}_{nk} - \frac{k}{n} \wt{D}_{nn} ]  \right| >  \frac{C\sqrt{n}}{2}  \right) \\
	& \stackrel{n}{\ll} \frac{ \EE\left( \wt{D}_{nk} - \frac{k}{n} \wt{D}_{nn} \right)^2 }{ C n } + \frac{ n^{1-\theta} }{ C n  },
	\end{align*}
	which entails $ \frac{1}{\sqrt{n}} \left|  D_{nk} - \frac{k}{n} D_{nn} \right| = O_\PP(1) $.  W.l.o.g. assume $ \Delta_n > 0 $ for large $n$ and observe that by (11) %(\ref{ThereisAChangeCondition})  
	it holds
	$
	m_n( \tau + 1 ) =\frac{ \trunc{n \vartheta} }{ \sqrt{n} } \frac{n - \trunc{n  \vartheta} - 1 }{n} \Delta_n
	\to +\infty, 
	$
	as $ n \to \infty $. Consequently, we have 
	\begin{align*}
	\max_{k \le n} \frac{1}{\sqrt{n}} \left| U_{nk} - \frac{k}{n} U_{nn} \right| 
	& \ge \frac{1}{\sqrt{n}} \left| D_{n,\tau+1} - \frac{\tau+1}{n} D_{nn} + m_n(\tau+1)  \right| \\
	& \ge \frac{1}{\sqrt{n}} \left|  | D_{n,\tau+1} - \frac{\tau+1}{n} D_{nn} | - | m_n( \tau+1) | \right| 
	\underset{n \to \infty}{\overset{\PP}{\to}} +\infty.
	\end{align*}
\end{proof}

\begin{proof}[{\bf Proof of Theorem~3}]
	Observe that $ \Cov( Y_{ni}^{(\nu)}(\barr), Y_{ni'}^{(\mu)}( \carr )  ) = \sum_{\ell=0}^\infty b_{n\ell}^{(\nu)} c_{n, i'-i+\ell}^{(\mu)} \sigma_{i-j}^2 $ for $ i \le i' $ and arbitrary coefficient arrays $ \barr, \carr $. By the strengthened decay condition, we have
	$ \EE( \vecv_n^\top \vecY_{ni}(\aarr) )^2 = O(1) $ uniformly in $i$,  
	$ \EE\left( n^{-1/2} \sum_{i=1}^k \vecv_n^\top \vecY_{ni}(\aarr) \right)^2 = O(1) $ and
	\[
	\EE( \vecv_n^\top \overline{\vecY}_{n}(\aarr) )^2 = O\left( \| \vecv_n \|_{\ell_1}^2 \frac{1}{n^2} \sum_{i,j=1}^n \Cov(Y_{ni}^{(\nu)}(\aarr), Y_{nj}^{(\nu)}(\aarr)  )  \right) \stackrel{n}{\ll} n^{-1},
	\]
	for $ \aarr \in \{ \barr, \carr \} $. It follows that 
	\begin{align*}
	\EE \left| \vecw_n^\top \overline{\vecY}_n \sum_{i \le k} \vecv_n^\top \vecY_{ni}( \aarr ) \right| 
	& = \EE \left| n^{-1/2} \sum_{i \le k} \vecv_n^\top \vecY_{ni} n^{-1/2} \sum_{i \le k} \vecw_n^\top \vecY_{ni} \right|  = O(1) 
	\end{align*}
	and therefore 
	\[ R_{ni}( \aarr ) = - \vecw_n^\top \overline{\vecY}_{n}( \aarr ) \vecv_n^\top \vecY_{ni}(\aarr)  - \vecv_n^\top \overline{\vecY}_{n}( \aarr )  \vecw_n^\top \vecY_{ni}(\aarr)  + \vecv_n^\top \overline{\vecY}_{n}(\aarr)  \vecw_n^\top \overline{\vecY}_{n}(\aarr) 
	\]
	satisfies $ \sup_{i \ge 1} \EE| R_{ni}( \aarr ) | \ll n^{-1/2} $ and $ \EE \left| \sum_{i \le k} R_{ni}( \aarr ) \right| = O(k/n) $. 
	Put
	\[
	\wt\bfxi_{ni} = \wt\bfxi_{ni}( \avb, \avc ) = \left( \begin{array}{cc} \wt Y_{ni}( \avb ) \wt Y_{ni}( \awb ) - \EE[ Y_{ni}( \avb ) Y_{ni}( \awb ) ]\\ \wt Y_{ni}( \avc ) \wt Y_{ni}( \awc ) - \EE[  Y_{ni}( \avc ) Y_{ni}( \awc ) ]\end{array}  \right),
	\]
	where $ \wt{Y}_{ni}(\cdot) = Y_{ni}(\cdot) - \overline{Y}_{n}( \cdot ) $. 
	Since  $ \wt\bfxi_{ni} = \bfxi_{ni} + \vecR_{ni} $, $ \vecR_{ni} = ( R_{n}(\barr), R_{n}(\carr) )^\top $,  we have for $ k \le n $ the estimates $ \EE \| \bfxi_{ni} - \wt\bfxi_{ni} \|_\infty = \EE \| \vecR_{ni} \|_\infty \ll n^{-1/2} $ uniformly in $ i $. Consider the decomposition
	$ 	\wt\vecD_{nk} = \vecD_{nk} + \vecR_n $ if $ \vecR_n = \sum_{i \le k} \vecR_{ni} $.
	By Markov's inequality
	$ \PP( | \sum_{i \le k} R_{ni}( \aarr ) | > \delta n^{\lambda'} ) \le \delta^{-1} n^{-\lambda'} \EE | \sum_{i \le k} R_{ni}( \aarr ) | = O(  n^{-\lambda'}  ) $, $ \delta, \lambda' > 0$, such that
	$ \vecR_n  = o_{\PP}( n^{\lambda'} ) $ for any $ \lambda'>0$. It follows that 
	\[ 
	\wt\vecD_{nk} = \vecD_{nk} + \vecR_n, \qquad \| \vecR_n \|_\infty = o_{\PP}( n^{1/2} ).  
	\]
	In view of (55) %(\ref{StrongApproxBivariate}), 
	we may conclude that, on a new probability space for equivalent versions, $ \| \vecD_{nt} - \vecB_n(t) \|_2 \le C_n t^{1/2-\lambda} $, $ t > 0 $, a.s.. By virtue of \cite[Sec.~21, Lemma~2]{Billingsley1999} this strong approximation can be constructed on the original probability space $ (\Omega, \calA, \PP) $. Consequently, we obtain $ \| \wt\vecD_{nt} - \vecB_n(t) \|_2 \le C_n t^{1/2-\lambda} + o_{\PP}( n^{1/2} ) $, $ t > 0 $, a.s.. Now it follows easily that  assertions (i) and (ii) of Theorem~1 %Theorem~\ref{BasicStrongApprox} 
	hold true with an additional error term $  o_{\PP}( n^{1/2} )  $ and (iii)-(vi) with an additional $ o_{\PP}(1) $ term. Finally, (vii) and (viii) hold in probability, if $ C_n n^{-\lambda} = o(1) $.
\end{proof}

\subsection{Proofs of Subsection~7.5}

\begin{theorem}
	\label{Mart_for_Dnk} Under the change-point alternative model (9) %(\ref{CPM}) 
	with $ \tau = \lfloor n \vartheta \rfloor $, $ \vartheta \in (0, 1) $, there exist a $ \calF_{nk} $-martingale array $ \wt{D}_{nk} $, $ 1 \le k $, $ n \ge 1 $, such that
	\begin{equation}
	\label{Dnk_Mapprox1}
	\EE( D_{nk} - \wt{D}_{nk} )^2 \stackrel{n,k}{\ll} k^{1-\theta}.
	\end{equation}
	and hence for  $ k \le n $ and $ n \ge 1 $
	\begin{equation}
	\label{Dnk_Mapprox2}
	\EE\left( D_{nk} - \frac{k}{n} D_{nn} - [\wt{D}_{nk} - \frac{k}{n} \wt{D}_{nn} ] \right)^2 \stackrel{n,k}{\ll} k^{1-\theta}.
	\end{equation}
	Further, if $ 0 < \beta < 1/2 $, then
	\begin{equation}
	\label{Dnk_MapproxWeighted}
	\EE \left(  \left( \frac{n}{k} \right)^\beta \left|  D_{nk} - \frac{k}{n} D_{nn} - [ \wt{D}_{nk} - \frac{k}{n} \wt{D}_{nn} ] \right| \right)^2 \stackrel{n,k}{\ll} n^{1-\theta}.
	\end{equation}
\end{theorem}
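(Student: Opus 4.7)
The plan is to splice the martingale approximations supplied by Lemma~2 across the change-point. Define the candidate martingale by
\[
\wt{D}_{nk} = M_k^{(n)}(\barr,\vecv_n,\vecw_n)\, \eins(k \le \tau) + \bigl[ M_\tau^{(n)}(\barr,\vecv_n,\vecw_n) + M_k^{(n)}(\carr,\vecv_n,\vecw_n) - M_\tau^{(n)}(\carr,\vecv_n,\vecw_n) \bigr] \eins(k > \tau).
\]
The first step is to check that $\wt{D}_{nk}$ is an $\calF_{nk}$-martingale. Its one-step increment equals $\Delta M_k^{(n)}(\barr,\vecv_n,\vecw_n)$ for $k \le \tau$ and $\Delta M_k^{(n)}(\carr,\vecv_n,\vecw_n)$ for $k > \tau$, and both are $\calF_{nk}$-martingale differences by construction of the underlying martingales $M_k^{(n)}(\cdot)$; in particular the splice at $k=\tau$ introduces no extra term because $\delta M_\tau^{(n)}(\barr) = M_\tau^{(n)}(\barr)$.

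To prove (\ref{Dnk_Mapprox1}), I would use that the change-point model gives $D_{nk} = S_{k,0}^{(n)}(\barr,\vecv_n,\vecw_n)$ when $k \le \tau$ and $D_{nk} = S_{\tau,0}^{(n)}(\barr,\vecv_n,\vecw_n) + S_{k-\tau,\tau}^{(n)}(\carr,\vecv_n,\vecw_n)$ when $k > \tau$. For $k \le \tau$ a single application of Lemma~2 at $m'=0,\,n'=k$ gives the desired $k^{1-\theta}$ bound directly. For $k > \tau$ the error splits as
\[
D_{nk} - \wt{D}_{nk} = \bigl[S_{\tau,0}^{(n)}(\barr) - \delta M_\tau^{(n)}(\barr)\bigr] + \bigl[S_{k-\tau,\tau}^{(n)}(\carr) - \delta M_k^{(n)}(\carr)\bigr],
\]
and two applications of Lemma~2 together with $(a+b)^2 \le 2a^2+2b^2$ yield $\EE(D_{nk}-\wt{D}_{nk})^2 \ll \tau^{1-\theta} + (k-\tau)^{1-\theta} \ll k^{1-\theta}$.

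For (\ref{Dnk_Mapprox2}), set $R_{nk} = D_{nk} - \wt{D}_{nk}$ so that the target equals $R_{nk} - (k/n) R_{nn}$. A triangle-inequality estimate and (\ref{Dnk_Mapprox1}) give
\[
\EE(R_{nk} - (k/n) R_{nn})^2 \le 2 \EE R_{nk}^2 + 2(k/n)^2 \EE R_{nn}^2 \stackrel{n,k}{\ll} k^{1-\theta} + (k/n)^2 n^{1-\theta}.
\]
Since $k \le n$ implies $n^{-\theta} \le k^{-\theta}$, one has $(k/n)^2 n^{1-\theta} = k \cdot (k/n) \cdot n^{-\theta} \le k \cdot k^{-\theta} = k^{1-\theta}$, which closes this part.

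Finally, for (\ref{Dnk_MapproxWeighted}) feeding the bound from (\ref{Dnk_Mapprox2}) into the weight gives $(n/k)^{2\beta}\, k^{1-\theta} = n^{2\beta} k^{1-\theta-2\beta}$. When $1-\theta-2\beta \ge 0$, this is non-decreasing in $k$ and bounded above by $n^{1-\theta}$ for $k\le n$, finishing the argument for this regime. The main obstacle is the remaining range $1-\theta-2\beta < 0$, where the inflation by $(n/k)^{2\beta}$ at small $k$ overwhelms the pointwise $L^2$ bound and (\ref{Dnk_Mapprox2}) alone is insufficient. Here I would revisit the explicit decomposition $R = O + P + Q$ used in the proof of Lemma~2 and estimate $R_{nk} - (k/n) R_{nn}$ term by term, exploiting that the centering produces an additional cancellation in the truncation tails (in $O$ and $P$) and in the innovation cross-product terms (in $Q$), yielding an extra factor of $k/n$ relative to the crude bound; this is the step I expect to be the most technical.
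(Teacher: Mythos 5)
Your construction of $\wt{D}_{nk}$, the martingale check across the splice at $k=\tau+1$, and the proofs of (\ref{Dnk_Mapprox1}) and (\ref{Dnk_Mapprox2}) coincide with the paper's argument; your two-term splitting for $k>\tau$ (one application of Lemma~\ref{LemmaMartingaleApprox} at $m'=0,n'=\tau$ and one at $m'=\tau,n'=k-\tau$, using that the bound in (\ref{MartingaleApprox1}) is uniform in $m'$) is if anything slightly cleaner than the paper's four-term triangle inequality, and the algebra $(k/n)^2 n^{1-\theta}\le k^{1-\theta}$ is exactly the paper's.

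The one place you go astray is (\ref{Dnk_MapproxWeighted}) in the regime $1-\theta-2\beta<0$. You correctly observe that the pointwise bound $n^{2\beta}k^{1-\theta-2\beta}$ blows up at small $k$ there, but the remedy is not a term-by-term re-examination of the $O+P+Q$ decomposition -- it is a one-line reduction, and it is what the paper does: Assumption (D) with exponent $\theta$ implies Assumption (D) with any smaller $\theta'\in(0,\theta)$ (the coefficient bound only weakens), and the asserted conclusion $\stackrel{n,k}{\ll}n^{1-\theta'}$ is likewise weaker, so one may assume without loss of generality that $\theta+2\beta<1$ from the outset (since $\beta<1/2$ such a $\theta'$ always exists). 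With that reduction your own computation for the ``easy'' regime finishes the proof, and the heavy technical work you anticipate is unnecessary. Do note, as you implicitly did, that this means the exponent $\theta$ appearing in (\ref{Dnk_MapproxWeighted}) is really ``some $\theta'>0$ depending on $\beta$'' rather than the original $\theta$ when $\theta+2\beta\ge1$; this costs nothing in the applications, where only a polynomial gain over $n$ is needed.
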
 

\begin{proof}[{\bf Proof of  Theorem~\ref{Mart_for_Dnk}}]
	Recall (\ref{Martingale_m'0}) and put for each $n \ge 1 $
	\begin{equation}
	\label{DefDTilde}
	\wt{D}_{nk} = \delta M_k^{(n)}( \barr)  \eins( k \le \tau ) + [ \delta M_\tau^{(n)}( \barr ) + \delta M_k^{(n)}( \carr ) - \delta M_\tau^{(n)}( \carr )  ] \eins( k > \tau ), \qquad k \ge 1.
	\end{equation}
	It is clear that $ \EE( \wt{D}_{nk} | \calF_{n,k-1} ) = 0 $ holds if $ k \le \tau $ and $ k > \tau+1 $. In addition, for $ k = \tau + 1 $ we have
	\begin{align*}
	\EE[ \wt{D}_{nk} - \wt{D}_{n,k-1} | \calF_{n,k-1}  ] 
	&= \EE[ \delta M_\tau^{(n)}( \barr ) + \delta M_{\tau+1}^{(n)}( \carr ) - \delta M_\tau^{(n)}( \carr ) - \delta M_\tau^{(n)}( \barr ) \mid \calF_{n,\tau} ] \\
	& = \EE[ M_{n,\tau+1}^{(n)}( \carr ) - M_{n,\tau}^{(n)}( \carr ) \mid \calF_{n,\tau} ] = 0,
	\end{align*}
	because $ M_{nk}^{(n)}( \carr ) $ is a $ \calF_{nk} $-martingale array.
	Since 
	\[
	D_{nk} = \vecD_{nk}^{(1)} \eins( k \le \tau ) + [ \vecD_{n\tau}^{(1)} + \vecD_{nk}^{(2)} - \vecD_{n\tau}^{(2)} ] \eins(k > \tau  ),
	\]
	the triangle inequality provides the upper bound
	\[
	\| \vecD_{nk}^{(1)} - \delta M_k^{(n)}(\barr)  \|_{L_2} 
	+ \| \vecD_{n\tau}^{(1)} - \delta M_\tau^{(n)}( \barr )   \|_{L_2}	
	+ \| \vecD_{nk}^{(2)} - \delta M_k^{(n)}( \carr ) \|_{L_2}  + \| \vecD_{n\tau}^{(2)} - M_\tau^{(n)}( \carr ) \|_{L_2} 
	\]
	for $ \| D_{nk} - \wt{D}_{nk} \|_{L_2} $, which is $ \ll k^{1/2-\theta/2} $ by virtue of
	Lemma~2, see (49) and (50),
	% Lemma~\ref{LemmaMartingaleApprox}, see (\ref{MartingaleApproxD1}) and (\ref{MartingaleApproxD2}). 
	This verifies (\ref{Dnk_Mapprox1}), i.e.,
	\[
	\| D_{nk} - \wt{D}_{nk} \|_{L_2} \stackrel{n,k}{\ll} k^{1/2-\theta/2}.
	\]
	As a consequence, for $ k \le n $ and $ n \ge 1 $
	\begin{align*}
	& \left\|  D_{nk} - \frac{k}{n} D_{nn} - [ \wt{D}_{nk} - \frac{k}{n} \wt{D}_{nn} ] \right\|_{L_2} 
	\stackrel{k,n}{\ll} \| D_{nk} - \wt{D}_{nk} \|_{L_2} + \frac{k}{n} \| D_{nn} - \wt{D}_{nn} \|_{L_2} 
	\stackrel{k,n}{\ll} k^{1/2-\theta/2}.
	\end{align*}
	Lastly, since we may assume that $ \theta $ is small enough to ensure $ \theta + 2 \beta < 1 $, it holds for $ 1 \le k \le n $
	\begin{equation}
	\label{WeightedApproxBound}
	\EE \left(  \left( \frac{n}{k} \right)^\beta \left|  D_{nk} - \frac{k}{n} D_{nn} - [ \wt{D}_{nk} - \frac{k}{n} \wt{D}_{nn} ] \right| \right)^2 \stackrel{n,k}{\ll} n^{2 \beta} k^{-2\beta} k^{1-\theta} \le n^{1-\theta},
	\end{equation}
	%	which completes the proof.
\end{proof}

\begin{proof}[{\bf Proof of Theorem~7}]
	The proof is completed by considering the decomposition
	\begin{align*}
	\sum_{|h| \le m_n} w_{mh} \wt{\Gamma}_n(u;h,d) - \alpha^2(u, \barr, \carr )
	& = \sum_{|h| \le m_n} w_{mh} \wt{\Gamma}_n(u;h,d) - \sum_{h \in \Z} \Gamma(u;h,d) + o(1) \\
	& = A_n(u;d) + B_n(u;d) + C_n(u;d) + D_n(u;d) + o(1),
	\end{align*}
	where  the $o(1)$ term is uniform over $ d \in \N $  and
	\begin{align*}
	A_n(u;d) & = \sum_{|h| \le m_n} w_{mh} [ \wt{\Gamma}_n(u;h,d) - \EE(\wt{\Gamma}_n(u;h,d) ) ],  \quad 
	B_n(u;d)  = \sum_{|h| \le m_n} w_{mh} [ \EE(\wt{\Gamma}_n(u;h,d)) - \Gamma(u;h,d) ], \\
	C_n(u;d) & = \sum_{|h| \le m_n} [w_{mh} - 1 ] \Gamma(u;h,d),  \quad
	D_n(u;d)  = -\sum_{|h| > m_n} \Gamma(u;h,d).
	\end{align*}
	$ B_n(u;d) $ has been already estimated in (60) and (62) %(\ref{EstimationBn}) and (\ref{ConsistencySeriesConverges}) 
	implies $ \sup_{u \in [\varepsilon,1]} \sup_{d \in \N} | D_n(u;h,d) | = o(1) $, as $ n \to \infty $.  Denote the counting measure on $ \Z $ by $ d \nu $. Then by Fubini and (65) % (\ref{UnifConvGammaTilde})
	\begin{align*}
	\sup_{u \in [\varepsilon,1]} \sup_{d \in \N} \EE| A_n(u;d) | 
	& \le  \sup_{u \in [\varepsilon,1]} \sup_{d \in \N} \int_{\Z} w_{mh} \EE | \wt{\Gamma}_n(u;h,d) - \EE\wt{\Gamma}_n(u;h,d) | \eins( |h| \le m_n) \, d\nu(h) \\
	& \le 2W m_n \sup_{u \in [\varepsilon,1]} \sup_{d \in \N} \max_{|h|\le m_n} \EE | \wt{\Gamma}_n(u;h,d) - \EE\wt{\Gamma}_n(u;h,d) | = o(1).
	\end{align*}
	Lastly, $ \sup_{u \in [\varepsilon,1]} \sup_{d \in \N} | C_n(u;h,d) | = o(1) $, as $ n \to \infty $, follows by dominated convergence. 
\end{proof}

\section*{Acknowledgments}

The author acknowledges support from Deutsche Forschungsgemeinschaft (grants STE 1034/11-1,  1034/11-2).

%\section*{References}

\bibliographystyle{apalike}
\bibliography{lit}

\end{document}